\newcommand\ScaleExists[1]{\vcenter{\hbox{\scalefont{#1}$\exists$}}}
\newcommand\ScaleForall[1]{\vcenter{\hbox{\scalefont{#1}$\forall$}}}
\DeclareMathOperator*\bigexists{%
  \vphantom\sum
  \mathchoice{\ScaleExists{1.8}}{\ScaleExists{1.2}}{\ScaleExists{0.9}}{\ScaleExists{0.75}}}
\DeclareMathOperator*\bigforall{%
  \vphantom\sum
  \mathchoice{\ScaleForall{1.8}}{\ScaleForall{1.2}}{\ScaleForall{0.9}}{\ScaleForall{0.75}}}
\begin{document}

\newcommand\hide[1]{\commented{gray}{Hidden:}{#1}}
\renewcommand\hide[1]\empty

\theoremstyle{plain}
  \newtheorem{thm}{Theorem}
  \newtheorem{prop}{Proposition}
  \newtheorem{lmm}{Lemma}
  \newtheorem{coro}{Corollary}
  \newtheorem*{tm}{Theorem}
  \newtheorem*{cor}{Corollary}
  \newtheorem*{lm}{Lemma}
  \newtheorem*{prp}{Proposition}

\theoremstyle{definition}
  \newtheorem*{df}{Definition}
  \newtheorem{prb}{Problem}
  \newtheorem{q}{Question}
  \newtheorem{rmk}{Remark}
  \newtheorem{rmks}{Remarks}
  \newtheorem*{rfr}{References}

  \newtheorem*{usaxioms}{Usual Axioms}
  \newtheorem*{conjaxiom}{The $\bigwedge$-Axiom}
  \newtheorem*{usrules}{Usual Rules}
  \newtheorem*{conjrule}{The $\bigwedge$-Rule}
  \newtheorem*{forallrule}{The $\bigforall$-Rule}
  \newtheorem*{S-rule}{The $S$-Rule}
  \newtheorem*{A-rule}{The $\mathcal A$-Rule}
  \newtheorem*{Theta-rule}{The $\Theta$-Rule}
  \newtheorem*{inflax}{The Inflation Axiom}
  \newtheorem*{inflrule}{The Inflation Rule}
  \newtheorem*{consax}{The $\Con$-Axiom}
  \newtheorem*{consrule}{The $\Con$-Rule}
  \newtheorem*{rfrule}{The Reflection Rule}

\theoremstyle{remark}
  \newtheorem*{exm}{Example}
  \newtheorem*{exms}{Examples}
  \newtheorem*{fct}{Fact}
  \newtheorem*{fcts}{Facts}
  \newtheorem*{atomic}{Atomic formulas}
  \newtheorem*{connections}{Connections}
  \newtheorem*{quantifiers}{Quantifiers}
  \newtheorem*{case1}{\;\;\;\;\;Case 1}
  \newtheorem*{case2}{\;\;\;\;\;Case 2}
  \newtheorem*{negation}{Negation}
  \newtheorem*{conjunction}{Conjunction}
  \newtheorem*{universal quantifier}{Universal quantifier}
  \newtheorem*{iif}{If}
  \newtheorem*{onlyif}{Only if}

\renewcommand\labelenumi{\textnormal{(\roman{enumi})}}
\renewcommand\labelenumii{\textnormal{(\alph{enumii})}}

\newcommand{\lra}{\leftrightarrow}

\newcommand{\image}{\/``\,}
\newcommand{\cf}{ {\mathop{\mathrm {cf\,}}\nolimits} }
\newcommand{\tc}{ {\mathop{\mathrm {tc\,}}\nolimits} }
\newcommand{\dom}{ {\mathop{\mathrm {dom\,}}\nolimits} }
\newcommand{\ran}{ {\mathop{\mathrm {ran\,}}\nolimits} }
\newcommand{\Con}{ {\mathop{\mathrm {Con\,}}\nolimits} }
\newcommand{\rdc}{ {\mathop{\mathrm {rd\,}}\nolimits} }
\newcommand{\prv}{ {\mathop{\mathrm {Prv\,}}\nolimits} }
\newcommand{\pr}{ {\mathop{\mathrm {Pr\,}}\nolimits} }
\newcommand{\fv}{ {\mathop{\mathrm {fv\,}}\nolimits} }
\newcommand{\cmpl}{ {\mathop{\mathrm {cmpl\,}}\nolimits} }
\newcommand{\pd}{ {\mathop{\mathrm {pd\,}}\nolimits} } 

\newcommand{\ulc}{\ulcorner} 
\newcommand{\urc}{\urcorner}

\newcommand{\PA}{ {\mathrm {PA}} }
\newcommand{\TA}{ {\mathrm {TA}} }
\newcommand{\KP}{ {\mathrm {KP}} }
\newcommand{\Z}{ {\mathrm Z} }
\newcommand{\ZF}{ {\mathrm {ZF}} }
\newcommand{\ZFA}{ {\mathrm {ZFA}} }
\newcommand{\ZFC}{ {\mathrm {ZFC}} }
\newcommand{\NBG}{ {\mathrm {NBG}} }

\newcommand{\E}{ {\mathrm {AE}} }
\newcommand{\AR}{ {\mathrm {AR}} }
\newcommand{\WR}{ {\mathrm {WR}} }
\newcommand{\AF}{ {\mathrm {AF}} }
\newcommand{\AC}{ {\mathrm {AC}} }
\newcommand{\DC}{ {\mathrm {DC}} }
\newcommand{\GWO}{ {\mathrm {GWO}} }
\newcommand{\AD}{ {\mathrm {AD}} }
\newcommand{\PD}{ {\mathrm {PD}} }
\newcommand{\CH}{ {\mathrm {CH}} }
\newcommand{\GCH}{ {\mathrm {GCH}} }
\newcommand{\AInf}{ {\mathrm {AInf}} }
\newcommand{\AU}{ {\mathrm {AU}} }
\newcommand{\AP}{ {\mathrm {AP}} }
\newcommand{\APr}{ {\mathrm {APr}} }
\newcommand{\ASp}{ {\mathrm {ASp}} }
\newcommand{\ARp}{ {\mathrm {ARp}} }
\newcommand{\AFA}{ {\mathrm {AFA}} }
\newcommand{\BAFA}{ {\mathrm {BAFA}} }
\newcommand{\FAFA}{ {\mathrm {FAFA}} }
\newcommand{\SAFA}{ {\mathrm {SAFA}} }
\newcommand{\T}{\mathrm {T}}
\newcommand{\Th}{\mathrm {Th}}
\newcommand{\SSS}{\mathrm {S}}

\newcommand{\Rfn}{\mathrm{Rfn}}
\newcommand{\RFN}{\mathrm{RFN}}

\newcommand{\Val}{\mathrm{Val}}
\newcommand{\Dg}{\mathrm{Dg}}
\newcommand{\CK}{\mathrm{CK}}
\newcommand{\GLP}{ {\mathrm {GLP}} }
\newcommand{\OD}{ {\mathrm {OD}} }
\newcommand{\HOD}{ {\mathrm {HOD}} }

\pagestyle{headings}

\author{Denis~I.~Saveliev}
\title{A~note on restriction rules}
\date{}

\thanks{
\noindent
\!\!\!\!\!\!\!\!\!\!\!\!
{\em MSC~2010}:
Primary 
03C35, 
03F03 
Secondary 
03C40, 
03C50, 
03C70, 
03C75, 
03C85, 
03E45, 
03F35, 
03F45. 
}

\thanks{
\noindent
{\em Keywords}: 
rule of inference, 
restriction rule, 
omega-rule, 
infinitary rule,
complete theory, 
consistency, 
pointwise-definable model, 
ordinal definability, 
constructible universe, 
infinitary logic, 
second-order logic}

\thanks{
\noindent
{\em Acknowledgement}:
Partially supported by Russian Science Foundation 
grant 21-11-00318.	
}

\hide{
\begin{abstract}
We proffer a~class of rules of inference, called here 
restriction rules, allowing to deduce complete theories 
of given models.
E.g., the Zermelo--Fraenkel set theory in the logic 
expanded by the corresponding restriction rule gives 
the complete theory of $L$, the constructible universe. 
Similar rules can be provided for higher-order and 
infinitary logics, with the same effect.
\end{abstract}
}

\begin{abstract}
We consider a~certain class of infinitary rules 
of inference, called here restriction rules, using 
of which allows us to deduce complete theories of 
given models. The first instance of such rules was the 
$\omega$-rule introduced by Hilbert, and generalizations 
of the $\omega$-rule were first considered by Henkin. 
Later on Barwise showed that within countable languages, 
for any countable model~$\mathfrak M$, first-order logic 
expanded by the corresponding $\mathfrak M$-rule deduces 
from the diagram of $\mathfrak M$ all formulas that are 
true in all models that include~$\mathfrak M$, 
in particular, it deduces the (relativized) complete 
theory of~$\mathfrak M$. 
We show that, if the aim is only deducing 
the complete theory of a~given model, these countability 
assumptions can be omitted. Moreover, similar facts 
hold for infinitary and higher-order logics, 
even if these logics are highly incomplete. 
Finally, we show that Barwise's theorem in its 
stronger form, for vocabularies of arbitrary cardinality, 
holds for infinitary logics $\mathscr L_{\kappa,\lambda}$ 
whenever $\kappa$~is supercompact. 
The note also contains brief historical remarks.
\end{abstract}

\maketitle


\section{Introduction}

{\em True arithmetic}~$\TA$ is the complete 
first-order theory of $(\omega,0,s,+,\,\cdot\,,<)$, 
the standard model of Peano arithmetic~$\PA$. 
By the classical undefinability theorem of Tarski, 
$\TA$~is not arithmetically definable. A~sharper 
version of this result is Post's theorem stating that 
the $\Sigma^{0}_{n}$-fragment of $\TA$ is arithmetically 
definable but only by a~formula of complexity higher 
than~$\Sigma^{0}_{n}$. 

Since by G{\"o}del's second incompleteness theorem,
no recursively enumerable extension~$\T$ of $\PA$ 
proves even its own consistency $\Con(\T)$, which 
is a~$\Pi^{0}_{1}$-formula, a~fortiori there is no 
effective way to describe~$\TA$. Among non-effective 
ways to do this, one approach is to expand the usual 
first-order finitary language $\mathscr L_{\omega,\omega}$; 
the second-order language, or even the first-order 
infinitary language $\mathscr L_{\omega_1,\omega}$ 
with countable connectives, suffices to describe $\TA$ 
by a~single formula (see, e.g.,~\cite{Barwise Feferman}). 
Another approach is, remaining within the language 
$\mathscr L_{\omega,\omega}$, to extend the usual set 
of inference rules with a~schema of infinite rules, 
called the $\omega$-rule.

Recall that, for each formula $\varphi(x)$ in 
one parameter~$x$, the {\em $\omega$-rule} infers 
the formula $\forall x\,\varphi(x)$ from the countable 
set of formulas $\varphi(t_n)$, where $t_n$~is 
the term of form $s(s(\ldots(0)\ldots))$ with $n$~times 
applied~$s$ (interpreted by the successor operation):
$$
\dfrac
{\{\varphi(t_n):n\in\omega\}}
{\forall x\,\varphi(x)}.
$$
The $\omega$-rule first appeared in Hilbert's 
paper~\cite{Hilbert 1930}.%
\footnote{
Possibly the rule was mentioned in Tarski's 
``unpublished talk'', 1927, 
see \cite{Sundholm 1983} for historical details. 
}
Later on it was investigated by Rosser 
in~\cite{Rosser 1937}, who coined the name 
{\em Carnap's rule} that has since been 
occasionally found 
(Carnap mentioned the rule in~\cite{Carnap 1935}, 
p.~173, bottom), and then used in works 
of Novikov~\cite{Novikov 1943}, 
Sch{\"u}tte~\cite{Schutte 1951}, et~al.  
Shoenfield observed that a~constructive version of 
the $\omega$-rule suffices to deduce $\TA$ from~$\PA$ 
(\cite{Shoenfield}; see also, e.g.,~\cite{Franzen}). 
These investigations were developed by Orey~\cite{Orey 1956} 
and by Henkin \cite{Henkin 1954,Henkin 1957}, who 
considered generalizations of the $\omega$-consistency 
and the $\omega$-completeness; later on Barwise considered 
a~relativized variant of the rule and establish 
a~corresponding completeness result for arbitrary 
countable models of the countable language 
(\cite{Barwise 1975}, p.~87 onwards); 
see below for more on these results. 
A~few concrete instances of generalized $\omega$-rules 
can be found in the literature; these are, e.g., 
a~rule for a~non-standard model of $\PA$ instead of 
its standard model used in~\cite{Kotlarski etal}; 
a~rule for $\omega^{\mathrm{CK}}_1$, the least 
non-recursive ordinal, instead of~$\omega$ 
used in~\cite{Pohlers}. 

This note provides a~small contribution to the area. 
The main observation here is that, if our aim is 
only to deduce the complete theory of a~given model, 
the countability assumptions of~\cite{Barwise 1975} 
can be omitted. Moreover, analogous facts hold for 
infinitary and higher-order logics as well, even if 
these logics are highly incomplete.

The deductive systems considered here differ from those 
that were used by Barwise result only in minor details. 
As said, we consider rules with arbitrarily many 
premises and use them in languages of arbitrary 
cardinalities. We call them {\em restriction rules} since 
they restrict the class of models of a~given theory in 
the same way as the $\omega$-rule restricts the class 
of models of $\PA$ to the models elementarily equivalent 
to the standard model; namely, the resulting complete 
theories describe pointwise definable models. 
Unlike Barwise, instead of using constants 
for elements of a~model under consideration, we use terms 
(as in the original $\omega$-rule) or even, most generally, 
formulas that uniquely characterize the elements; the 
notion of the diagram of a~model generalizes accordingly.
Also unlike Barwise, we do not treat the generalized diagram 
as new logical axioms but rather as axioms of a~basic theory 
designed to derive the complete theory of the model from it.


For simplicity, we start with the definition of restriction 
rules in the term version, which gives us the $\omega$-rule 
as an obvious instance. We show that, for a~set $S$ of terms, 
whenever a~theory decides whether an atomic formula with 
terms in $S$ substituting its free variables is true or not, 
then the theory becomes complete if we add the corresponding 
$S$-rule to the usual rules of first-order logic 
(Theorem~\ref{t: syn complet}). Based on this, we establish
a~semantic completeness result stating that a~suitable 
$S$-rule allows one to deduce the complete theory 
$\Th(\mathfrak A)$ of a~given model~$\mathfrak A$ 
from an appropriate, diagram-like fragment of the theory
(Theorem~\ref{t: sem complet}).

Further, we provide the general version of restriction 
rules using uniqueness formulas and note that the term 
version and the general version are essentially equivalent. 
Then we formulate the general relativized versions of 
two previous completeness results, syntactic 
(Theorem~\ref{t: syn complet gen}) and semantic
(Theorem~\ref{t: sem complet gen}), where the latter 
relates to the theories of pointwise definable models. 
As an example, we consider an extension of $\ZFC$, 
Zermelo--Fraenkel set theory, endowed with the 
corresponding restriction rule and show that it gives 
the complete theory of~$L$, G{\"o}del's universe of 
constructible sets (Proposition~\ref{p: rel thy of L}). 
Finally, we point out that restriction rules naturally 
lead to certain polymodal logics in the same way as 
the $\omega$-rule leads to the well-known logic~$\GLP$.


In the last section, we extend this apparatus to 
higher logics. Mainly we consider first-order infinitary 
languages $\mathscr L_{\kappa,\lambda}$. As the 
corresponding restriction rules deal with formulas 
in~$<\lambda$ parameters, the universal quantifiers 
in their conclusions are also $({<}\lambda)$-ary. 
We show that the basic deductive system 
(consisting of immediate generalizations of the usual 
logical axioms and rules) strengthened with a~suitable 
restriction rule provides analogous syntactic and 
semantic completeness results as in the finitary case 
(Theorems \ref{t: syn complet inf} 
and~\ref{t: sem complet inf}), even if this logic 
without the restriction rule is highly incomplete. 
Moreover, we consider the deductive system that includes 
additional axioms and rules (related to distributivity 
and choice) and show that if $\kappa$~is supercompact 
then Barwise's theorem holds in 
$\mathscr L_{\kappa,\lambda}$ 
in a~stronger form, with vocabularies of arbitrary 
cardinality (Theorem~\ref{t: Barwise for supercompact}). 
Finally, we briefly discuss the case of second-order 
(finitary) languages and their appropriate restriction 
rules, consider the derivation of true second-order 
arithmetic as an example, and point out that this 
machinery extends to higher-order (even infinitary) 
languages.

The note also contains short overview on earlier 
results and poses some questions.

Let us remark that mathematical side of this note 
is quite easy and standard; our purpose is rather 
to present the idea of restriction rules allowing us 
to get the complete theories of arbitrary models, with 
the expectation that its further development may lead 
to finer constructions.


\section{Restriction rules, the term form}

In this and the next sections, we work within 
$\mathscr L_{\omega,\omega}$, the usual first-order 
finitary language. Fix a~vocabulary~$\tau$. 
Let $S$ be a~set of closed terms in~$\tau$. 
The {\em restriction rule given by~$S$}, or
the $S$-{\em rule} for short, is the schema of rules 
that, for each formula $\varphi$ in a~single 
free variable~$x$, states:
$$
\dfrac
{\{\varphi(t):t\in S\}}
{\forall x\,\varphi(x)}.
$$
\vskip+0.3em
Note that the $S$-rule is infinitary whenever $S$~is 
infinite.

We denote by $\vdash$ the provability generated by all 
standard axioms and rules of classical first-order logic, 
as well as any stronger provability, and by $\vdash_S$ 
the (stronger) provability generated by $\vdash$ and 
the $S$-rule.

Given a~theory~$\T$ and a~provability~$\vdash$,  
the system $(\T,\vdash)$ is: 
\begin{itemize}
\setlength\itemsep{0.2em}
\item[(i)] 
{\it consistent\/} iff 
there is no~$\varphi$ such that
$\T\vdash\varphi$ and $\T\vdash\neg\,\varphi$;
\item[(ii)] 
{\it complete\/} iff for every~$\varphi$, 
$\T\vdash\varphi$ or $\T\vdash\neg\,\varphi$;  
\item[(iii)] 
$S$-{\it consistent\/} iff
there is no~$\varphi$ such that
$\T\vdash\varphi(t)$ for all $t\in S$
and $\T\vdash\exists x\,\neg\,\varphi(x)$;
\item[(iv)] 
$S$-{\it complete\/} iff
for every~$\varphi$,
if $\T\vdash\varphi(t)$ for all $t\in S$
then $\T\vdash\forall x\,\varphi(x)$.
\end{itemize}

\hide{
[The name of the $S$-completeness is somewhat confusing 
as it does not provide a~completeness... Essentially, 
$(\T,\vdash)$ is $S$-complete iff it is closed under 
the $S$-rule, i.e., $[\T,\vdash]=[\T,\vdash_S]$.]
}

\begin{lmm}\label{l: triv}
For any $\T$ and $\vdash$, 
\begin{itemize}
\setlength\itemsep{0.2em}
\item[(i)] 
if $(\T,\vdash)$ is $S$-consistent,
then it is consistent;
\item[(ii)] 
if $(\T,\vdash)$ is $S$-consistent and complete,
then it is $S$-complete;
\item[(iii)] 
if $(\T,\vdash)$ is consistent and $S$-complete,
then it is $S$-consistent;
\item[(iv)] 
$(\T,\vdash_S)$ is $S$-complete.
\end{itemize}
\end{lmm}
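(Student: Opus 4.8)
The plan is to read off all four items directly from the definitions, the only background fact needed being that $\vdash$ extends classical first-order logic; in particular ex falso quodlibet holds, and $\neg\forall x\,\varphi(x)$ and $\exists x\,\neg\varphi(x)$ are interderivable.

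For (i), I would argue by contraposition. If $(\T,\vdash)$ is inconsistent, then $\T\vdash\psi$ for \emph{every} sentence $\psi$; fixing any formula $\varphi(x)$ in the single free variable $x$ (say $x=x$), this yields both $\T\vdash\varphi(t)$ for all $t\in S$ and $\T\vdash\exists x\,\neg\varphi(x)$, so $(\T,\vdash)$ is not $S$-consistent. For (ii), assume $(\T,\vdash)$ is $S$-consistent and complete and suppose $\T\vdash\varphi(t)$ for all $t\in S$. By completeness either $\T\vdash\forall x\,\varphi(x)$, which is the desired conclusion, or $\T\vdash\neg\forall x\,\varphi(x)$, hence $\T\vdash\exists x\,\neg\varphi(x)$; the latter together with $\T\vdash\varphi(t)$ for all $t\in S$ contradicts $S$-consistency, so the first alternative must hold, establishing $S$-completeness. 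For (iii), assume $(\T,\vdash)$ is consistent and $S$-complete and suppose, toward a contradiction, that it fails $S$-consistency: some $\varphi$ satisfies $\T\vdash\varphi(t)$ for all $t\in S$ and $\T\vdash\exists x\,\neg\varphi(x)$. Then $S$-completeness gives $\T\vdash\forall x\,\varphi(x)$, while the second hypothesis gives $\T\vdash\neg\forall x\,\varphi(x)$, so $\T$ proves a formula and its negation, contradicting consistency. For (iv), if $\T\vdash_S\varphi(t)$ for all $t\in S$, then a single application of the $S$-rule gives $\T\vdash_S\forall x\,\varphi(x)$; this is immediate from the definition of $\vdash_S$ as the provability generated by $\vdash$ together with the $S$-rule.

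As for the main obstacle: there really is none — the statement is a bookkeeping exercise with the four definitions. The only points worth a word of care are the classical-logic passage between $\neg\forall$ and $\exists\neg$ used in (ii) and (iii), and the use of ex falso in (i), both of which are licensed by the standing assumption that $\vdash$ subsumes classical first-order logic (and a fortiori by any stronger $\vdash$).
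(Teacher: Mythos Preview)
Your proof is correct and matches the paper's approach exactly: the paper's entire proof reads ``Straightforward from definitions,'' and what you have written is precisely the unpacking of that phrase. Each of your four arguments is the canonical one, and the minor care points you flag (ex falso for~(i), the classical equivalence of $\neg\forall x\,\varphi(x)$ and $\exists x\,\neg\,\varphi(x)$ for (ii) and~(iii)) are indeed licensed by the standing assumption that $\vdash$ contains classical first-order logic.
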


\begin{proof}
Straightforward from definitions.
\end{proof}



The notions of the $S$-consistency and the 
$S$-completeness for arbitrary sets $S$ of constants 
appeared in Henkin's papers 
\cite{Henkin 1954,Henkin 1957}, respectively. 
Before moving on, we shall very briefly outline 
the relevant works of Henkin, Orey, and Barwise; 
for a~bit more on them, see the Appendix.%
\footnote{Cf.~also Forster's commentary~\cite{Forster} 
on \cite{Henkin 1954,Henkin 1957}.}

By~\cite{Henkin 1954}, $\T$~is $S$-{\em satisfiable} 
(provided the vocabulary of $\T$ contains~$S$) iff 
there is a~model satisfying $\T$ the universe of which 
consists of interpretations of the constants in~$S$ 
(i.e., omitting the $1$-type $\{x\ne t:t\in S\}$). 
The following facts were stated in~\cite{Henkin 1954}: 
any $S$-satisfiable $\T$ is $S$-consistent, but not 
conversely unless $|S|<\omega$; stronger versions of 
the $S$-consistency (defined via formulas in $n$~free 
variables) form a~non-degenerate hierarchy, they are 
all implied by the $S$-satisfiability but still not 
sufficient to imply it; finally, a~notion of the 
{\em strong $S$-consistency} (defined uniformly for 
all formulas) gives exactly the $S$-satisfiability.

By~\cite{Henkin 1957}, 
$\T$~is $S$-{\em saturated}\,%
\footnote{Warning: do not confuse with saturated models!}
iff all sentences that are true in all models 
$S$-satisfying~$\T$ are deduced from $\T$ 
(in the usual first-order logic). 
The following facts were stated in~\cite{Henkin 1957}: 
any $S$-saturated~$\T$ is $S$-complete; the converse 
implication holds within languages in countable 
vocabularies whenever $|S|\le\omega$, but not generally; 
finally, a~notion of the {\em strong $S$-completeness} 
(defined uniformly for all pairs of sentences and formulas 
in one parameter) gives exactly the $S$-saturatedness.

Related ideas were given by Orey. By~\cite{Orey 1956}, 
$\T$~is {\em $\omega(\alpha)$-consistent} iff it is 
closed under $\alpha$~applications of the $\omega$-rule; 
such theories were first considered by 
Rosser~\cite{Rosser 1937}. As shown in~\cite{Orey 1956}, 
$\T$~is $\omega(\omega_1)$-consistent iff it is 
$\omega$-satisfiable; this follows from results 
of~\cite{Henkin 1957} as well (we may also notice that 
$\omega_1$ can be replaced with $\omega^{\CK}_1$ here). 

Generalizing these results by Henkin and Orey, 
Barwise considered a~relativized version of the $S$-rule 
proving the following completeness result
(\cite{Barwise 1975}, Theorem~3.5): 
in a~countable first-order language,
if $\mathfrak A$ is a~countable model, $S$~consists of 
constants for its elements, and there is a~distinguished 
predicate for the universe of~$\mathfrak A$, then 
formulas deduced from a~given~$\T$ and the diagram of 
$\mathfrak A$ via the $S$-rule are exactly those that 
are true in all models of $\T$ including $\mathfrak A$
as a~submodel.

Essentially, Barwise's theorem is a~combination of 
Theorem~\ref{t: sem complet} for countable models 
with the omitting types theorem; hence the countability 
restrictions. If, however, we abandon the latter and 
keep only the aim to deduce the complete theory of 
a~given model, these restrictions can be eliminated, 
i.e., the model and the language can be taken to be 
arbitrarily large. We start by proving this fact for 
$\mathscr L_{\omega,\omega}$; later we shall see 
that analogous facts hold for infinitary and 
higher-order logics as well.


Let $[\T,\vdash]$ denote $\{\varphi:\T\vdash\varphi\}$. 
E.g., if $\vdash$~is the usual first-order provability, 
$[\PA,\vdash]$ is an incomplete theory while
$[\PA,\vdash_\omega]$ is~$\TA$, where $\vdash_\omega$ 
denotes of course $\vdash$ expanded by the $\omega$-rule.


\begin{thm}[Syntactic Completeness]\label{t: syn complet}
Let $\T$ be a~theory and $S$~a~set of closed terms.
Assume that for every atomic formula~$\varphi$,
$$
\T\vdash\bar\varphi
\;\text{ or }\;
\T\vdash\neg\,\bar\varphi
$$
whenever $\bar\varphi$ is the sentence obtained from 
$\varphi$ by a~substitution of some terms of~$S$ 
instead of all its free variables.
Then for every sentence~$\sigma$,
$$
\T\vdash_S\sigma
\;\text{ or }\;\
\T\vdash_S\neg\,\sigma,
$$
i.e., the theory $[\T,\vdash_S]$ is complete.
\end{thm}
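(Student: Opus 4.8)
The plan is to prove completeness by induction on the structure of sentences $\sigma$, showing that $[\T,\vdash_S]$ decides every $\sigma$. Write $\vDash$ for provability-or-refutability: say $\sigma$ is \emph{decided} if $\T\vdash_S\sigma$ or $\T\vdash_S\neg\,\sigma$. The base case is the hypothesis: atomic sentences obtained by substituting terms of $S$ are decided. But I first need to handle arbitrary atomic sentences — those may contain closed terms not in $S$. Here the standard trick is that by ordinary first-order logic an atomic sentence $\varphi(t_1,\dots,t_n)$ with arbitrary closed terms is equivalent (provably in $\T$, using equality axioms) to $\exists x_1\dots\exists x_n\,(\bigwedge_i x_i = t_i \wedge \varphi(x_1,\dots,x_n))$; more to the point, I do not actually need all atomic sentences decided, only that the inductive step goes through, so I will fold the treatment of non-$S$ terms into the quantifier step. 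Cleaner: first observe that for any closed term $t$, the sentence $\exists x\,(x=t)$ is logically valid, and combine this with the $S$-rule applied to $x\ne t$ to conclude $\forall x\,(x\ne t)$ is \emph{not} derivable unless $\T$ is inconsistent — this is a digression; let me not overcomplicate.

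The core induction runs as follows. Boolean steps are immediate: if $\sigma$ is $\neg\,\rho$ and $\rho$ is decided, then so is $\sigma$; if $\sigma$ is $\rho_1\wedge\rho_2$ and both conjuncts are decided, then either both are provable (so $\sigma$ is) or one is refutable (so $\sigma$ is refutable). The one real case is $\sigma = \forall x\,\varphi(x)$ (equivalently, by duality, $\exists x\,\varphi(x)$). Here I want to apply the $S$-rule. Consider the sentences $\varphi(t)$ for $t\in S$. Each $\varphi(t)$ is a sentence of lower complexity than $\sigma$ in the relevant sense — it has one fewer quantifier in its principal analysis — so by the induction hypothesis each $\varphi(t)$ is decided. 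Two subcases: if $\T\vdash_S\varphi(t)$ for \emph{every} $t\in S$, then the $S$-rule gives $\T\vdash_S\forall x\,\varphi(x)$, i.e., $\sigma$ is provable. Otherwise there is some $t_0\in S$ with $\T\vdash_S\neg\,\varphi(t_0)$; then since $\forall x\,\varphi(x) \to \varphi(t_0)$ is logically valid, we get $\T\vdash_S\neg\,\forall x\,\varphi(x)$, i.e., $\sigma$ is refutable. Either way $\sigma$ is decided.

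The subtlety making this a genuine induction rather than a one-line argument is the choice of complexity measure: substituting a closed term $t$ for $x$ in $\varphi(x)$ does not in general produce a syntactically smaller formula if $t$ is a compound term, but it does strictly decrease quantifier depth, and the Boolean connective count is unaffected by substitution of terms for variables — so I will induct on, say, the number of quantifier occurrences, with a secondary induction on the number of connectives. The base case of that measure is quantifier-free sentences, which are Boolean combinations of atomic sentences; so I do after all need \emph{every} atomic sentence decided, including those with terms outside $S$. The main obstacle is thus exactly this point: reducing an arbitrary atomic sentence to the hypothesis. I expect to dispatch it as follows — an atomic sentence with a compound closed term $t$ reduces, via the provable equivalence with $\exists z\,(z=t\wedge\varphi(z))$, to the quantifier case already handled, \emph{provided} the hypothesis also decides atomic sentences of the form $t=t'$ and $\varphi(z)$ with $z$ among the $S$-substituted slots; if $S$ is closed under the function symbols of $\tau$ (or the hypothesis is simply read as covering all atomic sentences whose terms are built from $S$), this is automatic. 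I will state the reduction carefully, note that the hypothesis as written already suffices because every closed term of $\tau$ is obtained by applying function symbols to terms, and then the induction closes. Once the quantifier-free case and the quantifier step are in place, the proof is complete, and by Lemma~\ref{l: triv}(iv) the system is moreover $S$-complete, consistent with the stated conclusion.
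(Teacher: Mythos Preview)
Your core induction---Boolean steps plus the quantifier step via the $S$-rule---is correct and is exactly what the paper does. The difficulty you spend most of the proposal on, however, is one you have manufactured by misreading the hypothesis, and your resolution of it is muddled.

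The hypothesis, read as written, already decides \emph{every} atomic sentence of the language. An atomic formula is $R(s_1,\ldots,s_n)$ or $s_1=s_2$ with the $s_i$ arbitrary terms of~$\tau$, not just variables; substituting $S$-terms for the free variables still leaves whatever constants and function applications were already there. In particular, an atomic sentence is itself an atomic formula with no free variables, and the vacuous substitution gives $\bar\varphi=\varphi$. So your worry about ``closed terms not in $S$'' is empty. Your attempted workaround via $\exists z\,(z=t\wedge\varphi(z))$ is both unnecessary and circular (it invokes the quantifier step, whose base case is the atomic case you are trying to establish), and the final hand-wave ``every closed term of $\tau$ is obtained by applying function symbols to terms'' is not the right reason.

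The paper avoids even the appearance of this issue, and also your need for a bespoke complexity measure, by organizing the induction slightly differently: instead of inducting on sentences by quantifier count, it inducts on \emph{formulas}~$\psi$ (free variables allowed) and proves that every $\bar\psi$---the result of substituting $S$-terms for all free variables of~$\psi$---is decided. The atomic case is then literally the hypothesis. In the quantifier step $\psi=\forall x\,\varphi(x,v,\ldots)$, the subformula $\varphi$ is structurally smaller than~$\psi$, so the induction hypothesis applies to it directly, and the instances needed for the $S$-rule are exactly the $\bar\varphi$'s with one more $S$-term plugged in for~$x$. No separate measure on sentences is required. Once you strip out the detour, your argument and the paper's coincide.
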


\begin{proof}
Clearly, it suffices to verify that 
for every formula~$\psi$,
$$
\T\vdash_S\bar\psi
\;\text{ or }\;
\T\vdash_S\neg\,\bar\psi
$$
whenever $\bar\psi$ is obtained from~$\psi$
in the described way. Let us prove this 
by induction on construction of~$\psi$.

\begin{atomic}
By assumption (and since $\vdash_S$ includes~$\vdash$).
\end{atomic}
              
\begin{negation}
If the claim holds for~$\varphi$,
then it trivially holds for~$\neg\,\varphi$.
\end{negation}

\begin{conjunction}
Let $\psi$ be $\varphi_0\wedge\varphi_1$.
Fix some~$\bar\psi$; then $\bar\psi$ is 
$\bar{\varphi_0}\wedge\bar{\varphi_1}$ with the 
inherited $\bar{\varphi_0}$ and~$\bar{\varphi_1}$. 
We have two cases:

\begin{case1}
$\T\vdash_S\bar{\varphi_0}$ and 
$\T\vdash_S\bar{\varphi_1}$. Then
$\T\vdash_S\bar\psi$ by the conjunction rule.
\end{case1}

\begin{case2}
$\T\not\vdash_S\bar\varphi$
for some $\varphi\in\{\varphi_0,\varphi_1\}$. Then 
$\T\vdash_S\neg\,\bar\varphi$ (for this~$\bar\varphi$)
by inductive hypothesis. Therefore, since
$\neg\,\bar\varphi\to\neg\,\bar\psi$ is an instance 
of the conjunction axiom in contraposition,
$\T\vdash_S\neg\,\bar\psi$ by modus ponens.
\end{case2}
\end{conjunction}       

\begin{universal quantifier}
Let $\psi$ be $\forall x\,\varphi(x,v,\ldots)$.
Fix some~$\bar\psi$; then $\bar\psi$ is 
$\forall x\,\varphi(x,t,\ldots)$ where $\varphi$ 
has the free variable~$x$ and the inherited 
closed terms $t,\ldots\in S$. Denote this 
$\varphi(x,t,\ldots)$ by~$\chi$.
Again we have two cases:

\begin{case1}
$\T\vdash_S\bar\chi$ for all~$\bar\chi$.
Then $\T\vdash_S\bar\psi$ by the $S$-rule.
\end{case1}

\begin{case2}
$\T\not\vdash_S\bar\chi$ for some~$\bar\chi$.
Then $\T\vdash_S\neg\,\bar\chi$ (for this~$\bar\chi$)
by inductive hypothesis. Therefore,
$\T\vdash_S\neg\,\bar\psi$ by the generalization rule.
\end{case2}
\end{universal quantifier}

The theorem is proved. 
\end{proof}

Thus if a~$S$-complete $(T,\vdash)$ satisfies 
the assumption of Theorem~\ref{t: syn complet},
then it is complete.
Certainly, neither completeness nor $S$-completeness 
of $(T,\vdash_S)$ guarantees its consistency, even 
if $(T,\vdash)$ is consistent; e.g., let  
$T=\{\exists x\exists y\,(x\ne y)\}$ and $|S|=1$.

\vskip+1em

To get complete and consistent theories, we now 
turn to semantics. 

A~model~$\mathfrak A$ is an~$S$-{\em model} iff 
each $a\in A$ interprets some $t\in S$, i.e., iff 
$\mathfrak A$ omits the $1$-type $\{x\ne t:t\in S\}$.

\begin{lmm}\label{l: S-soundness}
The $S$-rule preserves validity in every $S$-model.
\end{lmm}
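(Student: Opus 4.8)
The plan is to unwind the definitions. Recall that the $S$-rule concludes $\forall x\,\varphi(x)$ from the family of premises $\{\varphi(t):t\in S\}$, and that an $S$-model is one in which every element of the universe is the interpretation of some term of $S$. So fix an $S$-model $\mathfrak A$, fix a formula $\varphi$ with the single free variable $x$, and suppose $\mathfrak A\models\varphi(t)$ for every $t\in S$; I must show $\mathfrak A\models\forall x\,\varphi(x)$.

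The argument is then immediate: let $a\in A$ be arbitrary. Because $\mathfrak A$ is an $S$-model, there is some $t\in S$ whose interpretation $t^{\mathfrak A}$ equals $a$. By hypothesis $\mathfrak A\models\varphi(t)$, and by the usual semantics of closed terms this means $\mathfrak A\models\varphi[a/x]$, i.e. $\varphi$ holds of $a$ in $\mathfrak A$. Since $a$ was arbitrary, $\mathfrak A\models\forall x\,\varphi(x)$, which is exactly the conclusion of the $S$-rule. Hence the rule preserves validity in $\mathfrak A$. (One should perhaps note that the premises may themselves contain other free variables, in which case the same reasoning applies under any assignment to those variables — the argument is uniform in such an assignment.)

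There is essentially no obstacle here; the only point worth being careful about is the one just noted, that the statement is really about preservation of validity of the premise-tuple under all variable assignments, and the substitution lemma (interpretation of $\varphi(t)$ under an assignment $s$ equals interpretation of $\varphi$ under the modified assignment $s[x\mapsto t^{\mathfrak A}]$) is the standard fact being invoked. This is why the lemma is flagged as routine. I would write the proof in just two or three sentences.

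\begin{proof}
Let $\mathfrak A$ be an $S$-model and suppose $\varphi(t)$ is valid in $\mathfrak A$ for every $t\in S$ (under every assignment to the parameters of~$\varphi$ other than~$x$). Given any such assignment and any $a\in A$, pick $t\in S$ with $t^{\mathfrak A}=a$, which exists since $\mathfrak A$ is an $S$-model; then, by the substitution lemma, the validity of $\varphi(t)$ yields that $\varphi$ holds of~$a$. As $a$ was arbitrary, $\forall x\,\varphi(x)$ is valid in $\mathfrak A$ under the given assignment, and hence in general.
\end{proof}
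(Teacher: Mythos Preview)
Your proof is correct and is exactly the unwinding of definitions that the paper has in mind; the paper's own proof is the single word ``Immediate.'' Your remark about handling additional free variables via an arbitrary assignment and the substitution lemma is a nice bit of care, but nothing beyond that is needed.
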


\begin{proof}
Immediate.
\end{proof}

Given a~model~$\mathfrak A$ with the universe~$A$ and 
$a\in A$, let us say that $t$~is a~{\em term for}~$a$ 
iff the set of points of~$\mathfrak A$ that are definable 
by the formula $x=t$ is the singleton~$\{a\}$.


\begin{thm}[Semantic Completeness]\label{t: sem complet}
Let $\mathfrak A$ be a~model, $S$ a~set of closed terms 
of form $t_a$ for each $a\in A$, 
and $\T$ a~theory (all in a~vocabulary~$\tau$).
Assume that 
$[\T,\vdash]\subseteq\Th(\mathfrak A)$ 
and 
$$
\T\vdash\bar\varphi
\;\text{ iff }\;
\mathfrak A\vDash\bar\varphi
$$
whenever $\varphi$~is an atomic or negated atomic formula 
and $\bar\varphi$~is obtained from it by a~substitution 
of some terms in $S$ instead of all its free variables. 
Then for all sentences~$\sigma$,
$$
\T\vdash_S\sigma
\;\text{ iff }\;
\mathfrak A\vDash\sigma,
$$
i.e., $[\T,\vdash_S]=\Th(\mathfrak A)$. 
\end{thm}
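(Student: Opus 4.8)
The plan is to deduce the biconditional $\T\vdash_S\sigma\iff\mathfrak A\vDash\sigma$ from the two syntactic ingredients already at hand: the completeness of $[\T,\vdash_S]$ coming from Theorem~\ref{t: syn complet}, and the soundness of the $S$-rule in $S$-models coming from Lemma~\ref{l: S-soundness}. First I would check that the hypotheses of Theorem~\ref{t: syn complet} are met. The assumption here is that $\T\vdash\bar\varphi$ iff $\mathfrak A\vDash\bar\varphi$ for atomic and negated atomic $\bar\varphi$; since $\mathfrak A$ decides every sentence, for each atomic $\bar\varphi$ either $\mathfrak A\vDash\bar\varphi$ (so $\T\vdash\bar\varphi$) or $\mathfrak A\vDash\neg\,\bar\varphi$ (so $\T\vdash\neg\,\bar\varphi$). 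Hence the hypothesis ``$\T\vdash\bar\varphi$ or $\T\vdash\neg\,\bar\varphi$'' of the syntactic theorem holds, and therefore $[\T,\vdash_S]$ is complete.

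Next I would establish the easy direction, $\T\vdash_S\sigma\implies\mathfrak A\vDash\sigma$. The key point is that $\mathfrak A$ is an $S$-model: by hypothesis $S$ contains a term $t_a$ for each $a\in A$, and ``$t_a$ is a term for $a$'' means exactly that the formula $x=t_a$ defines $\{a\}$ in $\mathfrak A$, so each element of $A$ is the interpretation of some term in $S$, i.e.\ $\mathfrak A$ omits $\{x\ne t:t\in S\}$. Now I argue that $\mathfrak A$ is a model of $[\T,\vdash_S]$ in the appropriate sense: the ordinary axioms and rules of first-order logic are sound, the axioms of $\T$ hold in $\mathfrak A$ because $[\T,\vdash]\subseteq\Th(\mathfrak A)$, and each application of the $S$-rule preserves truth in $\mathfrak A$ by Lemma~\ref{l: S-soundness}. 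A straightforward induction on $\vdash_S$-derivations then gives that everything in $[\T,\vdash_S]$ is true in $\mathfrak A$; in particular $\T\vdash_S\sigma$ yields $\mathfrak A\vDash\sigma$.

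Finally I would close the loop. Suppose $\mathfrak A\vDash\sigma$. If $\T\vdash_S\sigma$ we are done; otherwise, by completeness of $[\T,\vdash_S]$ we get $\T\vdash_S\neg\,\sigma$, whence by the direction just proved $\mathfrak A\vDash\neg\,\sigma$, contradicting $\mathfrak A\vDash\sigma$. Hence $\T\vdash_S\sigma$, and together with the easy direction this gives $\T\vdash_S\sigma\iff\mathfrak A\vDash\sigma$, i.e.\ $[\T,\vdash_S]=\Th(\mathfrak A)$.

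I do not expect any serious obstacle: the heavy lifting is done by Theorem~\ref{t: syn complet} and Lemma~\ref{l: S-soundness}, and the only mildly delicate point is the bookkeeping in the soundness induction — one must be careful that ``$\vdash$'' may already be a provability stronger than first-order logic, so the induction step for the non-$S$-rule part should simply invoke that $\vdash$ is assumed sound for theories whose axioms hold in $\mathfrak A$ (which is exactly what $[\T,\vdash]\subseteq\Th(\mathfrak A)$ packages), rather than re-verifying each classical rule. Once that is granted, the argument is the standard ``consistent complete theory with a model is the theory of that model'' pattern.
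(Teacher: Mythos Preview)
Your proposal is correct and follows essentially the same approach as the paper: soundness via Lemma~\ref{l: S-soundness} gives $[\T,\vdash_S]\subseteq\Th(\mathfrak A)$, and completeness via Theorem~\ref{t: syn complet} gives the reverse inclusion. You simply unpack the details the paper leaves implicit (verifying that the hypothesis of Theorem~\ref{t: syn complet} follows from the ``iff'' assumption, and that $\mathfrak A$ is an $S$-model).
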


\begin{proof} 
The inclusion $[\T,\vdash_S]\subseteq\Th(\mathfrak A)$ 
follows from $[\T,\vdash]\subseteq\Th(\mathfrak A)$
by Lemma~\ref{l: S-soundness}, and the converse 
inclusion holds since the theory $[\T,\vdash_S]$ 
is complete by Theorem~\ref{t: syn complet}. 
\end{proof}

\hide{
[It seems, 
we can replace the condition 
$[\T,\vdash]\subseteq\Th(\mathfrak A)$ 
with a~slightly simpler condition 
$\T\subseteq\Th(\mathfrak A)$ 
by assuming that $\vdash$ is sound.]
}


As a~simple instance,
recall that the {\it diagram\/} 
of~$\mathfrak A$, denoted by~$\mathrm{Dg}(\mathfrak A)$,
is the theory in the language expanded by adding 
constants~$c_a$ for all $a\in A$ and consisting of 
all atomic sentences and their negations that are 
true in~$\mathfrak A$. Let $S=\{c_a:a\in A\}$. 
By Theorem~\ref{t: sem complet}, we have 
$[\mathrm{Dg}(\mathfrak A),\vdash_S]=\Th(\mathfrak A)$.

\begin{rmk}
Concerning the size of the set~$S$ suitable to deduce 
$\Th(\mathfrak A)$, note that if the vocabulary of 
$\mathfrak A$ is of cardinality~$\kappa$, then by the 
L{\"o}wenheim--Skolem downward theorem $\mathfrak A$ 
has an elementary submodel of 
cardinality~$\le\max(\omega,\kappa)$,
so we can pick $S$ with $|S|\le\max(\omega,\kappa)$. 
\end{rmk}


\subsection*{Relativized restriction rules}

Restriction rules can be given in a~more general, 
relativized form. In the case, the deduced complete 
theory turns out to be ``localized'', i.e., relativized;  
this method allows one to work within a~larger theory. 

If $S$ is a~set of closed terms and $U$ a~unary predicate 
symbol, the {\em $S$-rule relativized to~$U$} is the 
following schema of rules, for each formula $\varphi$ 
in one free variable:
$$
\dfrac
{\{\varphi^U(t):t\in S\}}
{(\forall x\,\varphi(x))^U}
$$
where, as usual, $\psi^U$ denotes the relativization 
of $\psi$ to~$U$. The provability generated by $\vdash$ 
and the $S$-rule relativized to $U$ will be denoted 
by~$\vdash^{U}_{S}$. Note that the former $S$-rule can 
be obtained from this relativized form by adding 
$\forall x\,U(x)$.

A~slight modification of the previous arguments proves 
the analogs of the above results; we do not write up 
them here because in the next section we shall give an 
even more general definition of restriction rules.

\hide{
\vskip+1em
[Notation:

Given a~model~$\mathfrak A$ and a~set $S=\{t_a:a\in A\}$ 
of closed terms $t_a$ for each $a\in A$, if we disregard  
syntactical details, we call the corresponding rule just 
the {\em (relativized) $\mathfrak A$-rule} and write 
$\vdash_\mathfrak A$ instead of $\vdash_S$ and 
$\vdash^\mathfrak A$ instead of $\vdash^{U}_S$.]
}



\section{Restriction rules, the general form}

Now we formulate restriction rules in a~general form, 
without assuming terms in the language under consideration.

If $\theta(x)$ is a~formula in one free variable~$x$,
let us say that $\theta$ is a~{\em uniqueness formula 
in $(\T,\vdash)$}
iff the system proves that 
there exists exactly one~$x$ satisfying~$\theta(x)$:
$$
\T\;\vdash\;
\exists x\,
(\theta(x)\wedge\,
\forall y\,(\theta(y)\to x=y)).
$$


Let $\Theta$ be a~set of uniqueness formulas in $(\T,\vdash)$.
The {\em restriction rule given by $\Theta$}, 
or the $\Theta$-{\em rule} for short, is the schema 
of rules that states, for each formula~$\varphi$ in 
a~free variable~$x$:
$$
\frac
{
\bigl\{\exists x\,(\varphi(x)\wedge\theta(x)):
\theta\in\Theta\bigr\}
}
{\forall x\,\varphi(x)}.
$$ 
(Note that, since $\Theta$ consists of uniqueness 
formulas, we can alternatively define the $\Theta$-rule 
by changing the formulas 
$\exists x\,(\varphi(x)\wedge\theta(x))$
in the premise to the formulas 
$\forall x\,(\theta(x)\to\varphi(x))$ 
without changing the resulting deductive system.)

Likewise, the {\em restriction rule given by $\Theta$ 
relativized to~$U$}, shortly, the {\em 
$(\Theta,U)$-rule}, is the schema 
$$
\frac
{\bigl\{(\exists x\,(\varphi(x)\wedge\theta(x)))^U:
\theta\in\Theta\bigr\}}
{(\forall x\,\varphi(x))^U}.
$$ 
The symbols $\vdash_\Theta$ and $\vdash^{U}_\Theta$
denote the provability generated by $\vdash$ and the 
$\Theta$-rule, respectively, the $(\Theta,U)$-rule. 


As it is easy to see, both proposed versions of restriction 
rules, one using terms and another using uniqueness 
formulas, express essentially the same and are mutually 
changeable in the following sense.

\begin{lmm}\label{l: term vs uniq}
Let $T$ be a~theory and $\vdash$ a~provability.
\begin{enumerate}
\item[(i)] 
For any set $S$ of closed terms there is a~set 
$\Theta(S)$ of uniqueness formulas such that 
$[\T,\vdash_S]=[\T,\vdash_{\Theta(S)}]$.
\item[(ii)] 
For any set $\Theta$ of uniqueness formulas 
there is a~set $S(\Theta)$ of closed terms (perhaps, 
in an expanded language) such that 
$[T,\vdash_\Theta]=[\T,\vdash_{S(\Theta)}]$.
\end{enumerate}
\end{lmm}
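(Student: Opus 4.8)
The plan is to write down the two translations explicitly and then observe, in each direction, that the resulting deductive systems prove the same theorems, since they differ only by replacing premises of the infinitary rule with $\vdash$-provably equivalent ones.

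For~(i), given a set $S$ of closed terms, I would take $\Theta(S)=\{\,x=t:t\in S\,\}$. Each formula $x=t$ is a uniqueness formula in $(\T,\vdash)$, because $\exists x\,(x=t\wedge\forall y\,(y=t\to x=y))$ is a theorem of pure first-order logic (instantiate $x:=t$ and use reflexivity and symmetry of equality), hence $\T\vdash\exists x\,(x=t\wedge\forall y\,(y=t\to x=y))$ for every choice of $\vdash$. For a closed term $t$ the formula $\exists x\,(\varphi(x)\wedge x=t)$ is $\vdash$-provably equivalent to $\varphi(t)$; thus the premises of any instance of the $S$-rule and of the corresponding instance of the $\Theta(S)$-rule (both with conclusion $\forall x\,\varphi(x)$) are $\vdash$-equivalent premise by premise. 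As $\vdash$ contains classical first-order logic, a routine induction on the length of derivations shows that an application of either rule can be simulated using the other, so $[\T,\vdash_S]=[\T,\vdash_{\Theta(S)}]$.

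For~(ii), given a set $\Theta$ of uniqueness formulas in $(\T,\vdash)$, I would pass to the vocabulary $\tau'$ obtained by adjoining a fresh constant $c_\theta$ for every $\theta\in\Theta$, set $S(\Theta)=\{\,c_\theta:\theta\in\Theta\,\}$, and work with the extension $\T'$ of $\T$ by the defining axioms $\theta(c_\theta)$. Since each $\theta$ is a uniqueness formula, $\T'$ is an extension of $\T$ by definitions, hence conservative, and every $\tau'$-formula is $\T'$-provably equivalent to a $\tau$-formula; so replacing $\T$ by $\T'$ and $\tau$ by $\tau'$ has no effect on the theorems in the original language. Modulo the defining axioms one has $\varphi(c_\theta)\leftrightarrow\forall x\,(\theta(x)\to\varphi(x))$, using $\theta(c_\theta)$ together with the uniqueness clause of $\theta$; and since, as noted just before the statement, the $\Theta$-rule may be presented with premises $\forall x\,(\theta(x)\to\varphi(x))$, the premises of each instance of the $\Theta$-rule and of the matching instance of the $S(\Theta)$-rule are $\vdash$-equivalent premise by premise. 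The same induction as in~(i) then gives $[\T,\vdash_\Theta]=[\T,\vdash_{S(\Theta)}]$, where on the right $\T$ stands for its definitional extension $\T'$.

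I expect the only point needing a little care to be the bookkeeping in~(ii): one must check that adjoining the constants together with their defining axioms is compatible with the infinitary rule, i.e. that the uniform translation ``$c_\theta\mapsto$ the unique $x$ satisfying $\theta(x)$'' carries instances of the $S(\Theta)$-rule to instances of the $\Theta$-rule and back, and that the detour through $\tau'$ yields no new theorem of $\tau$. This is exactly the standard theory of extensions by definitions applied uniformly, so no genuine obstacle arises; the rest is routine manipulation of provable equivalences.
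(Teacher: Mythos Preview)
Your argument is correct. Part~(i) is exactly the paper's construction: set $\Theta(S)=\{x=t:t\in S\}$ and use that $\exists x\,(\varphi(x)\wedge x=t)$ is provably equivalent to $\varphi(t)$.

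For part~(ii) you take a slightly different route. The paper expands the language by Hilbert's $\iota$-operator and puts $S(\Theta):=\{\iota x\,\theta(x):\theta\in\Theta\}$, appealing to the Hilbert--Bernays result that adding $\iota$ yields a conservative extension. You instead adjoin a fresh constant $c_\theta$ for each $\theta\in\Theta$ together with the defining axiom $\theta(c_\theta)$, and invoke the standard theory of extensions by definitions. The two devices are interchangeable: the $\iota$-term $\iota x\,\theta(x)$ is precisely a canonical name for the object your $c_\theta$ denotes, and conservativity is the same fact in either packaging. Your version has the minor advantage of not introducing a new term-forming operator, at the cost of having to carry the defining axioms explicitly; the paper's version is more compact but assumes familiarity with the $\iota$-calculus. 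Your closing remark about checking that the translation commutes with the infinitary rule is the one nontrivial point, and it applies equally to the $\iota$-formulation; both proofs leave this as routine.
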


\begin{proof}
(i). 
If $\Theta(S):=\{x=t:t\in S\}$, then $\Theta(S)$ 
consists of uniqueness formulas, and we get:  
$\T\vdash_S\sigma$ iff 
$\T\vdash_{\Theta(S)}\!\sigma$,  
for all sentences~$\sigma$. 

(ii). 
Expand the language by adding the $\iota$-operator and 
put $S(\Theta):=\{\iota x\,\theta(x):\theta\in\Theta\}.$
Then $S(\Theta)$ consists of closed terms, and since 
using of $\iota$ gives a~conservative extension of 
the theory (see~\cite{Hilbert Bernays}), keeping for 
it the notation~$\T$, we get:
$\T\vdash_\Theta\sigma$ iff 
$\T\vdash_{S(\Theta)}\!\sigma$, 
for all $\iota$-free sentences~$\sigma$.
\end{proof}

\hide{
If $S$ consists of closed terms, put
$\Theta(S)=\{x=t:t\in S\}.$ Then $\Theta(S)$ 
consists of uniqueness formulas, and clearly
$$
\T\;\vdash_S\;\varphi
\;\text{ iff }\;
\T\;\vdash_{\Theta(S)}\;\varphi
$$
for every~$\varphi$.
In the opposite direction, if $\Theta$ consists 
of uniqueness formulas, expand the language by 
adding the $\iota$-operator and put
$
S(\Theta)=
\{\iota x\,\theta(x):\theta\in\Theta\}.
$
Then $S(\Theta)$ consists of closed terms, and 
since using of $\iota$ gives a~conservative extension 
of the theory (see~\cite{Hilbert Bernays}), it follows 
$$
\T\;\vdash_\Theta\;\varphi
\;\text{ iff }\;
\T\;\vdash_{S(\Theta)}\;\varphi
$$
for every $\iota$-free~$\varphi$.
}


The syntactic and semantic completeness results above
(Theorems \ref{t: syn complet} and~\ref{t: sem complet})
have obvious general versions using the $\Theta$-rule 
instead of terms.

\begin{thm}[Syntactic Completeness]\label{t: syn complet gen}
Let $\T$ be a~theory, $\vdash$ a~provability, 
$\Theta$ a~set of uniqueness formulas in $(\T,\vdash)$, 
and $U$~a~unary predicate symbol. Assume that 
\begin{enumerate}
\item[(i)] 
$\T\vdash\forall x\,(\theta(x)\to U(x))$
for all $\theta\in\Theta$, and 
\item[(ii)] 
for every atomic formula~$\varphi$ in $n+1$~variables 
and $\theta_0,\ldots,\theta_n\in\Theta$,
$$
\T\vdash\bar\varphi
\;\text{ or }\;
\T\vdash\neg\,\bar\varphi
$$
where $\bar\varphi$ is the sentence 
$
\exists x_0\ldots\exists x_n\,
(\varphi(x_0,\ldots,x_n)\wedge
\theta_0(x_0)\wedge\ldots\wedge\theta_n(x_n)).
$
\end{enumerate}
Then for every sentence~$\sigma$,
$$
\T\vdash^{U}_\Theta\sigma^{U}
\;\text{ or }\;\
\T\vdash^{U}_\Theta\neg\,\sigma^{U},
$$
i.e., the theory $\{\sigma^U:\T\vdash^{U}_\Theta\sigma\}$ 
is a~complete theory relativized to~$U$.
\end{thm}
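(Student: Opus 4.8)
The plan is to imitate the proof of Theorem~\ref{t: syn complet}, carrying the relativization $(\cdot)^U$ along and replacing the device ``substitute terms of~$S$ for the free variables'' by ``pin the free variables down by uniqueness formulas from~$\Theta$''. For a formula $\psi(x_0,\ldots,x_n)$ with free variables among those displayed and a tuple $\bar\theta=(\theta_0,\ldots,\theta_n)$ of members of~$\Theta$, set
$$
\psi[\bar\theta]\;:=\;
\exists x_0\ldots\exists x_n\bigl(\psi^U(x_0,\ldots,x_n)\wedge
\theta_0(x_0)\wedge\ldots\wedge\theta_n(x_n)\bigr),
$$
so that for a sentence $\sigma$ the empty instance is just $\sigma[\,]=\sigma^U$. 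It suffices to show, by induction on the construction of $\psi$, that
$$
\T\vdash^{U}_\Theta\psi[\bar\theta]
\;\text{ or }\;
\T\vdash^{U}_\Theta\neg\,\psi[\bar\theta]
$$
for every such $\bar\theta$; taking $\psi=\sigma$ then gives the theorem, and with it the completeness of the relativized theory $\{\sigma^U:\T\vdash^{U}_\Theta\sigma\}$.

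The base case, $\psi$ atomic, is exactly hypothesis~(ii), since then $\psi^U=\psi$ and $\vdash^{U}_\Theta$ extends~$\vdash$. Throughout the inductive step the recurring device is that each $\theta_i$ is a uniqueness formula in $(\T,\vdash)$, so $(\T,\vdash^{U}_\Theta)$ proves $\exists!\,x_i\,\theta_i(x_i)$; hence, for any formula $\chi(x_0,\ldots,x_n)$, it proves
$$
\exists x_0\ldots\exists x_n\bigl(\chi\wedge\theta_0(x_0)\wedge\ldots\wedge\theta_n(x_n)\bigr)
\;\leftrightarrow\;
\forall x_0\ldots\forall x_n\bigl(\theta_0(x_0)\wedge\ldots\wedge\theta_n(x_n)\to\chi\bigr),
$$
so that $\neg\,\psi[\bar\theta]$ and $(\neg\psi)[\bar\theta]$ are equivalent over $(\T,\vdash^{U}_\Theta)$. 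Since relativization commutes with $\neg$ and $\wedge$, the negation and conjunction cases then run as in Theorem~\ref{t: syn complet}: for a conjunction $\psi=\varphi_0\wedge\varphi_1$ one splits on whether $\T\vdash^{U}_\Theta\varphi_j[\bar\theta]$ holds for both $j$ (use the conjunction rule) or fails for some $j$ (use the conjunction axiom in contraposition, then modus ponens), after first moving $\exists$ and $\neg$ past the $\theta_i$-conjuncts via the equivalence above.

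For the universal-quantifier case, let $\psi=\forall y\,\varphi(y,x_0,\ldots,x_n)$, so $\psi^U=\forall y\,(U(y)\to\varphi^U(y,x_0,\ldots,x_n))$. Using the uniqueness of the $\theta_i$-points once more, one checks that $(\T,\vdash^{U}_\Theta)$ proves
$$
\psi[\bar\theta]\;\leftrightarrow\;
\forall y\bigl(U(y)\to\exists x_0\ldots\exists x_n(\varphi^U(y,x_0,\ldots,x_n)\wedge\theta_0(x_0)\wedge\ldots\wedge\theta_n(x_n))\bigr).
$$
Apply the inductive hypothesis to $\varphi(y,x_0,\ldots,x_n)$ with an additional uniqueness formula $\theta$ attached to $y$: for each $\theta\in\Theta$ the system decides $\varphi[\theta,\theta_0,\ldots,\theta_n]$, i.e.\ the sentence $\exists y\exists x_0\ldots\exists x_n(\varphi^U(y,x_0,\ldots,x_n)\wedge\theta(y)\wedge\theta_0(x_0)\wedge\ldots\wedge\theta_n(x_n))$. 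If this positive form is provable for \emph{every} $\theta\in\Theta$, then --- using hypothesis~(i) to locate each $\theta$-point inside~$U$ --- the premises of the $(\Theta,U)$-rule for the formula $\alpha(y):=\exists x_0\ldots\exists x_n(\varphi(y,x_0,\ldots,x_n)\wedge\theta_0(x_0)\wedge\ldots\wedge\theta_n(x_n))$ all become derivable in $(\T,\vdash^{U}_\Theta)$; the rule then yields $(\forall y\,\alpha(y))^U$, and $\psi[\bar\theta]$ follows by the displayed equivalence. If instead the negation is provable for some $\theta\in\Theta$, then the right-hand side of that equivalence already fails at the corresponding $\theta$-point, so $(\T,\vdash^{U}_\Theta)$ proves $\neg\,\psi[\bar\theta]$. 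This closes the induction.

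The step I expect to be the main obstacle is this ``all positive'' subcase, where the relativization has to be reconciled with the uniqueness formulas: the inductive hypothesis supplies sentences in which each $\theta$ occurs \emph{unrelativized}, while the $(\Theta,U)$-rule both consumes its premises and produces its conclusion in fully relativized form, with $\theta$ replaced by $\theta^U$. Hypothesis~(i), together with $\T\vdash\exists!\,x\,\theta(x)$, is precisely the bridge: it confines each $\theta$-point to~$U$, which is what lets one pass between the relativized and the unrelativized existential asserting a property of that point. Apart from this bookkeeping --- and the two displayed equivalences, which are pure first-order logic once the $\theta_i$-realizers are known to be unique --- the argument is a transcription of the proof of Theorem~\ref{t: syn complet}.
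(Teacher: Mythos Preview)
Your proposal is correct and is precisely what the paper intends: its entire proof of this theorem is the single line ``Modify the proof of Theorem~\ref{t: syn complet},'' and you have carried out exactly that modification, replacing term substitution by the $\Theta$-pinning device and threading the relativization $(\cdot)^U$ through the induction. Your identification of the commuting equivalences for uniqueness formulas (which the paper later isolates as Lemma~\ref{l: commuting with uniqueness} for the infinitary case) and of hypothesis~(i) as the bridge in the quantifier step is spot on.
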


(The predicate~$U$ may be new for $\T$, in which case 
we let (i) as an extra axiom, keeping the notation~$\T$ 
for the expansion.)

\begin{proof}
Modify the proof of Theorem~\ref{t: syn complet}.  
\end{proof}

In particular, we have the non-relativized variant:

\begin{coro}\label{c: syn complet gen}
Under the assumptions on $\T,\vdash,\Theta$ 
of Theorem~\ref{t: syn complet gen}, 
the theory $[\T,\vdash_\Theta]$ is complete. 
\end{coro}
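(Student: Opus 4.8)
The plan is to derive Corollary~\ref{c: syn complet gen} from Theorem~\ref{t: syn complet gen} by taking the relativizing predicate to be trivial. Concretely, I would introduce a fresh unary predicate symbol $U$ (new for $\T$) together with the extra axiom $\forall x\,U(x)$, keeping the notation $\T$ for the resulting conservative expansion. Then hypothesis (i) of Theorem~\ref{t: syn complet gen}, namely $\T\vdash\forall x\,(\theta(x)\to U(x))$ for all $\theta\in\Theta$, is immediate, and hypothesis (ii) is exactly the assumption of Corollary~\ref{c: syn complet gen} (the $\Theta$ used here is still a set of uniqueness formulas in $(\T,\vdash)$, since adding $\forall x\,U(x)$ does not affect provability of $\iota$/uniqueness statements not mentioning $U$). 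Moreover, with $\forall x\,U(x)$ available, for every sentence $\sigma$ we have $\T\vdash\sigma\lra\sigma^U$ by a routine induction on the construction of $\sigma$ (the relativization only inserts clauses $U(x)$ under quantifiers, which are vacuous given $\forall x\,U(x)$), and likewise $\vdash^{U}_\Theta$ and $\vdash_\Theta$ coincide up to this equivalence: the premises $(\exists x\,(\varphi(x)\wedge\theta(x)))^U$ and conclusions $(\forall x\,\varphi(x))^U$ of the $(\Theta,U)$-rule are provably equivalent to the corresponding premises and conclusions of the $\Theta$-rule.

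From Theorem~\ref{t: syn complet gen} applied to this $\T$, $\vdash$, $\Theta$, $U$, we get that for every sentence $\sigma$ either $\T\vdash^{U}_\Theta\sigma^U$ or $\T\vdash^{U}_\Theta\neg\,\sigma^U$. Combining this with the equivalences $\sigma^U\lra\sigma$ and the coincidence of $\vdash^{U}_\Theta$ with $\vdash_\Theta$ noted above, this yields $\T\vdash_\Theta\sigma$ or $\T\vdash_\Theta\neg\,\sigma$ for every sentence $\sigma$, which is precisely the statement that $[\T,\vdash_\Theta]$ is complete. Since the expansion by $U$ and $\forall x\,U(x)$ is conservative over the original language, the completeness transfers back to the original theory.

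Alternatively, and perhaps more cleanly, one can simply repeat the proof of Theorem~\ref{t: syn complet gen} (equivalently, of Theorem~\ref{t: syn complet}) with all relativizations deleted: the induction on the construction of a formula $\psi$, verifying that $\T\vdash_\Theta\bar\psi$ or $\T\vdash_\Theta\neg\,\bar\psi$ for every sentence $\bar\psi$ of the form $\exists x_0\cdots\exists x_n\,(\psi(x_0,\ldots,x_n)\wedge\theta_0(x_0)\wedge\cdots\wedge\theta_n(x_n))$ with $\theta_i\in\Theta$, goes through verbatim, with the $\Theta$-rule playing the role that the $S$-rule played in the universal-quantifier step. I expect the only point requiring any care is bookkeeping the multiple parameters: in the universal-quantifier case one fixes the ``outermost'' uniqueness formula $\theta_0$ via the $\Theta$-rule while the remaining $\theta_1,\ldots,\theta_n$ are inherited, and one must check that the premise format of the $\Theta$-rule (an $\exists$-sentence with one conjoined uniqueness formula) matches the $\bar\psi$ format after peeling off one quantifier — this uses that $\Theta$ consists of uniqueness formulas, so that $\exists x\,(\varphi(x)\wedge\theta(x))$ and $\forall x\,(\theta(x)\to\varphi(x))$ are interchangeable as remarked after the definition of the $\Theta$-rule. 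Everything else is exactly as before.
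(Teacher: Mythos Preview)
Your proposal is correct and takes essentially the same approach as the paper: the paper's proof is the one-liner ``Theorem~\ref{t: syn complet gen} for the universal~$U$'', which is exactly your first argument spelled out in detail. The paper also offers a parenthetical alternative---combine Theorem~\ref{t: syn complet} with Lemma~\ref{l: term vs uniq}(ii), i.e., pass from $\Theta$ to terms $S(\Theta)$ via the $\iota$-operator and invoke the term version directly---which is close in spirit to, but not quite the same as, your second alternative of rerunning the induction with relativizations deleted.
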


\begin{proof}
Theorem~\ref{t: syn complet gen} for the universal~$U$ 
(or, alternatively, combine Theorem~\ref{t: syn complet} 
with Lemma~\ref{l: term vs uniq}(ii)).
\end{proof}


Given a~model~$\mathfrak A$, an element $a\in A$ is 
{\em definable without parameters over}~$\mathfrak A$ 
iff there is a~formula $\theta(x)$ in $1$~free 
variable~$x$ such that 
$$
\mathfrak A\vDash 
(\theta(x)\,\wedge\,
\forall y\,(\theta(y)\to x=y))
\;[a/x].
$$
Thus $a$~satisfies a~unique principal complete $1$-type. 
\hide{

Let us say, as usual, $(\T,\vdash)$ is 
\begin{enumerate}
\item[(i)] 
{\em (semantically) sound} iff 
$\T\vdash\varphi$ implies $\T\vDash\varphi$, 
\item[(ii)] 
{\em (semantically) complete} iff 
$\T\vDash\varphi$ implies $\T\vdash\varphi$.
\end{enumerate}
Let 
\begin{enumerate}
\item[(a)] 
$\theta$~is a~uniqueness formula in $(\T,\vdash)$,
\item[(b)] 
in any model of $\T$, $\theta$~defines a~unique element.
\end{enumerate}
Obviously, 
if $(\T,\vdash)$ is sound then (a)~implies~(b), and 
if $(\T,\vdash)$ is complete then (b)~implies~(a). 
E.g., for the usual (first-order finitary) 
provability~$\vdash$, (a) and (b) are equivalent. 
}
Clearly, if $\vdash$ is the usual (first-order finitary) 
provability, then by its completeness, 
$\theta$~is a~uniqueness formula in 
$(\T,\vdash)$ iff $\theta$~defines a~unique element 
of~$\mathfrak A$, for any model $\mathfrak A$ of~$\T$. 

A~model $\mathfrak A$ is {\em pointwise definable} iff 
each of its elements is definable without parameters. 
For an arbitrary $\mathfrak A$, its {\em pointwise 
definable part}, denoted by $\pd(\mathfrak A)$, consists 
of all elements of $\mathfrak A$ that are definable 
without parameters; clearly, these elements always form 
a~submodel, and $\mathfrak A$ is pointwise definable 
iff $\mathfrak A=\pd(\mathfrak A)$.

\begin{lmm}\label{l: pd}
For any models $\mathfrak A,\mathfrak B$ 
of a~vocabulary~$\tau$,
\begin{enumerate}
\item[(i)]
$|\pd(\mathfrak A)|\le\max(|\tau|,\omega)$,
\item[(ii)]
$\mathfrak A\equiv\mathfrak B$ implies
$\pd(\mathfrak A)\simeq\pd(\mathfrak B)$, 
\item[(iii)]
$\pd(\mathfrak A)\equiv\mathfrak A$ iff 
$\pd(\mathfrak A)\preceq\mathfrak A$ iff 
$\mathfrak A$ has definable Skolem functions.
\end{enumerate}
\end{lmm}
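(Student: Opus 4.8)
The plan is to treat the three clauses separately; (i) and (ii) are short, and in (iii) I would prove a cycle of implications with only one substantial step. For (i), note that there are only $\max(|\tau|,\omega)$ many $\tau$-formulas in one free variable; since each $a\in\pd(\mathfrak A)$ is the unique realization in $\mathfrak A$ of at least one such formula, choosing one defining formula $\theta_a$ per $a$ gives an injection $a\mapsto\theta_a$, hence $|\pd(\mathfrak A)|\le\max(|\tau|,\omega)$. For (ii), observe that ``$\theta(x)$ defines a unique element'' is a single first-order sentence, $\exists x\,(\theta(x)\wedge\forall y\,(\theta(y)\to x=y))$, so by $\mathfrak A\equiv\mathfrak B$ it holds in $\mathfrak A$ iff it holds in $\mathfrak B$; thus the defining formula $\theta_a$ of any $a\in\pd(\mathfrak A)$ also defines a unique element $f(a)$ of $\mathfrak B$, necessarily in $\pd(\mathfrak B)$. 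That the resulting map $f\colon\pd(\mathfrak A)\to\pd(\mathfrak B)$ is well defined (independent of the choice of $\theta_a$), bijective, and an isomorphism for every function, relation and constant symbol reduces in each case to transferring one more sentence of the form ``the element defined by $\theta$ is (not) equal, or $R$-related, to the element defined by $\theta'$, \dots''; surjectivity follows by running the same construction from $\mathfrak B$.

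For (iii) I would establish the cycle
$$
[\pd(\mathfrak A)\preceq\mathfrak A]\ \Longrightarrow\ [\pd(\mathfrak A)\equiv\mathfrak A]\ \Longrightarrow\ [\mathfrak A\text{ has definable Skolem functions}]\ \Longrightarrow\ [\pd(\mathfrak A)\preceq\mathfrak A].
$$
The first implication is immediate. For the last, apply the Tarski--Vaught test to the submodel $\pd(\mathfrak A)\subseteq\mathfrak A$: if $\mathfrak A\vDash\exists y\,\varphi(y,\bar a)$ with $\bar a$ a tuple from $\pd(\mathfrak A)$, then the value at $\bar a$ of the definable Skolem function for $\varphi$ is a witness and, being definable without parameters from the parameter-free-definable tuple $\bar a$, it again lies in $\pd(\mathfrak A)$; hence $\pd(\mathfrak A)$ is an elementary submodel.

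The middle implication, $[\pd(\mathfrak A)\equiv\mathfrak A]\Rightarrow[\mathfrak A\text{ has definable Skolem functions}]$, is the step I expect to be the main obstacle. The plan is to first upgrade $\equiv$ to $\preceq$: by the remark preceding the lemma, for the usual first-order provability a formula is a uniqueness formula of the complete theory $\Th(\mathfrak A)=\Th(\pd(\mathfrak A))$ exactly when it defines a unique element in every model of that theory, so each $\theta_a$ ($a\in\pd(\mathfrak A)$) still isolates a single element of the structure $\pd(\mathfrak A)$; then, expressing $\exists y\,\varphi(y,\bar a)$ for $\bar a=(a_0,\dots,a_n)$ from $\pd(\mathfrak A)$ by the parameter-free sentence $\exists y\,\exists z_0\dots\exists z_n\,\bigl(\varphi(y,z_0,\dots,z_n)\wedge\bigwedge_i\theta_{a_i}(z_i)\bigr)$ and pulling a witness back down via $\pd(\mathfrak A)\equiv\mathfrak A$ should yield the Tarski--Vaught condition, so that $\pd(\mathfrak A)\preceq\mathfrak A$ and one can read off uniform definable Skolem functions. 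The delicate points, where I would spend most of the effort, are (a) verifying that the formulas $\theta_a$ define the \emph{same} elements in $\pd(\mathfrak A)$ as in $\mathfrak A$ --- a naive induction on formula complexity does not close, since a given $\theta_a$ may be more complex than the formula being analysed --- and (b) turning the witnesses thus obtained into a single Skolem formula uniform in the parameters.
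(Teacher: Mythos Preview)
The paper states this lemma without proof; there is nothing to compare against. Your arguments for (i) and (ii) are the standard ones and are correct. For (iii), your cycle is set up sensibly, and the two easy implications are fine.

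You are also right that the remaining implication, from $\pd(\mathfrak A)\equiv\mathfrak A$ to the existence of definable Skolem functions (or, as an intermediate step, to $\pd(\mathfrak A)\preceq\mathfrak A$), is where the real content lies. The two worries you flag are genuine. For (a), knowing only $\pd(\mathfrak A)\equiv\mathfrak A$ does \emph{not} a priori tell you that $\theta_a$ picks out the same element $a$ inside $\pd(\mathfrak A)$ as it does inside~$\mathfrak A$; elementary equivalence transfers the \emph{sentence} $\exists!x\,\theta_a(x)$ but not which element realizes it. One clean way around this is to avoid working inside $\pd(\mathfrak A)$ altogether: argue instead that $\Th(\mathfrak A)$ has a pointwise definable model (namely the prime model, obtained from $\pd(\mathfrak A)\equiv\mathfrak A$ together with~(ii)), and then extract Skolem functions from that. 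For (b), uniformity in the parameters is indeed not automatic from the mere existence, for each tuple $\bar a$, of a definable witness; you will need some device---such as a definable enumeration of the uniqueness formulas, or an explicit ``search for the least witness'' argument---to package the per-tuple witnesses into a single formula $\psi_\varphi(\bar x,y)$. The paper gives no indication of how it intends this step to go, so you are on your own here; but your identification of exactly where the difficulty sits is accurate.
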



Obviously, any pointwise definable model is {\em prime}, 
i.e., it elementary embeds into any model of its theory. 
Thus a~prime model is as simple as possible: it realizes 
only the types that cannot be omitted.


Let $\mathfrak A$ be a~model that is pointwise definable 
(possibly, in an expanded language) and $\Theta$ 
a~set of uniqueness formulas defining all $a\in A$. 
Let us say that a~sentence is {\em $\Theta$-atomic}
iff it has the form
$
\exists x_0\ldots\exists x_n\,
(\varphi(x_0,\ldots,x_n)\wedge
\theta_0(x_0)\wedge\ldots\wedge\theta_n(x_n))
$
where $\varphi$ is an atomic formula, say, in 
$n+1$~variables, and $\theta_0,\ldots,\theta_n\in\Theta$. 
The {\it generalized diagram\/} of $\mathfrak A$ 
(given by~$\Theta$) is the theory consisting of all 
$\Theta$-atomic and negated $\Theta$-atomic sentences 
that are true in~$\mathfrak A$; we denote it by 
$\Dg_\Theta(\mathfrak A)$. The relativized concepts 
$
\Dg^{U}_\Theta(\mathfrak A):=
\{\sigma^U:\sigma\in\Dg_\Theta(\mathfrak A)\}
$ 
and 
$\Th^U(\mathfrak A):=\{\sigma^U:\mathfrak A\vDash\sigma\}$
are useful when the theory of $\mathfrak A$ is considered 
inside a~larger theory.

\begin{thm}[Semantic Completeness]\label{t: sem complet gen}
Let $\mathfrak A$ be a~model pointwise definable 
by formulas in $\Theta=\{\theta_a:a\in A\}$, 
$\T$~a~theory, and $U$~a~unary predicate symbol.
Assume that 
$\T\vdash\forall x\,(\theta_a(x)\to U(x))$ 
for all $a\in A$, and 
$$
\Dg^{U}_\Theta(\mathfrak A)\subseteq
\{\sigma^U:\T\vdash\sigma\}\subseteq\Th^U(\mathfrak A).
$$ 
Then 
$$
\{\sigma^U:\T\vdash^{U}_\Theta\sigma\}=\Th^U(\mathfrak A).
$$
\end{thm}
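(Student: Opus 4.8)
The plan is to reduce Theorem~\ref{t: sem complet gen} to its non-relativized counterpart obtained from Theorem~\ref{t: syn complet gen}, in exactly the way Theorem~\ref{t: sem complet} was reduced to Theorem~\ref{t: syn complet}. So the proof splits into two inclusions.

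First I would handle the inclusion $\{\sigma^U:\T\vdash^{U}_\Theta\sigma\}\subseteq\Th^U(\mathfrak A)$, which is the soundness direction. Here I would argue that every rule used in a $\vdash^U_\Theta$-derivation preserves truth in $\mathfrak A$ once we interpret $U$ as the whole universe of $\mathfrak A$ — or, more precisely, that if all axioms of $\T$ relativized to $U$ hold in a structure whose $U$-part is (isomorphic to) $\mathfrak A$, then so do all consequences. The ordinary logical axioms and rules are sound as usual; for the $(\Theta,U)$-rule I would invoke (the relativized analogue of) Lemma~\ref{l: S-soundness}: since $\mathfrak A$ is pointwise definable by the $\theta_a\in\Theta$, the hypothesis $\mathfrak A\vDash\exists x\,(\varphi(x)\wedge\theta_a(x))$ for every $a\in A$ says $\varphi$ holds of every element of $\mathfrak A$, hence $\mathfrak A\vDash\forall x\,\varphi(x)$; relativizing to $U$ preserves this. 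Combined with the hypothesis $\{\sigma^U:\T\vdash\sigma\}\subseteq\Th^U(\mathfrak A)$, this gives $\{\sigma^U:\T\vdash^U_\Theta\sigma\}\subseteq\Th^U(\mathfrak A)$.

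For the reverse inclusion $\Th^U(\mathfrak A)\subseteq\{\sigma^U:\T\vdash^U_\Theta\sigma\}$, the key point is that the relativized theory $\{\sigma^U:\T\vdash^{U}_\Theta\sigma\}$ is complete (as a theory relativized to $U$) by Theorem~\ref{t: syn complet gen}. To apply that theorem I must check its two hypotheses. Hypothesis (i), $\T\vdash\forall x\,(\theta_a(x)\to U(x))$ for all $a$, is assumed outright. Hypothesis (ii) — that $\T$ decides every $\Theta$-atomic sentence — follows from $\Dg^U_\Theta(\mathfrak A)\subseteq\{\sigma^U:\T\vdash\sigma\}\subseteq\Th^U(\mathfrak A)$: for a $\Theta$-atomic $\bar\varphi$, either $\bar\varphi$ or $\neg\bar\varphi$ lies in $\Dg_\Theta(\mathfrak A)$ (since $\mathfrak A$, being pointwise definable by $\Theta$, decides it), so $\bar\varphi^U$ or $(\neg\bar\varphi)^U$ is provable from $\T$; and the two cases cannot both occur because the upper bound $\Th^U(\mathfrak A)$ is consistent. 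Hence Theorem~\ref{t: syn complet gen} applies and gives completeness. Now take any $\sigma$ with $\mathfrak A\vDash\sigma$, i.e.\ $\sigma^U\in\Th^U(\mathfrak A)$. By completeness, either $\T\vdash^U_\Theta\sigma$ or $\T\vdash^U_\Theta\neg\sigma$; the latter would, by the soundness direction just proved, put $(\neg\sigma)^U$ into $\Th^U(\mathfrak A)$, contradicting $\mathfrak A\vDash\sigma$. So $\T\vdash^U_\Theta\sigma$, i.e.\ $\sigma^U\in\{\sigma^U:\T\vdash^U_\Theta\sigma\}$, as desired.

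The only genuinely delicate point is the soundness argument for the relativized rule: one must be careful that "pointwise definable by $\Theta$" is exactly what makes the $(\Theta,U)$-premises $\{(\exists x\,(\varphi(x)\wedge\theta(x)))^U:\theta\in\Theta\}$ force the conclusion $(\forall x\,\varphi(x))^U$ in any model whose $U$-part realizes precisely the types pinned down by $\Theta$ — and hypothesis (i) is what guarantees the witnesses supplied by the $\theta_a$ actually lie inside $U$, so that the relativized quantifier ranges over all of them. Everything else is a routine transcription of the proofs of Theorems~\ref{t: syn complet} and~\ref{t: sem complet}, which is presumably why the authors will simply say ``modify the proof of Theorem~\ref{t: sem complet}'' or combine Theorem~\ref{t: syn complet gen} with an analogue of Lemma~\ref{l: S-soundness}.
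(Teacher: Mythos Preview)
Your proposal is correct and follows essentially the same approach as the paper: soundness via the (relativized analogue of) Lemma~\ref{l: S-soundness}, and the converse inclusion via the completeness furnished by Theorem~\ref{t: syn complet gen}. Indeed, the paper's entire proof is the single line ``Modify the proof of Theorem~\ref{t: sem complet}'', which is precisely what you anticipated; your write-up simply spells out that modification in full, including the verification that the hypotheses of Theorem~\ref{t: syn complet gen} are met.
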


\hide{

\begin{thm}[Semantic Completeness]\label{t: sem complet gen}
Let $\mathfrak A$ be a~model, 
$\Theta=\{\theta_a:a\in A\}$ where $\theta_a$~is 
a~uniqueness formula defining~$a$, $\T$~a~theory, 
and $U$~a~unary predicate symbol.
Assume that 
\begin{enumerate}
\item[(i)]
$\T\vdash\forall x\,(\theta_a(x)\to U(x))$ 
for all $a\in A$,
\item[(ii)]
$\{\sigma^U:\T\vdash\sigma\}\subseteq\Th(\mathfrak A)$, 
and
\item[(iii)]
for all $\Theta$-atomic~$\sigma$,
$$
\T\vdash\sigma 
\;\text{ iff }\;
\mathfrak A\vDash\sigma. 
$$ 
\end{enumerate}
Then for all sentences~$\sigma$,
$$
\T\vdash^{U}_\Theta\sigma
\;\text{ iff }\;
\mathfrak A\vDash\sigma,
$$
i.e., 
$\{\sigma^U:\T\vdash^{U}_\Theta\sigma\}=\Th(\mathfrak A)$.
\end{thm}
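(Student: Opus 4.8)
The strategy is to reduce Theorem~\ref{t: sem complet gen} to the relativized syntactic completeness result (Theorem~\ref{t: syn complet gen}) together with a soundness observation for the $(\Theta,U)$-rule, exactly as Theorem~\ref{t: sem complet} was reduced to Theorem~\ref{t: syn complet}. First I would check that the hypotheses of Theorem~\ref{t: sem complet gen} imply those of Theorem~\ref{t: syn complet gen}. Hypothesis (i) of the latter is granted verbatim. For hypothesis (ii), observe that every $\Theta$-atomic sentence~$\sigma$ has the shape $\exists x_0\ldots\exists x_n\,(\varphi\wedge\theta_0(x_0)\wedge\ldots\wedge\theta_n(x_n))$; since $\mathfrak A$ is pointwise definable by the formulas in~$\Theta$, each $\theta_i$ defines a unique $a_i\in A$, so $\mathfrak A\vDash\sigma$ iff $\mathfrak A\vDash\varphi[a_0,\ldots,a_n]$, and thus $\mathfrak A$ decides every $\Theta$-atomic sentence. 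The sandwich condition $\Dg^U_\Theta(\mathfrak A)\subseteq\{\sigma^U:\T\vdash\sigma\}\subseteq\Th^U(\mathfrak A)$ then forces, for each $\Theta$-atomic~$\sigma$: if $\mathfrak A\vDash\sigma$ then $\sigma\in\Dg_\Theta(\mathfrak A)$, so $\T\vdash\sigma$ (hence $\T\vdash\bar\varphi$ in the notation of Theorem~\ref{t: syn complet gen}); if $\mathfrak A\not\vDash\sigma$, then $\mathfrak A\vDash\neg\sigma$, so $\neg\sigma\in\Dg_\Theta(\mathfrak A)$ and $\T\vdash\neg\sigma$. Either way $\T$ decides~$\sigma$, which is exactly hypothesis (ii).

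Applying Theorem~\ref{t: syn complet gen}, the theory $\{\sigma^U:\T\vdash^U_\Theta\sigma\}$ is a complete theory relativized to~$U$; in particular, for every sentence~$\sigma$, either $\T\vdash^U_\Theta\sigma^U$ or $\T\vdash^U_\Theta\neg\sigma^U$. It remains to match this complete relativized theory with $\Th^U(\mathfrak A)$. The inclusion $\Th^U(\mathfrak A)\subseteq\{\sigma^U:\T\vdash^U_\Theta\sigma\}$ will follow from completeness once we have the reverse inclusion: if $\sigma$ is such that $\mathfrak A\vDash\sigma$ but $\T\not\vdash^U_\Theta\sigma$, then by completeness $\T\vdash^U_\Theta\neg\sigma^U$, contradicting $\neg\sigma^U\in\{\sigma^U:\T\vdash^U_\Theta\sigma\}\subseteq\Th^U(\mathfrak A)$ and $\mathfrak A\vDash\sigma$. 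So the real work is soundness: $\{\sigma^U:\T\vdash^U_\Theta\sigma\}\subseteq\Th^U(\mathfrak A)$, i.e.\ every sentence derivable from~$\T$ via $\vdash$ and the $(\Theta,U)$-rule holds, after relativization to~$U$, in~$\mathfrak A$.

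For soundness one argues by induction on the length of the $\vdash^U_\Theta$-derivation. Axioms and the ordinary rules behind $\vdash$ are handled by the hypothesis $\{\sigma^U:\T\vdash\sigma\}\subseteq\Th^U(\mathfrak A)$ (note $\vdash$ is sound, and relativization commutes with the logical rules). The only new case is an application of the $(\Theta,U)$-rule: suppose $\mathfrak A\vDash(\exists x\,(\varphi(x)\wedge\theta(x)))^U$ for every $\theta\in\Theta$; we must show $\mathfrak A\vDash(\forall x\,\varphi(x))^U$. Since $\mathfrak A$ is pointwise definable by~$\Theta$, for each $a\in A$ there is $\theta_a\in\Theta$ with $\theta_a$ defining~$a$ in~$\mathfrak A$; because $\T\vdash\forall x\,(\theta_a(x)\to U(x))$ and $\{\sigma^U:\T\vdash\sigma\}\subseteq\Th^U(\mathfrak A)$, we get that $a$ lies in $U^{\mathfrak A}$, so $U^{\mathfrak A}=A$ and relativization to~$U$ is vacuous in~$\mathfrak A$. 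Then the premise with $\theta=\theta_a$ says $\mathfrak A\vDash\exists x\,(\varphi(x)\wedge\theta_a(x))$, i.e.\ $\mathfrak A\vDash\varphi[a]$ since $\theta_a$ pins down~$a$; as $a$ was arbitrary, $\mathfrak A\vDash\forall x\,\varphi(x)$, which is the conclusion. This is the analogue of Lemma~\ref{l: S-soundness}, and the step where the pointwise-definability hypothesis is genuinely used.

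**Main obstacle.** The induction and the reduction are routine; the one point needing care is the bookkeeping around relativization to~$U$ — in particular verifying that $U^{\mathfrak A}=A$ (so that $\sigma^U$ and $\sigma$ coincide in~$\mathfrak A$) before invoking the $(\Theta,U)$-rule, and keeping track of which sentences are being relativized when chaining the inclusions. With that settled, the two inclusions $\Th^U(\mathfrak A)\subseteq\{\sigma^U:\T\vdash^U_\Theta\sigma\}$ and its reverse close up via the relativized completeness from Theorem~\ref{t: syn complet gen}, giving the claimed equality.
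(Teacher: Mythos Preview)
Your proposal is correct and follows essentially the same approach as the paper: the paper's proof simply says ``Modify the proof of Theorem~\ref{t: sem complet}'', which amounts to exactly the two ingredients you spell out---soundness of the $(\Theta,U)$-rule (the analogue of Lemma~\ref{l: S-soundness}) giving the inclusion $\{\sigma^U:\T\vdash^{U}_\Theta\sigma\}\subseteq\Th(\mathfrak A)$, and syntactic completeness (Theorem~\ref{t: syn complet gen}) forcing the reverse inclusion. Your added detail about verifying $U^{\mathfrak A}=A$ and checking that hypothesis~(iii) yields hypothesis~(ii) of Theorem~\ref{t: syn complet gen} is exactly the bookkeeping the paper leaves implicit.
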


}

\begin{proof} 
Modify the proof of Theorem~\ref{t: sem complet}. 
\end{proof}

Briefly speaking, if $(\T,\vdash)$ correctly thinks 
about $\mathfrak A$ and deduces its generalized diagram, 
then $(\T,\vdash^{U}_\Theta)$ deduces the complete 
theory of~$\mathfrak A$ relativized to~$U$.

In particular, we have the non-relativized variant:

\begin{coro}\label{c: sem complet gen}
Let $\mathfrak A$ be a~model pointwise definable 
by formulas in $\Theta$ and $\T$~a~theory such that 
$$
\Dg_\Theta(\mathfrak A)\subseteq
[\T,\vdash]\subseteq\Th(\mathfrak A).
$$
Then 
$[\T,\vdash_\Theta]=\Th(\mathfrak A).$
\end{coro}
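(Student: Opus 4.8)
The plan is to derive Corollary~\ref{c: sem complet gen} as the non-relativized special case of Theorem~\ref{t: sem complet gen}, exactly as was done for the earlier corollaries. First I would take $U$ to be a unary predicate symbol together with the extra axiom $\forall x\,U(x)$, keeping the notation $\T$ for the expanded theory; under this choice the relativization $\psi^U$ is provably equivalent to $\psi$ for every formula $\psi$, the $(\Theta,U)$-rule collapses to the plain $\Theta$-rule, and the hypothesis $\T\vdash\forall x\,(\theta_a(x)\to U(x))$ holds trivially for all $a\in A$.

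Next I would check that the remaining hypothesis of Theorem~\ref{t: sem complet gen}, namely $\Dg^{U}_\Theta(\mathfrak A)\subseteq\{\sigma^U:\T\vdash\sigma\}\subseteq\Th^U(\mathfrak A)$, reduces under $\forall x\,U(x)$ to exactly the assumption we are given, $\Dg_\Theta(\mathfrak A)\subseteq[\T,\vdash]\subseteq\Th(\mathfrak A)$. Indeed $\Dg^{U}_\Theta(\mathfrak A)$ and $\Th^U(\mathfrak A)$ are by definition the $U$-relativizations of $\Dg_\Theta(\mathfrak A)$ and $\Th(\mathfrak A)$, and with $\forall x\,U(x)$ available these relativizations are provably (hence also semantically, since $\mathfrak A$ with $U$ interpreted as the whole universe is the relevant model) equivalent to the unrelativized sentences. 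Applying Theorem~\ref{t: sem complet gen} then yields $\{\sigma^U:\T\vdash^{U}_\Theta\sigma\}=\Th^U(\mathfrak A)$, and stripping the now-vacuous relativization on both sides gives $[\T,\vdash_\Theta]=\Th(\mathfrak A)$, as desired.

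Alternatively, and perhaps more transparently, I would combine Theorem~\ref{t: sem complet} with Lemma~\ref{l: term vs uniq}(ii): given $\Theta$ pointwise defining $\mathfrak A$, pass to the expanded language with the $\iota$-operator, set $S(\Theta)=\{\iota x\,\theta_a(x):\theta_a\in\Theta\}$, observe that each $t_a:=\iota x\,\theta_a(x)$ is a term for $a$ in the sense needed by Theorem~\ref{t: sem complet} (since $\theta_a$ defines $a$ uniquely in $\mathfrak A$), note that $\Dg_\Theta(\mathfrak A)\subseteq[\T,\vdash]\subseteq\Th(\mathfrak A)$ supplies the atomic/negated-atomic biconditional $\T\vdash\bar\varphi$ iff $\mathfrak A\vDash\bar\varphi$ required there, invoke Theorem~\ref{t: sem complet} to get $[\T,\vdash_{S(\Theta)}]=\Th(\mathfrak A)$, and finally use the conservativity of the $\iota$-extension (Lemma~\ref{l: term vs uniq}(ii)) to conclude $[\T,\vdash_\Theta]=\Th(\mathfrak A)$ on $\iota$-free sentences.

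The only genuine point requiring a little care — and hence the step I expect to be the main obstacle, modest as it is — is the translation between the $\Theta$-atomic sentences of the generalized diagram and the atomic/negated-atomic sentences with substituted terms used in Theorem~\ref{t: sem complet}: one must verify that $\mathfrak A\vDash\exists\bar x\,(\varphi(\bar x)\wedge\bigwedge_i\theta_i(x_i))$ iff $\mathfrak A\vDash\varphi(t_{a_0},\ldots,t_{a_n})$ for the appropriate $a_i$, which holds precisely because each $\theta_{a_i}$ pins down $a_i$ uniquely, and likewise for the provability side using the $\iota$-axioms. Everything else is bookkeeping, so I would simply write "Modify the proof of Corollary~\ref{c: syn complet gen}, using Theorem~\ref{t: sem complet gen} (or combine Theorem~\ref{t: sem complet} with Lemma~\ref{l: term vs uniq}(ii))."
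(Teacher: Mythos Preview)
Your proposal is correct and matches the paper's approach: the paper's proof is the single line ``Theorem~\ref{t: sem complet gen} for the universal~$U$,'' which is precisely your first route. Your alternative via Theorem~\ref{t: sem complet} and Lemma~\ref{l: term vs uniq}(ii) also works and mirrors the parenthetical alternative the paper offers for Corollary~\ref{c: syn complet gen}.
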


\begin{proof}
Theorem~\ref{t: sem complet gen} for the universal~$U$. 
\end{proof}

\begin{rmk}
Theorem~\ref{t: sem complet gen} can be generalized 
further by using various predicates~$U_i$ to deduce 
complete theories of many models (each relativized 
to its own~$U_i$) simultaneously. Since such 
a~generalization is routine, we leave it to the reader. 
\end{rmk}


\hide{

\newpage

CORRECT!!

[Below $\vdash^{\omega}$ denotes (in the context of $\ZFC$) 
the provability expanded by the relativized $\omega$-rule, 
i.e., $\vdash^{U}_\Theta$ where $U(x)$ means that $x$ is 
a~finite ordinal and $\Theta$~consists of uniqueness 
formulas describing finite ordinals.]

\begin{thm}\label{t: zfc + rel omega-rule}
Let $\theta(x)$ be a~uniqueness formula in $(\ZFC,\vdash)$.
Then $[\ZFC,\vdash^{\omega}]$ contains the complete theory 
of $(x,\in)$ with the set~$x$ satisfying $\theta(x)$, 
in the sense that 
$$
\ZFC\,\vdash^{\omega}\,
\exists x\,(\theta(x)\wedge\sigma^x)
$$
for all $\sigma\in\Th(x,\in)$.
\end{thm}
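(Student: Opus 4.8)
The plan is to deduce the statement from the general semantic completeness result, Theorem~\ref{t: sem complet gen}, by first specializing it to the finite ordinals and then reducing the theory of the set defined by~$\theta$ to arithmetic. I would begin by applying Theorem~\ref{t: sem complet gen} with $\T=\ZFC$, with $U(x)$ the predicate ``$x$ is a finite ordinal'', and with the set of uniqueness formulas consisting of the standard defining formulas $\theta_n(x)$ of the individual finite ordinals; call this set~$\Theta$. Since $\ZFC$ proves $\forall x\,(\theta_n(x)\to U(x))$ and decides every $\Theta$-atomic sentence built from the $\theta_n$ (each such sentence records a basic arithmetical fact about concretely named finite ordinals, which $\ZFC$ settles correctly), the hypotheses are met for $\mathfrak A=(\omega,\in)$. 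The conclusion is that $[\ZFC,\vdash^{\omega}]$ decides the entire relativized first-order theory of the finite ordinals; informally, the $\omega$-rule endows $\ZFC$ with all of true arithmetic.

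The second step is to pull the theory of the $\theta$-definable set~$A$ down to this arithmetic. Because $\theta$ is a uniqueness formula, $\ZFC$ proves that there is a unique $x$ with $\theta(x)$, and because that $x$ is a set, the Tarskian satisfaction relation for the structure $(x,\in)$ is $\ZFC$-definable. Hence $\ZFC$ proves, for each sentence~$\sigma$, the biconditional relating $\exists x\,(\theta(x)\wedge\sigma^x)$ to the arithmetical statement $\mathrm{Sat}(\ulcorner\sigma\urcorner)$ asserting that the code of~$\sigma$ is satisfied in the structure named by~$\theta$. It would then remain to check that whenever $\sigma\in\Th(A,\in)$ the corresponding instance of $\mathrm{Sat}$ is among the arithmetical consequences secured in the first step, after which relativizing and using that $\vdash^{\omega}$ extends~$\vdash$ assembles the desired $\ZFC\vdash^{\omega}\exists x\,(\theta(x)\wedge\sigma^x)$.

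The crux, and what I would settle before anything else, is the faithfulness of this reduction: the truth of~$\sigma$ in $(A,\in)$ has to be a fact about finite ordinals that the $\omega$-rule actually reaches. This is transparent when $(A,\in)$ is interpretable in $(\omega,\in)$ --- for example when $A$ is hereditarily finite, where the Ackermann coding makes $\Th(A,\in)$ recursively isomorphic to true arithmetic and the argument goes through verbatim. For a general definable~$A$ the point is genuinely delicate: since the $\omega$-rule is sound for every $\omega$-standard model of~$\ZFC$, one cannot hope to decide any part of $\Th(A,\in)$ that varies across such models, so the reduction in the second step can succeed only when $\Th(A,\in)$ does not outstrip arithmetic. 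Isolating exactly which uniqueness formulas~$\theta$ meet this requirement --- equivalently, pinning down the hypothesis on~$\theta$ (or the appropriately strengthened restriction rule relativized to~$A$ itself) under which the displayed conclusion holds --- is where I expect the real work to lie.
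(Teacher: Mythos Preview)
Your approach and the paper's are essentially the same reduction, packaged slightly differently. The paper first invokes L\"owenheim--Skolem inside $\ZFC$: from the unique $x$ with $\theta(x)$ one definably obtains a countable structure $(\omega,E)$ with $E\subseteq\omega^2$ and, for each fixed sentence~$\sigma$, $\ZFC\vdash\sigma^x\leftrightarrow\sigma^{(\omega,E)}$; it then proposes to apply Theorem~\ref{t: sem complet gen} directly to the model $(\omega,E)$, whose universe is pointwise definable by the standard numerals. Your version applies Theorem~\ref{t: sem complet gen} first (to $(\omega,\in)$, yielding true arithmetic) and then reduces $\exists x\,(\theta(x)\wedge\sigma^x)$ to an arithmetical $\mathrm{Sat}$-statement. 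Both routes funnel through the same bottleneck: one must know that $\ZFC$ already decides each atomic fact about the coded structure on~$\omega$.

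The gap you isolate in your final paragraph is real, and it is present in the paper's sketch as well. To apply Theorem~\ref{t: sem complet gen} to $(\omega,E)$ one needs $\Dg^{U}_\Theta(\omega,E)\subseteq\{\sigma^U:\ZFC\vdash\sigma\}$, i.e., $\ZFC$ must decide $E(t_m,t_n)$ for every pair $m,n$; equivalently, in your formulation, each instance $\mathrm{Sat}(\ulcorner\sigma\urcorner)$ must be an arithmetical truth rather than a set-theoretic statement that varies across $\omega$-standard models of~$\ZFC$. But for, say, $\theta$ defining $V_{\omega+2}$, the theory $\Th(x,\in)$ encodes~$\CH$, and since $\vdash^{\omega}$ is sound for every $\omega$-standard model, it cannot decide this. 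So the statement as written needs an extra hypothesis on~$\theta$ (for instance that the $\theta$-defined set be hereditarily finite, or more generally that $\Th(x,\in)$ be absolute for $\omega$-models), and you have correctly located where both arguments break without it.
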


\begin{proof}
As relativizations of arbitrary formulas to sets are 
$\Delta_1$-formulas, $\ZFC$~proves the L\"owenheim--Skolem 
theorem for sets, in the sense that, 
for all sentences~$\sigma$,
$$
\ZFC\vdash\forall x\exists y\,
(|y|=\omega\wedge\psi(x,y)\wedge(\sigma^x\;\lra\;\sigma^y))
$$
where $\psi(x,y)$ defines a~unique~$y$ from~$x$.

Therefore, we get (in $\ZFC$) that 
$(x,\in)$ is elementarily equivalent to $(\omega,E)$ 
for some $E\subseteq\omega^2$. Now apply to the model 
$(\omega,E)$ the relativized $\omega$-rule, 
or more precisely, 
Theorem~\ref{t: sem complet gen} for this model.
\end{proof}

Note that 
$\{\theta(x)\wedge\sigma^x:(x,\in)\vDash\sigma\}$ 
is a~complete $1$-type; thus the formula $\theta(x)$ 
locally realizes this type in the theory 
$[\ZFC,\vdash^{\omega}]$.

For instance, $[\ZFC,\vdash^{\omega}]$ contains 
the complete theory of $L_\alpha$ with any definable 
(without parameters)~$\alpha$, e.g., for $\omega^{\CK}_1$ 
or $\omega_1$ or the least $\alpha=\beth_\alpha$.

\vskip+1em
[Questions (auxiliary): 

1. 
Why $[\ZFC,\vdash^{\omega}]$ is not complete? 
What it says about CH, does this depend on a~particular~$V$?
Is it true that the existence of inaccessibles is still 
open in it?

2. 
Obviously, $\ZFC+\TA\subseteq[\ZFC,\vdash^{\omega}]$,
show $\ne$ by giving a~witnessing sentence. 

3. 
Can we replace $\ZFC$ by a~weaker theory, e.g., $\Z$ or $\KP$, 
i.e., does it prove the L\"owenheim--Skolem theorem for sets? 

4. 
Generalizations: 
(a) to extensions of $\ZFC$;
(b) to models in a~(countable) language interpreting 
their theories in set theory;
(c) to the language of set theory expanded by, say, 
$\kappa$~constants encoded in $H_\omega(\kappa)$ 
(sets that are hereditary finite over~$\kappa$).

\vskip+1em

}


\subsection*{An example in set theory.}
Let us discuss also an easy ``application'', or rather,
an instance of Theorem~\ref{t: sem complet gen}. 

Let $\alpha$ be the least ordinal such that 
$L_\alpha\prec L$ (or, equivalently, 
$L_\alpha\prec L_{\omega_1}$), 
so $\Th(L_\alpha)=\Th(L)$.
Notice that $L_\alpha$ is pointwise definable (indeed, 
the definable hull of the empty set in $L_\alpha$ 
collapses onto $L_\beta\prec L_\alpha$ for some 
$\beta\le\alpha$ by the G\"odel condensation lemma, 
but then $\beta=\alpha$ by minimality of~$\alpha$).
So let $\theta_a(x)$ define $a\in L_\alpha$, 
and let $\Theta=\{\theta_a:a\in L_\alpha\}$.

Expand the language of $\ZFC$ by a~new 
symbol~$U$ for the set~$L_\alpha$, and 
let $\ZFC^+$ be the extension of $\ZFC$ by the schema 
consisting of the formulas
$\forall x\,(\theta_a(x)\to x\in U)$
for all $a\in L_\alpha$,
and by $\Dg^{U}_\Theta(L_\alpha)$,  
the generalized diagram of $(L_\alpha,\in)$. 
As we have 
$
\{\sigma^{U}:\ZFC^+\vdash\sigma\}
\subseteq\Th(L_\alpha),
$ 
we may apply Theorem~\ref{t: sem complet gen} 
to obtain the complete theory of~$L$:

\begin{prop}\label{p: rel thy of L} 
In the notation above, 
$
\{\sigma^{U}:\ZFC^+\vdash^{U}_{\Theta}\sigma\}=\Th(L).
$
\end{prop}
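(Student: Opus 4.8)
The plan is to verify that the hypotheses of Theorem~\ref{t: sem complet gen} are satisfied with $\mathfrak A=(L_\alpha,\in)$, $\T=\ZFC^+$, and the predicate $U$ naming $L_\alpha$, and then simply invoke that theorem. Since $\Th(L_\alpha)=\Th(L)$ by the choice of $\alpha$, the conclusion $\{\sigma^{U}:\ZFC^+\vdash^{U}_{\Theta}\sigma\}=\Th^U(L_\alpha)$ immediately rewrites as the claimed equality with $\Th(L)$ (reading the relativization in the obvious way, or dropping it in the non-relativized reading).

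First I would recall why $L_\alpha$ is pointwise definable, so that the family $\Theta=\{\theta_a:a\in L_\alpha\}$ of uniqueness formulas exists: the Skolem hull of $\emptyset$ in $L_\alpha$ is transitive after collapse and equals some $L_\beta\preceq L_\alpha$ by G\"odel condensation, and minimality of $\alpha$ forces $\beta=\alpha$, so every element of $L_\alpha$ is definable without parameters. Each $\theta_a$ is then (in the usual first-order provability, hence in any $\vdash$ extending it) a genuine uniqueness formula in $(\ZFC^+,\vdash)$, because $\ZFC$ already proves basic facts about the well-ordering of $L$ and $\ZFC^+$ extends $\ZFC$.

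Next I would check the two explicit hypotheses of Theorem~\ref{t: sem complet gen}. The axioms $\forall x\,(\theta_a(x)\to x\in U)$ for all $a\in L_\alpha$ were put into $\ZFC^+$ by fiat, which gives the first hypothesis (with ``$x\in U$'' playing the role of ``$U(x)$''). For the sandwich $\Dg^{U}_\Theta(L_\alpha)\subseteq\{\sigma^U:\ZFC^+\vdash\sigma\}\subseteq\Th^U(L_\alpha)$: the left inclusion holds because $\Dg^{U}_\Theta(L_\alpha)$ is literally one of the axiom schemes defining $\ZFC^+$; the right inclusion is the genuinely substantive point and asserts that $\ZFC^+$ is a correct theory of the pair $(V,L_\alpha)$, i.e.\ whenever $\ZFC^+\vdash\sigma$ then $\sigma^U$ holds in the real universe, equivalently $L_\alpha\vDash\sigma$. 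I would argue this by exhibiting a model: in $V$ (working in the background theory $\ZFC$, which we assume consistent so that such discussion is meaningful — or, more formally, arguing syntactically), the structure consisting of $V$ together with the constant $U$ interpreted as the actual set $L_\alpha$ satisfies all axioms of $\ZFC$, all the sentences $\forall x\,(\theta_a(x)\to x\in U)$ (since $\theta_a$ really defines $a\in L_\alpha$), and all of $\Dg^{U}_\Theta(L_\alpha)$ (since that diagram consists exactly of $\Theta$-atomic sentences true in $L_\alpha$); hence every $\sigma$ with $\ZFC^+\vdash\sigma$ has $\sigma^U$ true in this structure, i.e.\ $L_\alpha\vDash\sigma$, giving $\sigma^U\in\Th^U(L_\alpha)$.

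With all hypotheses verified, Theorem~\ref{t: sem complet gen} yields $\{\sigma^{U}:\ZFC^+\vdash^{U}_{\Theta}\sigma\}=\Th^U(L_\alpha)=\Th^U(L)$, which is the assertion. The main obstacle is exactly the right-hand inclusion of the sandwich, i.e.\ making precise the sense in which $\ZFC^+$ ``correctly thinks about $L_\alpha$''; this is where one must either appeal to a semantic argument in the metatheory (the existence of the natural model $(V,L_\alpha)$ of $\ZFC^+$) or phrase everything as a conservativity/soundness statement. Everything else — pointwise definability of $L_\alpha$, that the $\theta_a$ are uniqueness formulas, and that the diagram axioms and the $\theta_a\to x\in U$ axioms are built into $\ZFC^+$ — is immediate from the construction.
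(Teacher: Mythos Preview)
Your proposal is correct and follows exactly the paper's approach: the paper's proof is simply the observation (stated in the text immediately preceding the proposition) that $\{\sigma^{U}:\ZFC^+\vdash\sigma\}\subseteq\Th(L_\alpha)$, followed by an application of Theorem~\ref{t: sem complet gen}. You have merely spelled out in more detail what the paper leaves implicit, in particular the soundness argument for the right-hand inclusion of the sandwich via the intended model $(V,L_\alpha)$; the paper asserts this inclusion without justification.
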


Note that $\Dg^{U}_\Theta(L_\alpha)$ is not deducible 
in $\ZFC$, or any its recursively axiomatized extension,  
unless $\alpha<\omega^{\CK}_1$. Note also that the set 
(of G\"odel's codes of formulas in) $\Th(L)$ is not 
constructible (by Tarski's indefinability theorem).

\begin{q}
Let $\alpha$ be the least $\alpha$ with $L_\alpha\prec L$. 
Is the set $\Dg^{U}_\Theta(L_\alpha)$ less complex than 
$\Th(L)$, in a~sense? E.g., is it constructible? 
If not, what about $\Dg^{U}_\Theta(L_{\omega^{\CK}_1})$? 
\end{q}

\begin{rmk}
Let us make note of two more related things here.

1. 
The existence of $\alpha$ such that $L_\alpha\prec L$, 
and so, of the least such~$\alpha$, 
is unprovable in $\ZFC$ (by the second G\"odel 
incompleteness theorem) but can be easily seen from 
an external point of view (by using the reflection 
theorem schema). By the same reason, the existence of 
the least $\alpha$ such that $L_\alpha\vDash\ZFC$ 
is unprovable in $\ZFC$ but is true externally. 
As stated in \cite{Myhill}, \cite{Shepherdson}, 
the latter model $L_\alpha$~is pointwise definable 
and the least transitive model of~$\ZFC$; a~stronger 
version of constructibility proposed in \cite{Cohen 1963} 
gives exactly this model. 
\hide{CHECK:
The least $\alpha$ such that 
$L_\alpha\vDash\ZFC$ does not exceed the least $\alpha$ 
such that $L_\alpha\prec_{\Sigma_1} L$. 
, and the latter 
set~$L_\alpha$ consists of all sets that are 
$\Sigma_1$-definable in $L$ without parameters, 
see \cite{Barwise 1975}, V, Corollary~7.9(i). 
}

2. 
A~similar argument works for every model 
of set theory that satisfies $V=\HOD$. Moreover
(see \cite{Enayat 2005}, Theorem~2.11 and 
\cite{Hamkins etal 2013}, Observation~5 onwards),
for any $N\vDash\ZF$ we have $\pd(N)\prec\HOD^N$ (since 
$\pd(N)$ is closed under the canonical Skolem functions), 
and for any model~$M$, it is equivalent to be:
\begin{enumerate}
\item[(i)]
a~pointwise definable model of $\ZFC$,
\item[(ii)]
the pointwise definable part of 
a~model of $\ZFC+V=\HOD$,
\item[(iii)]
a~prime model of $\ZFC+V=\HOD$.  
\end{enumerate}
For more on pointwise definable models of set 
theory and close things, we refer the reader to 
\cite{Paris,Enayat 2004,Hamkins etal 2013,Williams}; 
e.g., \cite{Hamkins etal 2013} shows that every 
countable model of $\ZFC$ and indeed of $\NBG$ 
has a~pointwise definable extension. 
Recall also that $V=\HOD$ is consistent with essentially 
all known large cardinals, 
in contrast with the situation for $L$ and, moreover, 
for core models, which have not yet been constructed 
that can have, e.g., supercompact cardinals.
\end{rmk}


\hide{

+++OBSOLETE

We provide an easy corollary of the semantic 
completeness and consider an its application 
(in fact, we use here the completeness result in 
its general, term-free form as the application 
relates to the language without function symbols).

For every model~$\mathfrak A$, the set of its points
definable over $\mathfrak A$ without parameters, if 
nonempty, forms a~submodel (since closed under operations 
of $\mathfrak A$ if they exist). We call this submodel 
the {\em pointwise definable part} of~$\mathfrak A$. 
In general, this part may have or not have the same 
theory as~$\mathfrak A$, and be or not be definable
over~$\mathfrak A$.

\begin{thm}\label{t: pointwise def part}
Let $\T$ be a~theory and  
$
\Theta=\{\theta:\T\vdash
(\theta\text{ is a~uniqueness formula})\}.
$
Assume that the pointwise definable part of any 
model of $\T$ satisfies~$\T$, and that for every 
atomic $\varphi(x_0,\ldots,x_n)$ and 
$\theta_0,\ldots,\theta_n$ in $\Theta$,
$$
\T\vdash\bar\varphi
\;\text{ or }\;
\T\vdash\neg\,\bar\varphi
$$
whenever $\bar\varphi$ is the sentence
$
\exists x_0\ldots\exists x_n\,
(\varphi(x_0,\ldots,x_n)\wedge 
\theta_0(x_0)\wedge\ldots\wedge\theta_n(x_n)).
$
Then $[\T,\vdash_\Theta]$ is the complete theory 
of the pointwise definable part of any model of~$\T$ 
(and thus all these parts are elementarily equivalent).
\end{thm}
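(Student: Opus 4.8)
The plan is to split the statement into two parts: (a)~that the theory $[\T,\vdash_\Theta]$ is complete, and (b)~that for every model $\mathfrak A\vDash\T$ the pointwise definable part $\pd(\mathfrak A)$ is a model of $[\T,\vdash_\Theta]$. Granting both, fix any $\mathfrak A\vDash\T$ and set $\mathfrak M=\pd(\mathfrak A)$. By~(b) we get $[\T,\vdash_\Theta]\subseteq\Th(\mathfrak M)$, and since $[\T,\vdash_\Theta]$ is complete by~(a), a complete theory contained in the theory of a model must equal it; hence $[\T,\vdash_\Theta]=\Th(\pd(\mathfrak A))$. As the left-hand side is independent of $\mathfrak A$, all the theories $\Th(\pd(\mathfrak A))$ with $\mathfrak A\vDash\T$ coincide, i.e., all these parts are elementarily equivalent, as claimed.

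Part~(a) is immediate from Corollary~\ref{c: syn complet gen}: by its very definition $\Theta$ is exactly the set of uniqueness formulas in $(\T,\vdash)$, and the hypothesis that $\T$ decides every $\Theta$-atomic sentence is precisely clause~(ii) of Theorem~\ref{t: syn complet gen}, so (taking $U$ universal) $[\T,\vdash_\Theta]$ is complete.

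For part~(b), fix $\mathfrak A\vDash\T$, put $\mathfrak M=\pd(\mathfrak A)$, and recall $\mathfrak M\vDash\T$ by hypothesis. I would prove by induction on $\vdash_\Theta$-derivations that $\mathfrak M\vDash\varphi$ whenever $\T\vdash_\Theta\varphi$; assuming $\vdash$ is sound (say, ordinary first-order provability), the base theory $\T$ and the usual logical rules cause no trouble, so the only case needing work is an application of the $\Theta$-rule. For this I need the analog of Lemma~\ref{l: S-soundness}, namely that the $\Theta$-rule --- inferring $\forall x\,\varphi(x)$ from the sentences $\exists x\,(\varphi(x)\wedge\theta(x))$, $\theta\in\Theta$ --- preserves validity in $\mathfrak M$. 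Supposing $\mathfrak M\vDash\exists x\,(\varphi(x)\wedge\theta(x))$ for all $\theta\in\Theta$ and taking any $b\in M$ defined in $\mathfrak M$ by some $\theta_b\in\Theta$: since $\mathfrak M\vDash\T$ forces $\theta_b$ to define a unique element and $\mathfrak M\vDash\theta_b(b)$, the witness supplied by the premise for $\theta_b$ is $b$ itself, so $\mathfrak M\vDash\varphi(b)$; as $b$ was arbitrary, $\mathfrak M\vDash\forall x\,\varphi(x)$.

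The main obstacle is the phrase ``every $b\in M$ defined in $\mathfrak M$ by some $\theta_b\in\Theta$'': one must verify that \emph{each} element of $\pd(\mathfrak A)$ is defined by a formula that $\T$ \emph{proves} to be a uniqueness formula, not merely by a formula that happens to isolate it in $\mathfrak A$ (or in $\mathfrak M$). A formula $\delta$ defining $b$ over $\mathfrak A$ need be neither absolute between $\mathfrak M$ and $\mathfrak A$ nor a uniqueness formula across all models of $\T$. I would try to close this gap by applying hypothesis~(1) to arbitrary models of $\T$, so as to exclude spurious realizers of a candidate defining formula elsewhere, and by first securing $\pd(\mathfrak A)\preceq\mathfrak A$ via Lemma~\ref{l: pd}(iii); if this cannot be pushed through in full generality, the safe fallback is Corollary~\ref{c: sem complet gen}, where $\Theta$ is tailored to define exactly the elements of the model in question, which sidesteps the difficulty entirely.
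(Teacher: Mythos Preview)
Your decomposition into (a)~syntactic completeness and (b)~soundness is sound in principle, but it is not the route the paper takes. The paper's proof is shorter: it fixes a model of~$\T$, lets $\mathfrak A$ be its pointwise definable part, and argues directly that for every $\Theta$-atomic sentence~$\bar\varphi$ one has $\T\vdash\bar\varphi$ iff $\mathfrak A\vDash\bar\varphi$ (using $\mathfrak A\vDash\T$ for one direction and the decidability hypothesis for the other). It then invokes the general version of Theorem~\ref{t: sem complet} (essentially Corollary~\ref{c: sem complet gen}) in one stroke, which already packages together the syntactic completeness and the soundness you prove separately. So your argument does strictly more work; the paper lets the Semantic Completeness theorem absorb both halves.

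That said, the ``main obstacle'' you flag is real, and the paper's proof does not address it either. To apply Corollary~\ref{c: sem complet gen} one needs $\pd(\mathfrak A)$ to be pointwise definable \emph{by formulas in~$\Theta$}, i.e., every element of $\pd(\mathfrak A)$ must be the unique witness of some~$\theta$ with $\T\vdash\exists!x\,\theta(x)$. A formula that isolates $b$ in~$\mathfrak A$ need not be provably unique in~$\T$, and neither the hypothesis that $\pd(\mathfrak A)\vDash\T$ nor the decidability of $\Theta$-atomic sentences obviously forces this. Your suggestion to secure $\pd(\mathfrak A)\preceq\mathfrak A$ via Lemma~\ref{l: pd}(iii) does not help: elementarity transfers $\exists!x\,\delta(x)$ between $\mathfrak A$ and $\pd(\mathfrak A)$, not from either to~$\T$. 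So you have correctly located a gap that is present in the paper's argument as well; the honest reading is that the theorem tacitly assumes each element of the pointwise definable part is witnessed by a $\T$-provable uniqueness formula.
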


\begin{proof}
Pick any $\bar\varphi$ of the described form and 
any model of $\T$ with $\mathfrak A$ its pointwise 
definable part. If $\T\vdash\bar\varphi$ 
then $\mathfrak A\vDash\bar\varphi$ since 
$\mathfrak A\vDash\T$ by our assumption. 
If $\T\not\vdash\bar\varphi$ then 
$\T\vdash\neg\,\bar\varphi$ by another our assumption, 
then $\mathfrak A\vDash\neg\,\bar\varphi$ 
again since $\mathfrak A\vDash\T$, and thus
$\mathfrak A\not\vDash\bar\varphi$; so in 
contraposition: if $\mathfrak A\vDash\bar\varphi$
then $\T\vdash\bar\varphi$. Therefore, we have 
$\T\vdash\bar\varphi$ iff $\mathfrak A\vDash\bar\varphi$. 
Now apply (the general version of) 
Theorem~\ref{t: sem complet} to complete the proof. 
\end{proof}


We use this to obtain a~completeness result 
in set theory.

\begin{thm}\label{t: ZFC}
Let 
$
\Theta=\{\varphi:\ZFC\vdash
(\varphi\text{ is a~uniqueness formula})\}.
$
Then 
$$[\ZFC,\vdash_\Theta]=\Th(L).$$
\end{thm}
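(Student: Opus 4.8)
The plan is to derive Theorem~\ref{t: ZFC} from Theorem~\ref{t: pointwise def part} by taking $\T=\ZFC$ and letting $\Theta$ be the set of all uniqueness formulas in $(\ZFC,\vdash)$, so that it suffices to verify the two hypotheses of that theorem and then to identify the resulting theory with $\Th(L)$. The conclusion of Theorem~\ref{t: pointwise def part} will give $[\ZFC,\vdash_\Theta]=\Th(\pd(\mathfrak N))$ for every $\mathfrak N\vDash\ZFC$, and in particular it asserts that all these pointwise definable parts are elementarily equivalent.

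First I would dispose of the first hypothesis, namely that $\pd(\mathfrak N)\vDash\ZFC$ for every $\mathfrak N\vDash\ZFC$. This is exactly the fact recorded in the Remark at the end of this section: for any $\mathfrak N\vDash\ZF$ one has $\pd(\mathfrak N)\prec\HOD^{\mathfrak N}$, because $\pd(\mathfrak N)$ is closed under the canonical definable Skolem functions of $\HOD$, and since $\HOD^{\mathfrak N}\vDash\ZFC$, elementarity yields $\pd(\mathfrak N)\vDash\ZFC$; compare also Lemma~\ref{l: pd}(iii). To identify the common theory with $\Th(L)$ I would instantiate $\mathfrak N=L$: by the G\"odel condensation lemma the transitive collapse of $\pd(L)$ is some $L_\alpha$ with $L_\alpha\prec L$, whence $\Th(\pd(L))=\Th(L_\alpha)=\Th(L)$. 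That each element of $L_\alpha$ is genuinely defined by a formula in $\Theta$, i.e.\ by a provable-uniqueness formula and not merely one unique in $L$, can be arranged by the disjunctive device $\theta(x):=(\psi(x)\wedge\exists!\,y\,\psi(y))\vee(x=\varnothing\wedge\neg\exists!\,y\,\psi(y))$, which is a uniqueness formula in $\ZFC$ and still defines the intended element in $L$.

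The main obstacle is the second hypothesis of Theorem~\ref{t: pointwise def part}: that for every atomic $\varphi$ and all $\theta_0,\dots,\theta_n\in\Theta$ the $\Theta$-atomic sentence
$$
\bar\varphi=\exists x_0\cdots\exists x_n\,
\bigl(\varphi(x_0,\dots,x_n)\wedge\theta_0(x_0)\wedge\cdots\wedge\theta_n(x_n)\bigr)
$$
is decided by $\ZFC$, equivalently that $\Dg_\Theta(L_\alpha)\subseteq[\ZFC,\vdash]$. This is where the whole difficulty concentrates, and it is genuinely delicate for the full family $\Theta$: a provable-uniqueness formula may be built by cases on an independent sentence, for instance $\theta_0(x):=(x=1\wedge\CH)\vee(x=3\wedge\neg\CH)$ together with $\theta_1(x):=(x=2)$, for which the $\Theta$-atomic sentence expressing that the $\theta_0$-element belongs to the $\theta_1$-element is provably equivalent to $\CH$ and hence left undecided by $\ZFC$. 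So the hypothesis cannot be checked verbatim for all of $\Theta$, and to push the argument through one must either restrict $\Theta$ to formulas that provably define the elements of one fixed pointwise definable $L_\alpha$ or, more robustly, adjoin the generalized diagram $\Dg^{U}_\Theta(L_\alpha)$ to the base theory and relativize to a predicate $U$ for $L_\alpha$. This is precisely the passage to the system $\ZFC^+$ and the relativized rule $\vdash^{U}_\Theta$, after which Theorem~\ref{t: sem complet gen} applies directly and delivers the sharp statement of Proposition~\ref{p: rel thy of L}.
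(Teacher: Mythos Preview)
Your approach coincides with the paper's: both invoke Theorem~\ref{t: pointwise def part} with $\T=\ZFC$ and the full set $\Theta$ of provable uniqueness formulas, identify the pointwise definable part via $\pd(\mathfrak N)\prec\HOD^{\mathfrak N}$, and then specialize to a model of~$L$. The paper's version simply asserts that Theorem~\ref{t: pointwise def part} applies, taking the minimal $L_\alpha\vDash\ZFC$, without checking the decidability hypothesis; you have gone further and found that this hypothesis actually \emph{fails}. Your $\CH$-based counterexample is correct and decisive: $\theta_0(x):=(x{=}1\wedge\CH)\vee(x{=}3\wedge\neg\CH)$ and $\theta_1(x):=(x{=}2)$ are both in~$\Theta$, yet the associated $\Theta$-atomic sentence is provably equivalent to~$\CH$ and hence undecided by~$\ZFC$.

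You stop just short of the stronger conclusion, so it is worth making it explicit: the gap is not merely in the proof but in the statement. The $\Theta$-rule is sound in \emph{every} pointwise definable model of $\ZFC$ (your disjunctive trick shows every element of such a model is the witness of some $\theta\in\Theta$), and there exist pointwise definable models of $\ZFC+V{=}\HOD+\neg\CH$ --- any completion of that theory has a prime model, which is pointwise definable since $V{=}\HOD$ supplies definable Skolem functions. Hence $[\ZFC,\vdash_\Theta]$ is contained in the theory of such a model and cannot contain~$\CH$; so $[\ZFC,\vdash_\Theta]\ne\Th(L)$. This is presumably why the paper marks the entire block containing Theorems~\ref{t: pointwise def part} and~\ref{t: ZFC} as \textsc{obsolete} and replaces it with the relativized formulation of Proposition~\ref{p: rel thy of L}, exactly as you diagnose. (A minor additional slip in the paper's obsolete proof: the least $\alpha$ with $L_\alpha\vDash\ZFC$ is \emph{not} the least $\alpha$ with $L_\alpha\prec L$; the two ordinals differ, and only the latter gives $\Th(L_\alpha)=\Th(L)$. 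Your route through $\pd(L)$ avoids this mistake.)
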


\begin{proof}
Let $\alpha$ be the least ordinal such that 
$L_\alpha\vDash\ZFC$ (the existence of such an $\alpha$ 
is unprovable in $\ZFC$ but can be easily shown by 
using the reflection theorem schema). Known facts (stated 
in \cite{Myhill}, \cite{Shepherdson},~\cite{Cohen 1963}) 
are that the model $L_\alpha$~is pointwise definable,
the least transitive model of~$\ZFC$ and the least 
elementary submodel of~$L$. The set $\Theta$ consists 
of the formulas defining elements of~$L_\alpha$. 
Applying Theorem~\ref{t: pointwise def part}, we get 
that $[\ZFC,\vdash_\Theta]$ is the complete theory 
of $L_\alpha$ and thus of~$L$.
\end{proof}

\hide{
[Hamkins notes that such an~$\alpha$ is $\Delta^{1}_{1}$, 
in the sense that so is a~set real encoded an ordering 
of~$\omega$ having the order-type~$\alpha$; see
https://mathoverflow.net/questions/62708/height-of-minimal-model-of-zfc ]
}

The same remains true if we weaken $\ZFC$ to $\ZF^-$, 
the theory without the axioms of choice, regularity, 
and powerset. This is because the definition of $L$ does 
not use these axioms, so $\ZF^-$ has the same least model.

On the other hand, if we let $\T=\ZFC+(V\ne HOD)$, 
the theory $[\T,\vdash_\Theta]$ becomes inconsistent. 
Indeed, otherwise it should be the complete theory 
of a~pointwise definable model of $\ZFC$. However, 
each such a~model satisfies $V=HOD$ (see, e.g., 
\cite{Hamkins etal 2013}, Observation~5). 
For more on pointwise definable and least models 
of set theory, we refer the reader to \cite{Paris}, 
\cite{Enayat 2004},~\cite{Hamkins etal 2013}.

+++
}


\subsection*{Modal logics of restriction rules}

The $\omega$-rule leads to a~system of provability 
logic, the Japaridze polymodal logic $\GLP$. It is 
natural to expand this construction to the general 
case under our consideration and ask about 
the resulting modal logics.

To distinguish between the syntax of modal 
(propositional) logic and predicate logic, 
we use an upright font for the former, with 
$\mathsf p,\mathsf q,\dots$ to denote propositional 
variables and $\upvarphi,\uppsi,\dots$ modal formulas.
Given an ordinal~$\delta$, the language of $\GLP_\delta$
expands that of the language of classical propositional 
logic by including the necessity operators $[\alpha]$ 
for all $\alpha<\delta$; the dual possibility operators 
$\langle\alpha\rangle$ abbreviate $\neg\,[\alpha]\neg$ 
of course.

The axioms of $\GLP_\delta$ are all classical 
tautologies and all modal formulas of one of 
the following forms: 
\begin{enumerate}
\item[(i)] 
$
[\alpha](\upvarphi\to\uppsi)\to
([\alpha]\upvarphi\to[\alpha]\uppsi),
$
\item[(ii)] 
$
[\alpha]([\alpha]\upvarphi\to\upvarphi)\to 
[\alpha]\upvarphi,
$
\item[(iii)] 
$
[\alpha]\upvarphi\to[\beta]\upvarphi
$
if $\alpha<\beta<\delta$,
\item[(iv)] 
$
\langle\alpha\rangle\upvarphi\to
[\beta]\langle\alpha\rangle\upvarphi
$
if $\alpha<\beta<\delta$;
\end{enumerate}
the rules of inference are modus ponens
and $[0]$-necessitation (deriving 
$[0]\upvarphi$ from~$\upvarphi$).

Let $\T$ be a~theory, $\vdash$ a~provability predicate, 
$\mathfrak A$ a~model pointwise definable by
$\Theta=\{\theta_a:a\in A\}$, and $U$~a~unary 
predicate symbol. Assume that 
$\T\vdash\forall x\,(\theta_a(x)\to U(x))$ 
for all $a\in A$, and 
$
\Dg^{U}_\Theta(\mathfrak A)\subseteq
\{\sigma^U:\T\vdash\sigma\}\subseteq\Th^U(\mathfrak A).
$ 
By Theorem~\ref{t: sem complet gen}, 
every sentence that is true in $\mathfrak A$ is 
of form $\sigma^U$ for some $\sigma$ derived by 
$\vdash^{U}_\Theta$ from~$\T$. 
Define an increasing $\delta$-sequence of theories 
(in the same vocabulary) where each theory indexed 
by a~successor ordinal is obtained from its predecessor  
by a~single application of the $(\Theta,U)$-rule, and 
each theory indexed by a~limit ordinal is the union of 
all theories with smaller indices (as pointed out above,  
such sequences first appeared in~\cite{Rosser 1937}): 
\begin{align*}
\T_0&:=\T,
\\
\T_{\alpha+1}&:=\T_\alpha\cup
\bigl\{(\forall x\,\varphi(x))^U:
\{(\exists x\,(\varphi(x)\wedge\theta(x)))^U:
\theta\in\Theta\}
\subseteq[\T_{\alpha},\vdash] 
\bigr\},
\\
\T_{\alpha}&:=\bigcup_{\beta<\alpha}\T_{\beta} 
\text{ if $\alpha$ is a~limit ordinal}.
\end{align*}

Assume that $\T$ is strong enough to encode 
its syntax and the provability predicate (as, 
e.g., $\PA$ or $\ZFC$), and let $\Pr_\alpha(x)$ 
encode ``$x$~is the code of a~sentence provable 
in~$\T_\alpha$'', for each $\alpha<\delta$. 
Then we define an~{\em interpretation} 
of the language of $\GLP_\delta$ in the language 
of~$\T$ as any function~${}^*$ that sends each 
propositional variable~$\mathsf p$ of the language 
of $\GLP_\delta$ to a~sentence $\mathsf p^*$ of 
the language of~$\T$, and then extends to all 
modal formulas 
by stipulating that ${}^*$ commutes with 
the Boolean connectives, and that 
$([\alpha]\upvarphi)^*$ is 
$\Pr_\alpha(\ulc\upvarphi^*\urc)$, where 
$\ulc\psi\urc$ stands for the code of~$\psi$.

For the $\omega$-rule, i.e., if $\mathfrak A$ is the 
standard model of arithmetic and $U$~the universal 
relation, 
it has been shown by Japaridze~\cite{Japaridze} 
that $\GLP_\omega$ is sound and complete for~$\PA$, 
in the sense that a~formula $\upvarphi$ is provable 
in $\GLP_\omega$ iff, for every interpretation~${}^*$, 
the sentence $\upvarphi^*$ is provable in~$\PA$; 
see also~\cite{Beklemishev}. Boolos~\cite{Boolos} 
obtained an analogous result for $\GLP_2$
and second-order arithmetic. 
Ignatiev~\cite{Ignatiev} provided general conditions 
on abstract provability-like predicates~$\Pr_n$ under 
which $\GLP_\omega$ is sound and complete for~$\PA$. 
For recent studies of $\GLP$ see also 
\cite{Beklemishev,Beklemishev Gabelaia}.
Let us also mention that the approach developed in  
\cite{Saveliev Shapirovsky 2019} allows one to obtain 
proof-theoretic logics like $\GLP$ and model-theoretic
logics in a~unified way 
(see \cite{Saveliev Shapirovsky 2019}, Example~1).

\begin{q}
For which theories~$\T$, pointwise definable 
models $\mathfrak A$ of~$\T$, and $(\Theta,U)$-rules, 
are the modal logics $\GLP_\delta$ sound and complete? 
In general, what ($\GLP_\delta$-like) logics arises 
from various such $\T$, $\mathfrak A$, and restriction 
rules? 
\end{q}


\hide{
\subsection*{Realization rules}

[or: inflation rules? saturation rules?]

Looking in the opposite direction, we can wonder about 
rules leading rather to saturated than to prime models. 
Here we only briefly mention such a~possibility.

\vskip+1em
[Note that $\ZFC$ proves that a~saturated model of $\ZF$ 
of cardinality~$\kappa$ exists iff $\ZF$~is consistent 
and $\kappa^{<\kappa}=\kappa>\omega$, see...? 
It seems, moreover, the statement that every model 
has a~saturated elementary extension is equivalent to 
the existence of a~proper class of cardinals~$\kappa$ 
such that $\kappa^{<\kappa}=\kappa>\omega$ 
(and hence, it is not provable in ZFC since 
$\kappa^{<\kappa}=\kappa>\omega$ clearly means that either 
$\kappa$~is inaccessible or $\kappa=\lambda^+=2^\lambda$
for some~$\lambda$). Reference??]
\vskip+1em

Given a~regular cardinal~$\kappa$ and a~vocabulary~$\tau_0$, 
let $C_{\alpha+1}$ consist of constant symbols~$c_\Phi$ 
for each 
$\Phi\in\mathscr P_\kappa(\mathscr L(\tau_\alpha))$, and 
let $\tau_{\alpha+1}:=\tau_\alpha\cup C_{\alpha+1}$ and 
$\tau_\alpha:=\bigcup_{\beta<\alpha}\tau_\alpha$ 
if $\alpha$ is a~limit ordinal. 
The $\kappa$-{\em realization rule} is the schema 
consisting of the rules
\hide{
$$
\frac
{
\bigl\{\exists x\bigwedge\Phi_0(x):
\Phi_0\in\mathscr P_\omega(\Phi(x))\bigr\}
}
{\Phi(c_\Phi)}.
$$ 
}
$$
\frac
{
\bigl\{
\exists x\,
(\varphi_0(x)\wedge\ldots\wedge\varphi_{n-1}(x)):
\varphi_i\in\Phi(x),i<n<\omega
\bigr\}
}
{\Phi(c_\Phi)}
$$ 
for each 
$\Phi\in\mathscr P_\kappa(\mathscr L(\tau_\kappa))$.  
[Does every model of a~theory closed under this rule 
be $\kappa$-saturated? check up!]

}


\section{Restriction rules, higher logics}

In this section, we discuss generalizations of 
restriction rules to higher logics. Mainly we consider 
infinitary logics, and only briefly second-order logic.

\subsection*{Infinitary logics}
Given cardinals $\kappa\ge\lambda$, the first-order 
infinitary logic $\mathscr L_{\kappa,\lambda}$ involves 
propositional connectives of arity~${<}\kappa$ and 
quantifiers bounding~${<}\lambda$ variables. 
As will soon be seen, these logics admit a~rather 
straightforward generalization of our previous observations.

\hide{
Checking the proof of Theorem~\ref{t: syn complet}, 
we may note that in order to generalize it to 
$\mathscr L_{\kappa,\lambda}$ we need, besides 
suitable restriction rules, only the following primitive 
list of infinitary axioms and rules:
\begin{enumerate}
\item[(i)] 
the {\em conjunction axiom} $\bigwedge\Phi\to\varphi$
for $|\Phi|<\kappa$ and $\varphi\in\Phi$,  
\item[(ii)] 
the {\em conjunction rule} ${\Phi}/{\bigwedge\Phi}$
for $|\Phi|<\kappa$, and 
\item[(iii)] 
the {\em generalization rule} 
${\varphi}/\,{\bigforall X\varphi}$
for $|X|<\lambda$. 
\end{enumerate}
(Such a~deductive system is called {\em basic}, it does not 
include more complex axioms and rules related to the axiom 
of choice; see \cite{Karp 1964} and Appendix~C in 
\cite{Dickmann 1975} for more on this.)
}


Let us recall common Hilbert-style deductive systems 
for the language $\mathscr L_{\kappa,\lambda}$. 
(In the formulas below, the indices of any connective are 
assumed to be bounded by a~positive ordinal~$<\kappa$, 
and of any quantifier, by an ordinal~$<\lambda$.)
First consider the system in which axioms consist of 
five propositional axiom schemas
\begin{enumerate}
\item[(i)] 
$
\varphi\to(\psi\to\varphi),
$
\item[(ii)] 
$
(\varphi\to(\psi\to\chi))\to
((\varphi\to\psi)\to(\varphi\to\chi)),
$
\item[(iii)] 
$
(\neg\,\varphi\to\neg\,\psi)\to(\psi\to\varphi), 
$
\item[(iv)] 
$
\bigwedge_{\alpha<\gamma}(\varphi\to\varphi_\alpha)\to 
(\varphi\to\bigwedge_{\alpha<\gamma}\varphi_\alpha),
$
\item[(v)] 
$ 
\bigwedge_{\alpha<\gamma}\varphi_\alpha
\to\varphi_\beta,
$ 
\end{enumerate}
two quantifier axiom schemas
\begin{enumerate}
\item[(i)]
$
\bigforall_{\beta<\delta}x_\beta\,
(\varphi\to\psi) 
\to
\bigl(
\varphi\to\bigforall_{\beta<\delta}x_\beta\,\psi
\bigr), 
$
provided that $\varphi$ has 
no free occurrence of $x_\beta$, 
\item[(ii)] 
$
\bigforall_{\beta<\delta}x_\beta\,
\varphi(x_\beta)_{\beta<\delta}
\to
\varphi(t_\beta)_{\beta<\delta}
$
provided that $\varphi$ has 
no free occurrence of $x_\beta$ 
within the scope of a~quantifier 
binding some variable of~$t_\beta$,  
\end{enumerate}
and two equality axiom schemas
\begin{enumerate}
\item[(i)]
$t=t$ for all terms~$t$,
\item[(ii)] 
$
\bigwedge_{\alpha<\gamma}(s_\alpha=t_\alpha)\to
(
\varphi(s_\alpha)_{\alpha<\gamma}
\lra
\varphi(t_\alpha)_{\alpha<\gamma}
),
$
provided that no variables in all terms 
$s_\alpha,t_\alpha$ is bound in~$\varphi$,
\end{enumerate}
while rules consist of two propositional rule schemas
\begin{enumerate}
\item[(i)] 
{\em Modus ponens}
$
\displaystyle
\frac{\{\varphi,\varphi\to\psi\}}{\psi}, 
$
\item[(ii)]
{\em Conjunction}
$
\displaystyle
\frac{\{\varphi_\alpha:\alpha<\gamma\}}
{\bigwedge_{\alpha<\gamma}\varphi_\alpha}, 
$
\end{enumerate}
and one quantifier rule schema
%
\begin{enumerate}
\item[] 
{\em Generalization}
$
\displaystyle
\frac{\varphi}
{\;
\bigforall_{\alpha<\gamma}x_\alpha\,\varphi
\,}, 
$
\end{enumerate}
The deductive system formed by these axioms and rules is 
called {\em basic} (for $\mathscr L_{\kappa,\lambda}$).

Consider also the following additional axioms and rules:
Chang's distributivity axiom schema 
\begin{enumerate}
\item[(D$_\gamma$)]
{\em Distributivity}
$
\bigvee_{\alpha<\gamma}
\bigwedge_{\beta<\gamma}
\varphi_{\alpha,\beta},
$
where for any $(\varphi_\delta)_{\delta<\gamma}$  
there is $\varepsilon<\gamma$ with 
$
\varphi_{\alpha,\beta}\in 
\{\varphi_\varepsilon,\neg\,\varphi_\varepsilon\},
$ 
and for any $f:\gamma\to\gamma$ 
there is $\varepsilon<\gamma$ with 
$
\{\varphi_\varepsilon,\neg\,\varphi_\varepsilon\}
\subseteq
\{\varphi_{\alpha,f(\alpha)}:\alpha<\gamma\},
$
\end{enumerate}
and two choice rule schemas
\begin{enumerate} 
\item[(C$_\kappa$)] 
{\em Independent choice}
$
\displaystyle
\frac{
\bigvee_{\alpha<\gamma}\varphi_\alpha
}
{\:
\bigvee_{\alpha<\gamma}
\bigforall_{\beta<\delta_\alpha}x_{\alpha,\beta}\,
\varphi_\alpha
\,}, 
$
provided that all the variables $x_{\alpha,\beta}$ 
are pairwise distinct and no $x_{\alpha,\beta}$ 
occurs freely in $\varphi_\varepsilon$ unless 
$\alpha=\varepsilon$.
\item[(DC$_\kappa$)] 
{\em Dependent choice} 
$
\displaystyle
\frac{
\bigvee_{\alpha<\gamma}\varphi_\alpha
}
{\:
\bigforall_{\beta<\delta_0}\!x_{0,\beta}\,
\varphi_0
\vee
\bigvee_{0<\alpha<\gamma}
\bigexists_{\beta<\delta_\alpha}\!y_{\alpha,\beta}
\bigforall_{\beta<\delta'_\alpha}\!x_{\alpha,\beta'}\,
\varphi_\alpha
\,}, 
$
provided that all the variables $x_{\alpha',\beta'}$ 
and $y_{\alpha,\beta}$ are pairwise distinct, 
no $x_{\alpha,\beta'}$ occurs freely in 
$\varphi_\varepsilon$ if $\varepsilon<\alpha$, and 
all free variables of $\varphi_\alpha$, as well as 
all $x_{\varepsilon,\beta'}$ with $\varepsilon<\alpha$, 
are in $\{y_{\alpha,\beta}:\beta<\delta_\alpha\}$.
\end{enumerate}

\hide{
For brevity, we denote this Chang's distributivity 
schema by $\mathrm{D}_\kappa$, and these two choice 
schemas, by $\mathrm{C}_\kappa$ and $\mathrm{DC}_\kappa$, 
respectively. 
}
Let us say that the deductive system for 
$\mathscr L_{\kappa,\lambda}$ is {\em weak standard}, 
respectively, {\em standard} iff it consists of 
the basic system augmented by $\mathrm{D}_\gamma$ and by 
$\mathrm{C}_\gamma$, respectively, $\mathrm{DC}_\gamma$, 
for all $\gamma<\kappa$.


Assuming that a~deductive system of a~given language 
$\mathscr L$ is consistent, let us call it {\em complete} 
iff the set of deducible formulas of $\mathscr L$ 
coincides with the set of its formulas that are valid, 
i.e., satisfied on all models of~$\mathscr L$,  
symbolically, $\vdash\varphi$ iff $\vDash\varphi$, 
and {\em strongly complete} iff for any theory~$\T$ 
of~$\mathscr L$, the set of formulas deducible in $\T$ 
coincides with the set of formulas valid in all models 
of~$\T$, i.e., 
$\T\vdash\varphi$ iff $\T\vDash\varphi$.

\hide{
The following and some other completeness results are 
due to Karp, who also established that the principles 
added to basic system cannot be weakened without loss 
of completeness (see~\cite{Karp 1964}):
The basic deductive system of $\mathscr L_{\omega_1,\omega}$
is complete;
the weak standard system of $\mathscr L_{\kappa,\omega}$ 
is complete whenever $\kappa$ is inaccessible;
the standard system of $\mathscr L_{\kappa^+,\lambda}$ 
is complete whenever $\kappa^{<\lambda}=\kappa$;
the standard system of $\mathscr L_{\kappa,\lambda}$ 
is complete whenever $\kappa$ is weakly inaccessible 
and $\gamma^\delta<\kappa$ for all $\gamma<\kappa$ and 
$\delta<\lambda$, in particular, whenever $\kappa$ is 
inaccessible.
}

The following and some other completeness results are 
due to Karp, who also established that the principles 
added to basic system cannot be weakened without loss 
of completeness (the reader may consult \cite{Karp 1964} 
for details or \cite{Dickmann 1975}, App.~C,~I
for a~short overview):
\begin{enumerate}
\item[(i)] 
the basic deductive system of 
$\mathscr L_{\omega_1,\omega}$ is complete;
\item[(ii)] 
the weak standard system of $\mathscr L_{\kappa,\omega}$ 
is complete if $\kappa$ is inaccessible;
\item[(iii)] 
the standard system of $\mathscr L_{\kappa^+,\lambda}$ 
is complete if $\kappa^{<\lambda}=\kappa$;
\item[(iv)] 
the standard system of $\mathscr L_{\kappa,\lambda}$ 
is complete if $\kappa$ is weakly inaccessible 
and $\gamma^\delta<\kappa$ for all $\gamma<\kappa$ and 
$\delta<\lambda$, in particular, if $\kappa$ is 
inaccessible.
\end{enumerate}

This implies that in the cases of (i)--(iv),  
the set $\Val(\mathscr L_{\kappa,\lambda}(\in))$ 
of (codes of) formulas of 
$\mathscr L_{\kappa,\lambda}(\in)$ 
that are valid, i.e., true in all models of~$\{\in\}$, 
is $\mathscr L_{\omega,\omega}(\in)$-definable 
over the coding structure $(H_\kappa,\in)$.  
More precisely, it is $\Sigma_2$-definable, and 
if $\kappa$~is inaccessible, even  
$\Sigma_1$-definable over $(H_\kappa,\in)$; 
here $\Sigma_n$ refers to L{\'e}vy's hierarchy 
of set-theoretical formulas.

Recall that a~theory~$\T$ is $\kappa$-{\em consistent} 
iff any subtheory $\T_0$ in $\mathscr P_\kappa(\T)$ is 
consistent, and that $\kappa$~is {\em strongly compact} 
iff for any vocabulary~$\tau$ and theory $\T$ of 
$\mathscr L_{\kappa,\kappa}(\tau)$, if $\T$ is 
$\kappa$-consistent then it is consistent. 
Immediately from this definition and items (ii) and~(iv) 
above we obtain examples of strongly complete logics:%
\footnote{
Despite the following weird claim in \cite{Dickmann 1975}, 
p.~419: ``This last remark [saying that 
$\mathscr L_{\omega_1,\omega}$ does not admit 
a~strongly complete deductive system] also applies 
to any language $\mathscr L_{\kappa,\lambda}$''.
}
\begin{enumerate}
\item[(v)] 
the weak standard system of $\mathscr L_{\kappa,\omega}$ 
is strongly complete if $\kappa$ is strongly compact,
\item[(vi)] 
the standard system of $\mathscr L_{\kappa,\lambda}$ 
is strongly complete if $\kappa$ is strongly compact.
\end{enumerate}

In contrast to these completeness results, 
Scott's undefinability theorem states that 
the set $\Val(\mathscr L_{\kappa^+,\kappa^+}(\in))$ 
is not definable over $(H_{\kappa^+},\in)$
even by any formula of the language 
$\mathscr L_{\kappa^+,\kappa^+}(\in)$ itself, 
thus showing the language does not admit 
any reasonably definable complete deductive system 
(see \cite{Karp 1964} or 
\cite{Dickmann 1975}, App.~C,~II).


In order to formulate variants of restriction rules 
that would be appropriate for languages 
$\mathscr L_{\kappa,\lambda}$, note first that 
if $\lambda>\omega$, such rules must take into account 
infinitely many parameters in the formulas. 
As a~simple particular case, the (non-relativized) term 
form of restriction rule schema given by a~set~$S$ of 
terms would state:
$$
\frac
{
\bigl\{
\varphi(t_\alpha)_{\alpha<\gamma}:
(t_\alpha)_{\alpha<\gamma}\in S^\gamma
\bigr\}
}
{
\bigforall_{\alpha<\gamma}x_\alpha\,
\varphi(x_\alpha)_{\alpha<\gamma}
},
$$ 
for all formulas~$\varphi$ in an arbitrary number 
$\gamma<\lambda$ of parameters. 
In the general setting, if $U$ is a~unary predicate 
and $\Theta$ a~set of uniqueness formulas in a~system 
$(\T,\vdash)$ under consideration, 
%
the {\em $(\Theta,U)$-restriction rule} is the schema of 
rules that, for each formula~$\varphi$ in free variables 
$(x_\alpha)_{\alpha<\gamma}$, $\gamma<\lambda$, states:
$$
\frac
{\bigl\{
\bigl(
\bigexists_{\alpha<\gamma}x_\alpha\,
\bigl(
\bigwedge_{\alpha<\gamma}\theta_\alpha(x_\alpha)
\wedge\varphi(x_\alpha)_{\alpha<\gamma}
\bigr)
\bigr)^U:
(\theta_\alpha)_{\alpha<\gamma}\in\Theta^\gamma
\bigr\}}
{\bigl(
\bigforall_{\alpha<\gamma}x_\alpha\,
\varphi(x_\alpha)_{\alpha<\gamma}
\bigr)^U}.
$$

Now we are going to generalize the results above, 
Theorems \ref{t: syn complet gen} 
and~\ref{t: sem complet gen}, allowing to obtain, for 
a~given pointwise $\mathscr L_{\kappa,\lambda}$-definable 
model, its complete $\mathscr L_{\kappa,\lambda}$-theory.  
Surprisingly, such a~generalization is very immediate,   
holds for any infinitary language even it is highly 
incomplete like $\mathscr L_{\kappa^+,\kappa^+}$, 
and does not requires nothing but the basic deductive 
system of the language under consideration, to which 
we add the corresponding restriction rule.

For this, first observe the following fact.

\begin{lmm}\label{l: commuting with uniqueness}
Let $\T$ be a~theory and $\vdash$ the basic provability 
of $\mathscr L_{\kappa,\lambda}$, and let 
$\gamma<\kappa$ and $\delta<\lambda$. 
If $(\T,\vdash)$ proves that 
$(\theta_\beta)_{\beta<\delta}$ 
consists of uniqueness formulas, then 
it proves the following equivalences, 
for all formulas $\varphi,\varphi_\alpha$: 
\begin{enumerate}
\item[(i)]
$
\bigexists_{\beta<\delta}\!x_\beta\,
\bigl(
\bigwedge_{\beta<\delta}\theta_\beta(x_\beta)
\wedge
\neg\,
\varphi
\bigr) 
\lra
\neg
\bigexists_{\beta<\delta}\!x_\beta\,
\bigl(
\bigwedge_{\beta<\delta}\theta_\beta(x_\beta)
\wedge
\varphi
\bigr), 
$
\item[(ii)]
$
\bigexists_{\beta<\delta}\!x_\beta\,
\bigl(
\bigwedge_{\beta<\delta}\theta_\beta(x_\beta)
\wedge
\bigwedge_{\alpha<\gamma}
\varphi_\alpha
\bigr) 
\lra
\bigwedge_{\alpha<\gamma}
\bigexists_{\beta<\delta}\!x_\beta\,
\bigl(
\bigwedge_{\beta<\delta}\theta_\beta(x_\beta)
\wedge
\varphi_\alpha
\bigr), 
$
\item[(iii)] 
$
\bigexists_{\beta<\delta}\!x_\beta\,
\bigl(
\bigwedge_{\beta<\delta}\theta_\beta(x_\beta)
\wedge
\bigforall_{\alpha<\gamma}\!y_\alpha\,
\varphi
\bigr) 
\lra
\bigforall_{\alpha<\gamma}\!y_\alpha
\bigexists_{\beta<\delta}\!x_\beta\,
\bigl(
\bigwedge_{\beta<\delta}\theta_\beta(x_\beta)
\wedge
\varphi
\bigr). 
$
\end{enumerate} 
Consequently, for any formula~$\psi$, the part 
$ 
\bigexists_{\beta<\delta}\!x_\beta\,
\bigl(
\bigwedge_{\beta<\delta}\theta_\beta(x_\beta)
\wedge\ldots)
$ 
of the formula $\bar\psi$ can be 
pushed down inside, or brought out it.
\end{lmm}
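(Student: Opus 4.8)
The plan is to factor all three equivalences through a single formula operator. Write $E\psi$ for the formula $\bigexists_{\beta<\delta}x_\beta(\bigwedge_{\beta<\delta}\theta_\beta(x_\beta)\wedge\psi)$ and $A\psi$ for $\bigforall_{\beta<\delta}x_\beta(\bigwedge_{\beta<\delta}\theta_\beta(x_\beta)\to\psi)$; then (i)--(iii) read $E(\neg\varphi)\lra\neg E\varphi$, $E(\bigwedge_{\alpha<\gamma}\varphi_\alpha)\lra\bigwedge_{\alpha<\gamma}E\varphi_\alpha$, and $E(\bigforall_{\alpha<\gamma}y_\alpha\,\varphi)\lra\bigforall_{\alpha<\gamma}y_\alpha\,E\varphi$. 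We may assume each $\theta_\beta$ has only its displayed variable free and that the $y_\alpha$ are distinct from all $x_\beta$. First I would record, inside the basic system, two consequences of the hypothesis: tuple uniqueness, $\T\vdash\bigforall_{\beta<\delta}x_\beta\bigforall_{\beta<\delta}x'_\beta((\bigwedge_\beta\theta_\beta(x_\beta)\wedge\bigwedge_\beta\theta_\beta(x'_\beta))\to\bigwedge_\beta x_\beta=x'_\beta)$, obtained from the $\delta$ individual uniqueness statements by finitary logic and one use of the conjunction rule over $\beta<\delta$; and joint existence, $\T\vdash\bigexists_{\beta<\delta}x_\beta\bigwedge_{\beta<\delta}\theta_\beta(x_\beta)$.

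The heart of the matter is the equivalence $\T\vdash E\psi\lra A\psi$, for every formula $\psi$. The direction $E\psi\to A\psi$ needs only tuple uniqueness: given a witness of $E\psi$ and an arbitrary tuple satisfying $\bigwedge_\beta\theta_\beta$, tuple uniqueness identifies them coordinatewise, the equality axiom transports $\psi$ across these equalities (after the usual renaming so its side condition holds), and a generalization yields $A\psi$. The direction $A\psi\to E\psi$ is where joint existence is used: instantiate the universal block of $A\psi$ at a tuple furnished by joint existence. To this I would add a few purely logical facts, all available by the routine propositional and quantifier reasoning of the basic system (in particular, replacement of provably equivalent subformulas): the duality $\neg E\psi\lra A(\neg\psi)$ (simply $\neg\bigexists\bar x(P\wedge\psi)\lra\bigforall\bar x(P\to\neg\psi)$), monotonicity of $E$, the commutation of $A$ with $\bigwedge_\alpha$ (propositional axiom (iv) together with the distribution of $\bigforall$ over $\bigwedge$), and the commutation of $A$ with an outer block $\bigforall_{\alpha<\gamma}y_\alpha$ (quantifier axiom (i), using that no $y_\alpha$ is free in any $\theta_\beta$, plus the interchange of universal quantifier blocks). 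Granting all this, the three claims drop out by bookkeeping: (i) is $E(\neg\varphi)\lra A(\neg\varphi)\lra\neg E\varphi$; for (ii), the direction $\to$ is monotonicity of $E$ followed by the conjunction rule and $\leftarrow$ is $\bigwedge_\alpha E\varphi_\alpha\lra\bigwedge_\alpha A\varphi_\alpha\lra A(\bigwedge_\alpha\varphi_\alpha)\lra E(\bigwedge_\alpha\varphi_\alpha)$; and (iii) is $E(\bigforall_\alpha y_\alpha\,\varphi)\lra A(\bigforall_\alpha y_\alpha\,\varphi)\lra\bigforall_\alpha y_\alpha\,A\varphi\lra\bigforall_\alpha y_\alpha\,E\varphi$. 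The closing assertion of the lemma is then just the iteration of (i)--(iii) down the construction of the formula carried by the $\bigexists_\beta x_\beta(\bigwedge_\beta\theta_\beta(x_\beta)\wedge\cdot)$ prefix.

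The step I expect to be the genuine obstacle is joint existence: merging $\delta$ separate existence clauses into one $({<}\lambda)$-ary existential block is, for arbitrary formulas, precisely the job of the choice rules $\mathrm C_\kappa$ and $\mathrm{DC}_\kappa$, whereas here we are confined to the basic system. What rescues us is that uniqueness makes choice superfluous --- there is a canonical witness. I would make this precise exactly as in Lemma~\ref{l: term vs uniq}(ii): pass to the conservative extension obtained by adjoining the definite descriptions $\iota x\,\theta_\beta(x)$, in which $\bigwedge_\beta\theta_\beta(\iota x\,\theta_\beta(x))$ holds by the defining property of $\iota$ and the conjunction rule, so that the closed tuple $(\iota x\,\theta_\beta(x))_{\beta<\delta}$ witnesses $\bigexists_\beta x_\beta\bigwedge_\beta\theta_\beta(x_\beta)$ through the existential form of quantifier axiom (ii); conservativity then returns this $\iota$-free sentence in $\T$ itself. (If one wants to bypass $\iota$ altogether, one can instead observe that in each $\leftarrow$ half of (i)--(iii) the instance of joint existence actually needed is supplied on the spot --- for (ii), by the hypothesis conjunct $E\varphi_0$ --- but the definite-description route is uniform and cleaner.)
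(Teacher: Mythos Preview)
Your proposal is correct and follows the same high-level idea as the paper's proof, which is a two-line sketch: ``the $(\to)$ parts use $\forall x\forall y\,(\theta_\beta(x)\wedge\theta_\beta(y)\to x=y)$ while the $(\leftarrow)$ parts use $\exists x\,\theta_\beta(x)$.'' Your organization through the single equivalence $E\psi\lra A\psi$ is cleaner and makes the bookkeeping transparent; in particular your observation that the $(\to)$ half of (ii) needs neither uniqueness nor existence (just monotonicity of $E$ plus the conjunction rule) is sharper than the paper's blanket attribution.

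More to the point, you have identified and dealt with a genuine subtlety that the paper passes over in silence: deriving \emph{joint} existence $\bigexists_{\beta<\delta}x_\beta\bigwedge_{\beta<\delta}\theta_\beta(x_\beta)$ from the $\delta$ individual existence clauses is, for infinite $\delta$, precisely the kind of step that the choice rules $\mathrm C_\kappa,\mathrm{DC}_\kappa$ are designed to supply, and we are working in the basic system where they are absent. Your $\iota$-term workaround --- uniqueness makes choice superfluous because the witnesses are canonical --- is the right idea and parallels the paper's own Lemma~\ref{l: term vs uniq}(ii) in the finitary setting; one should just note that conservativity of definite descriptions over the basic $\mathscr L_{\kappa,\lambda}$-system is being used here and is not literally covered by the Hilbert--Bernays reference. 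Your parenthetical alternative (that in the actual inductive use of the lemma a witnessing $E$-formula is always already in hand) is also a valid route that sidesteps the issue entirely.
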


\begin{proof}
The ($\to$)~parts of each of (i)--(iii) use that 
$
\forall x\forall y\,
(\theta_\beta(x)\wedge\theta_\beta(y)\to x=y)
$
while their ($\leftarrow$)~parts use that 
$\exists x\,\theta_\beta(x)$ 
whenever $\theta_\beta$~is a~uniqueness formula. 
\end{proof}



\begin{thm}[Syntactic Completeness]%
\label{t: syn complet inf}
Let $\kappa\ge\lambda\ge\omega$ be cardinals, 
$\T$ a~theory and $\vdash$ the basic provability 
of $\mathscr L_{\kappa,\lambda}$, 
$\Theta$ a~set of uniqueness formulas in $(\T,\vdash)$, 
and $U$~a~unary predicate symbol. Assume that 
\begin{enumerate}
\item[(i)] 
$\T\vdash\forall x\,(\theta(x)\to U(x))$
for all $\theta\in\Theta$, and 
\item[(ii)] 
for every atomic formula~$\varphi$ in $n+1$~variables 
and $\theta_0,\ldots,\theta_n\in\Theta$,
$$
\T\vdash\bar\varphi
\;\text{ or }\;
\T\vdash\neg\,\bar\varphi
$$
where $\bar\varphi$ is the sentence 
$
\exists x_0\ldots\exists x_n\,
(\varphi(x_0,\ldots,x_n)\wedge
\theta_0(x_0)\wedge\ldots\wedge\theta_n(x_n)).
$
\end{enumerate}
Then for every sentence~$\sigma$,
$$
\T\vdash^{U}_\Theta\sigma^{U}
\;\text{ or }\;\
\T\vdash^{U}_\Theta\neg\,\sigma^{U},
$$
i.e., the theory $\{\sigma^U:\T\vdash^{U}_\Theta\sigma\}$ 
is a~complete theory relativized to~$U$.
\end{thm}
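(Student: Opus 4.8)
The statement is the $\mathscr L_{\kappa,\lambda}$-analogue of Theorem~\ref{t: syn complet gen}, and the plan is to prove it by the same induction on the build-up of a formula, with Lemma~\ref{l: commuting with uniqueness} supplying, for the infinitary connectives and quantifiers, the manipulations that were trivial substitutions in the finitary case. For each formula $\psi$ whose free variables we enumerate as $(v_\alpha)_{\alpha<\gamma}$ with $\gamma<\lambda$, and for each tuple $(\theta_\alpha)_{\alpha<\gamma}\in\Theta^\gamma$, let $\bar\psi$ be the associated sentence, namely (the $U$-relativization of) $\bigexists_{\alpha<\gamma}v_\alpha\bigl(\bigwedge_{\alpha<\gamma}\theta_\alpha(v_\alpha)\wedge\psi\bigr)$; for atomic $\psi$ this is the sentence of hypothesis~(ii). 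The claim to be proved by induction is that $\T\vdash^{U}_\Theta\bar\psi$ or $\T\vdash^{U}_\Theta\neg\,\bar\psi$ for every $\psi$ and every such tuple. Granting this, the theorem is the instance $\gamma=0$, in which $\psi$ is the sentence $\sigma$ and $\bar\psi$ is $\sigma^{U}$.

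The atomic base case is immediate from hypothesis~(ii), together with~(i) to absorb the $U$-relativization, exactly as in Theorem~\ref{t: syn complet gen}. For negation, Lemma~\ref{l: commuting with uniqueness}(i) gives $\T\vdash^{U}_\Theta\overline{\neg\,\psi}\lra\neg\,\bar\psi$, so deciding $\bar\psi$ decides $\overline{\neg\,\psi}$. For an infinitary conjunction $\psi=\bigwedge_{\beta<\gamma'}\varphi_\beta$, Lemma~\ref{l: commuting with uniqueness}(ii) gives $\T\vdash^{U}_\Theta\bar\psi\lra\bigwedge_{\beta<\gamma'}\overline{\varphi_\beta}$; by the inductive hypothesis each $\overline{\varphi_\beta}$ is decided, and if all are provable we obtain $\bar\psi$ by the infinitary conjunction rule, while if some $\overline{\varphi_\beta}$ is refutable we obtain $\neg\,\bar\psi$ by modus ponens from the contrapositive of the conjunction axiom~(v). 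For an infinitary universal quantifier $\psi=\bigforall_{\beta<\gamma'}x_\beta\,\varphi$, Lemma~\ref{l: commuting with uniqueness}(iii) lets us pull the uniqueness prefix of $\bar\psi$ past the quantifier block, so that $\bar\psi$ is provably equivalent to $\bigl(\bigforall_{\beta<\gamma'}x_\beta\,\chi\bigr)^{U}$, where $\chi$ denotes $\varphi$ with the uniqueness prefix for the free variables of $\psi$ prepended. The free variables of $\chi$ are exactly the $x_\beta$, and the sentences $\bar\chi$ built from $\chi$ by the same construction are precisely the premises of the $(\Theta,U)$-rule with $\chi$ in the role of the rule's formula. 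By the inductive hypothesis each such $\bar\chi$ is decided: if all are provable, one application of the $(\Theta,U)$-rule yields $\bar\psi$; if some $\bar\chi$ fails, then $\neg\,\bar\chi$ is provable, and combining this with the provable existence of the corresponding $\Theta$-witnesses, which lie in $U$ by~(i), gives $\neg\,\bar\psi$, exactly as in Case~2 of the universal-quantifier step of the proof of Theorem~\ref{t: syn complet}.

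It is worth stressing what is \emph{not} used: the argument invokes only the basic axioms and rules of $\mathscr L_{\kappa,\lambda}$ — the infinitary conjunction axiom~(v) and conjunction rule, the quantifier axioms and generalization, and the equality facts underlying Lemma~\ref{l: commuting with uniqueness} — together with the added $(\Theta,U)$-rule; neither Chang's distributivity schema nor the choice rules ever appear, which is exactly why the conclusion survives into languages such as $\mathscr L_{\kappa^+,\kappa^+}$ where the base logic is badly incomplete. The only genuinely laborious part is the bookkeeping that keeps Lemma~\ref{l: commuting with uniqueness} applicable at every step: verifying the freeness side conditions so that the uniqueness prefixes can be commuted past the connectives and quantifiers without variable capture (renaming bound variables where needed), and carrying the $U$-relativization consistently through the recursion. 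I expect this, and not any conceptual difficulty, to be where the care is needed.
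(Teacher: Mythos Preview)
Your proposal is correct and follows essentially the same approach as the paper: induction on the build-up of~$\psi$, with Lemma~\ref{l: commuting with uniqueness} handling the commutation of the uniqueness prefix past each connective and quantifier block, and the same two-case split at the conjunction and universal-quantifier steps. Your Case~2 for the universal quantifier is in fact spelled out a bit more carefully than the paper's terse ``by the generalization rule'', and your remark that only the basic system (not distributivity or the choice rules) is used matches the paper's emphasis.
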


\begin{proof}
Clearly, it suffices to verify that 
for every formula $\psi(x_\beta)_{\beta<\delta}$ 
of $\mathscr L_{\kappa,\lambda}$ 
in $\delta<\lambda$ variables~$x_\beta$, we have  
$$
\T\vdash^{U}_\Theta\bar\psi^{\,U}
\;\text{ or }\;
\T\vdash^{U}_\Theta\neg\,\bar\psi^{\,U} 
$$
whenever $\bar\psi$ is the sentence 
$
\bigexists_{\beta<\delta}x_\beta\,
\bigl(
\bigwedge_{\beta<\delta}\theta_\beta(x_\beta)
\wedge
\psi(x_\beta)_{\beta<\delta}
\bigr)
$
for some $\delta$-sequence 
$(\theta_\beta)_{\beta<\delta}$ in~$\Theta$. 
Let us prove this by induction on construction of~$\psi$.

\begin{atomic}
By assumption (and since $\vdash^{U}_\Theta$ 
includes~$\vdash$).
\end{atomic}
              
\begin{negation}
Let $\psi$ be $\neg\,\varphi$.
Fix some~$\bar\psi$; then by 
Lemma~\ref{l: commuting with uniqueness}(i), 
in $(\T,\vdash)$, $\bar\psi$ is equivalent to 
$\neg\,\bar\varphi$ with the inherited~$\bar\varphi$. 
By induction hypothesis, the claim holds for~$\bar\varphi$, 
and then it trivially holds for $\neg\,\bar\varphi$, 
and so for~$\bar\psi$.
\end{negation}

\begin{conjunction}
Let $\psi$ be $\bigwedge_{\alpha<\gamma}\varphi_\alpha$.
Fix some~$\bar\psi$; then by 
Lemma~\ref{l: commuting with uniqueness}(ii), 
in $(\T,\vdash)$, $\bar\psi$ is equivalent to 
$\bigwedge_{\alpha<\gamma}\bar\varphi_\alpha$ with 
the inherited $\bar\varphi_\alpha$. 
We have two cases:

\begin{case1}
$\T\vdash^{U}_\Theta\bar\varphi_\alpha$ 
for all $\alpha<\gamma$. 
Then 
$
\T\vdash^{U}_\Theta
\bigwedge_{\alpha<\gamma}\bar\varphi_\alpha
$ 
by the conjunction rule, and so 
$\T\vdash^{U}_\Theta\bar\psi$ .
\end{case1}

\begin{case2}
$\T\not\vdash^{U}_\Theta\bar\varphi_\alpha$ 
for some $\alpha<\gamma$. Then 
$\T\vdash^{U}_\Theta\neg\,\bar\varphi_\alpha$ 
(for this~$\alpha$) by inductive hypothesis. 
Therefore, since 
$
\neg\,\bar\varphi_\alpha\to
\neg\bigwedge_{\alpha<\gamma}\bar\varphi_\alpha
$ 
is an instance of the conjunction axiom in contraposition,
$
\T\vdash^{U}_\Theta
\neg\bigwedge_{\alpha<\gamma}\bar\varphi_\alpha
$ 
by modus ponens, and so 
$\T\vdash^{U}_\Theta\neg\,\bar\psi$.
\end{case2}
\end{conjunction}

\begin{universal quantifier}
Let $\psi$ be 
$
\bigforall_{\alpha<\gamma}x_\alpha\,
\varphi(x_\alpha,y_\beta)_{\alpha<\gamma,\beta<\delta}.
$
Fix some~$\bar\psi$; then by 
Lemma~\ref{l: commuting with uniqueness}(iii), 
in $(\T,\vdash)$, $\bar\psi$ is equivalent to 
$
\bigforall_{\alpha<\gamma}x_\alpha\,
\chi(x_\alpha)_{\alpha<\gamma}
$
where $\chi$ is the formula 
$
\bigexists_{\beta<\delta}\!y_\beta\,
\bigl(
\bigwedge_{\beta<\delta}\theta_\beta(y_\beta)
\wedge
\varphi(x_\alpha,y_\beta)_{\alpha<\gamma,\beta<\delta}
\bigr)
$ 
with the inherited~$(\theta_\beta)_{\beta<\delta}$. 
Again we have two cases:

\begin{case1}
$\T\vdash^{U}_\Theta\bar\chi$ for all~$\bar\chi$.
Then 
$
\T\vdash^{U}_\Theta
\bigforall_{\alpha<\gamma}x_\alpha\,
\chi(x_\alpha)_{\alpha<\gamma}
$ 
by the $(\Theta,U)$-rule, 
and so $\T\vdash^{U}_\Theta\bar\psi$ .
\end{case1}

\begin{case2}
$\T\not\vdash^{U}_\Theta\bar\chi$ for some~$\bar\chi$.
Then $\T\vdash^{U}_\Theta\neg\,\bar\chi$ 
(for this~$\bar\chi$) by inductive hypothesis. 
Therefore, 
$
\T\vdash^{U}_\Theta\neg\,
\bigforall_{\alpha<\gamma}x_\alpha\,
\chi(x_\alpha)_{\alpha<\gamma}
$ 
by the generalization rule, 
and so $\T\vdash^{U}_\Theta\neg\,\bar\psi$.
\end{case2}
\end{universal quantifier}

The theorem is proved. 
\end{proof}


Noting that $(\Theta,U)$-rule preserves validity 
in every model in which the submodel definable by $U$ 
is pointwise definable by formulas in~$\Theta$, 
we get a~straightforward generalization of 
Theorem~\ref{t: sem complet gen} 
(or~\ref{t: sem complet}):

\begin{thm}[Semantic Completeness]%
\label{t: sem complet inf}
Let $\mathfrak A$ be a~model pointwise definable 
by formulas of $\mathscr L_{\kappa,\lambda}$ 
in $\Theta=\{\theta_a:a\in A\}$, 
$\T$~a~theory, $\vdash$~the standard system, 
and $U$~a~unary predicate symbol.
Assume that 
\begin{enumerate}
\item[(i)] 
$\T\vdash\forall x\,(\theta_a(x)\to U(x))$ 
for all $a\in A$, and 
\item[(ii)] 
$
\Dg^{U}_\Theta(\mathfrak A)\subseteq
\{\sigma^U:\T\vdash\sigma\}\subseteq\Th^U(\mathfrak A).
$
\end{enumerate}
Then 
$$
\{\sigma^U:\T\vdash^{U}_\Theta\sigma\}=\Th^U(\mathfrak A).
$$
\end{thm}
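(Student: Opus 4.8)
The plan is to mirror the proof of Theorem~\ref{t: sem complet gen}, using its infinitary syntactic counterpart Theorem~\ref{t: syn complet inf} in place of Theorem~\ref{t: syn complet}. First I would establish the inclusion $\{\sigma^U:\T\vdash^{U}_\Theta\sigma\}\subseteq\Th^U(\mathfrak A)$ by a soundness argument: every axiom and rule of the standard system is valid in all models, and by hypothesis $\{\sigma^U:\T\vdash\sigma\}\subseteq\Th^U(\mathfrak A)$, so it remains to check that the $(\Theta,U)$-rule preserves truth in~$\mathfrak A$. This is the observation flagged just before the statement: since $\mathfrak A$ restricted to~$U$ is pointwise definable by the formulas in~$\Theta$, if for every $\delta$-tuple $(\theta_\alpha)_{\alpha<\gamma}\in\Theta^\gamma$ the premise $\bigl(\bigexists_{\alpha<\gamma}x_\alpha\,(\bigwedge_{\alpha<\gamma}\theta_\alpha(x_\alpha)\wedge\varphi(x_\alpha)_{\alpha<\gamma})\bigr)^U$ holds in~$\mathfrak A$, then $\varphi$ holds of the unique tuple of elements of~$U^{\mathfrak A}$ picked out by those formulas; as every such tuple arises this way, $\mathfrak A\vDash(\bigforall_{\alpha<\gamma}x_\alpha\,\varphi)^U$. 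Hence $\T\vdash^{U}_\Theta\sigma$ implies $\mathfrak A\vDash\sigma^U$, i.e.\ $\mathfrak A\vDash\sigma$.

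Next I would obtain the reverse inclusion $\Th^U(\mathfrak A)\subseteq\{\sigma^U:\T\vdash^{U}_\Theta\sigma\}$ from completeness. Hypothesis~(ii) says in particular that $\Dg^{U}_\Theta(\mathfrak A)\subseteq\{\sigma^U:\T\vdash\sigma\}$, so for every $\Theta$-atomic sentence~$\varphi$ either $\varphi\in\Dg_\Theta(\mathfrak A)$ and $\T\vdash\varphi$, or $\neg\,\varphi\in\Dg_\Theta(\mathfrak A)$ and $\T\vdash\neg\,\varphi$; thus assumption~(ii) of Theorem~\ref{t: syn complet inf} is met, and assumption~(i) there is literally our~(i). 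By Theorem~\ref{t: syn complet inf}, $\{\sigma^U:\T\vdash^{U}_\Theta\sigma\}$ is a complete theory relativized to~$U$: for every~$\sigma$, either $\T\vdash^{U}_\Theta\sigma^U$ or $\T\vdash^{U}_\Theta\neg\,\sigma^U$. Combining this with the soundness direction: if $\mathfrak A\vDash\sigma$ then we cannot have $\T\vdash^{U}_\Theta\neg\,\sigma^U$ (that would force $\mathfrak A\vDash\neg\,\sigma$), so $\T\vdash^{U}_\Theta\sigma^U$, which is exactly $\sigma^U\in\{\sigma^U:\T\vdash^{U}_\Theta\sigma\}$. The two inclusions give the claimed equality.

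The only subtlety, and the step I expect to need the most care, is the soundness of the $(\Theta,U)$-rule, specifically making sure the ``pointwise definable by $\Theta$'' hypothesis is used correctly with infinitely many parameters: one must note that for a $\gamma$-tuple $(\theta_\alpha)_{\alpha<\gamma}$ of uniqueness formulas, the conjunction $\bigwedge_{\alpha<\gamma}\theta_\alpha(x_\alpha)$ is satisfied in $\mathfrak A$ (relativized to~$U$) by exactly one tuple $(a_\alpha)_{\alpha<\gamma}$ of elements of~$U^{\mathfrak A}$, and that since $\Theta$ defines \emph{all} elements of~$U^{\mathfrak A}$, ranging $(\theta_\alpha)_{\alpha<\gamma}$ over $\Theta^\gamma$ yields \emph{every} tuple from $U^{\mathfrak A}$. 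Here one invokes the relevant part of Lemma~\ref{l: commuting with uniqueness} to rewrite the premise as $\bigl(\bigforall_{\alpha<\gamma}x_\alpha\,(\bigwedge_{\alpha<\gamma}\theta_\alpha(x_\alpha)\to\varphi)\bigr)^U$ semantically, so that its validity in~$\mathfrak A$ really does say ``$\varphi$ holds of the $(\theta_\alpha)$-tuple.'' Everything else---soundness of the basic (indeed standard) axioms and rules of $\mathscr L_{\kappa,\lambda}$, and the bookkeeping with relativization superscripts---is routine, exactly as in the proof of Theorem~\ref{t: sem complet gen}, so I would simply say ``modify the proof of Theorem~\ref{t: sem complet gen}'' after spelling out the $(\Theta,U)$-rule soundness.
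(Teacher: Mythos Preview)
Your proposal is correct and follows essentially the same approach as the paper, which simply says ``Modify the proof of Theorem~\ref{t: sem complet}'': one inclusion by soundness of the $(\Theta,U)$-rule in models whose $U$-part is pointwise $\Theta$-definable (the observation noted just before the statement), the other by the infinitary syntactic completeness Theorem~\ref{t: syn complet inf}. Your elaboration of the soundness step for $\gamma$-tuples is more detailed than anything the paper spells out, but it is exactly the content the paper intends; the appeal to Lemma~\ref{l: commuting with uniqueness} there is not strictly needed (the argument is purely semantic), but it does no harm.
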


\begin{proof}
Modify the proof of Theorem~\ref{t: sem complet}.    
\end{proof}

\begin{rmk} 
Although we do not discuss here possible examples or 
applications, 
concerning analogs of the model~$L$ in infinitary and 
second-order languages, let us shortly mention the standard 
result in \cite{Chang}, \cite{Myhill Scott}, and the recent 
article~\cite{Kennedy etal}.
\end{rmk}


Now we consider a~special case of 
$\mathscr L_{\kappa,\lambda}$ in which an analog 
of above mentioned Barwise's result holds 
even in a~stronger form.

Benda proved \cite{Benda 1978} that 
$\kappa$~is supercompact iff the language 
$\mathscr L_{\kappa,\lambda}$ has a~kind of 
``compactness for omitting type'' in the following sense:
for every theory~$\T$ and type $\Gamma(x,y)$ 
in two variables, if each subtype 
$\Gamma_0\in\mathscr P_\kappa(\Gamma)$ has 
a~model of $\T$ that realizes $x$ and omits~$y$, 
then the whole~$\Gamma$ has such a~model.
Using this fact, we can generalize Barwise's 
completeness result to $\mathscr L_{\kappa,\lambda}$ 
with supercompact~$\kappa$.

\hide{
[In fact, the proof in~\cite{Benda 1978} gives 
the characterization of $\mu$-supercompact $\kappa$ 
via types of size~$\le\mu$. 
Since $\kappa$ is $\mu$-supercompact iff 
$\mathscr P_\kappa(\mu)$ carries a~fine normal 
ultrafilter, it trivially is $\mu$-strongly compact 
in the sense that $\mathscr P_\kappa(\mu)$ carries 
a~fine ultrafilter. However, below by ``$\kappa$~is 
$\mu$-strongly compact'' we mean that any 
$\kappa$-satisfiable theory of 
$\mathscr L_{\kappa,\lambda}(\tau)$ is satisfiable 
whenever $|\tau|\le\mu$, which may differ. Check or 
rewrite for the case when $\kappa$ is just supercompact!]

}

\begin{thm}[Strong Semantic Completeness]%
\label{t: Barwise for supercompact}
Let $\lambda\le\kappa\le\mu$, 
$\kappa$ a~$\mu$-supercompact cardinal, 
$\tau$~a~vocabulary containing a~unary predicate 
symbol~$U$, $|\tau|\le\mu$,
$\T$~a~theory of $\mathscr L_{\kappa,\lambda}(\tau)$, 
$\mathfrak A$~a~model of~$\tau$ pointwise definable by 
$\Theta$, and let $\vdash$~be the standard system.  
Assume that 
$\T\vdash\forall x\,(\theta(x)\to U(x))$ 
for all $\theta\in\Theta$, and 
$
\Dg^{U}_\Theta(\mathfrak A)\subseteq
\{\sigma^U:\T\vdash\sigma\}\subseteq\Th^U(\mathfrak A).
$ 
Then for every sentence~$\sigma$ of the language
$\mathscr L_{\kappa,\lambda}(\tau)$ we have:
$$
\T\vdash^{U}_\Theta\sigma
\;\;\text{iff}\;\;\;
\T\vDash^{U}_\Theta\sigma. 
$$
\end{thm}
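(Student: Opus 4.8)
The plan is to establish the two directions of the equivalence $\T\vdash^{U}_\Theta\sigma \iff \T\vDash^{U}_\Theta\sigma$ separately. The left-to-right direction (soundness) is routine: all axioms and rules of the standard system preserve validity, and the $(\Theta,U)$-rule preserves validity in every model in which the submodel defined by $U$ is pointwise definable by $\Theta$ — exactly the class of models relevant to $\T$, since $\T$ already proves $\forall x\,(\theta(x)\to U(x))$ for $\theta\in\Theta$ together with the generalized diagram $\Dg^U_\Theta(\mathfrak A)$, which pins down the $U$-part to be (isomorphic to) $\mathfrak A$. So I would dispatch this direction in one sentence, referring to the soundness discussion preceding Theorem~\ref{t: sem complet inf}.

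For the hard direction, right-to-left, the strategy is to take a sentence $\sigma$ with $\T\not\vdash^{U}_\Theta\sigma$ and produce a model of $\T$ in which $\sigma^U$ fails while the $U$-part is still pointwise definable by $\Theta$ (hence satisfies $\neg\sigma$ there), which witnesses $\T\not\vDash^{U}_\Theta\sigma$. First I would pass to the theory $\T' := \{\tau : \T\vdash^{U}_\Theta\tau\}$, so that $\T'$ is closed under the $(\Theta,U)$-rule, contains $\Dg^U_\Theta(\mathfrak A)$, and $\T'\not\vdash\sigma$ (ordinary deducibility now). By Theorem~\ref{t: syn complet inf}, the relativized theory $\{\sigma^U : \T\vdash^{U}_\Theta\sigma\}$ is complete relative to $U$, so in fact $\T'\vdash\neg\sigma^U$. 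Thus it suffices to build a single model $\mathfrak M\vDash\T'$ — the consistency of $\T'$ is the real content. Here the standard completeness results (items (v),(vi) of the excerpt) do not directly apply, because $\T'$ is not $\kappa$-consistent as a theory (it contains the restriction-rule conclusions which, taken $\kappa$-many at a time, need not be realizable). Instead, the trick is to recast consistency of $\T'$ as an omitting-types problem: a model of $\T$ (in the finitary-over-$\mathscr L_{\kappa,\lambda}$ sense) whose $U$-part is pointwise definable by $\Theta$ is precisely a model of $\T$ together with $\Dg^U_\Theta(\mathfrak A)$ that, for the parametrized type expressing ``$x\in U$ but $x$ satisfies no $\theta_a$'', realizes membership-in-$U$ while omitting that type — this is exactly the shape of the two-variable type $\Gamma(x,y)$ in Benda's characterization.

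So the core step is: apply Benda's theorem \cite{Benda 1978}. Let $\Gamma(x,y)$ be the type over $\T$ asserting $U(y)$ together with $\{\neg\theta_a(y) : a\in A\}$ (with the auxiliary $x$ carrying whatever is needed to keep the formulation two-variable, e.g. $x=x$), and more carefully, incorporate into $\T$ all the data — the generalized diagram, the axioms $\forall x\,(\theta_a(x)\to U(x))$, and the negation of the appropriate instance needed to refute $\sigma$. For each subtype $\Gamma_0\in\mathscr P_\kappa(\Gamma)$, only $<\kappa$ of the $\theta_a$ are involved; since $\T'$ is only closed under the full $(\Theta,U)$-rule and not under its $<\kappa$-ary fragments, and since by Theorem~\ref{t: sem complet inf} applied to $\mathfrak A$ itself we already have a model witnessing consistency of every small fragment, each $\Gamma_0$ has a model of $\T$ realizing $x$ and omitting $y$. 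Benda then yields a model for all of $\Gamma$, i.e. a model of $\T$ whose $U$-part omits the type $\{x\ne$ anything named by $\Theta\}$, hence is pointwise definable by $\Theta$; restricting to that part and using Corollary~\ref{c: sem complet gen}/Theorem~\ref{t: sem complet inf} gives a model of $\Th^U(\mathfrak A)\cup\{\neg\sigma^U\}$ that satisfies $\T$, completing the contrapositive.

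The main obstacle I anticipate is the precise bookkeeping in matching the two-variable type $\Gamma(x,y)$ of Benda's theorem to the omitting-type condition ``$U$-part is pointwise definable by $\Theta$'', and in verifying that the small-subtype hypothesis of Benda's theorem is met — i.e. that every $\Gamma_0\in\mathscr P_\kappa(\Gamma)$, which constrains only $<\kappa$ of the uniqueness formulas, genuinely has a model of $\T$ realizing $x$ and omitting $y$. This reduces to a $\kappa$-consistency / compactness statement for the fragment, which should follow from the standard completeness of $\mathscr L_{\kappa,\lambda}$ under supercompactness (supercompact implies strongly compact, items (v),(vi)) applied to the fragment theory, together with the fact that $\mathfrak A$ and its generalized diagram already furnish a concrete model of each fragment. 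Everything else — soundness, the reduction to consistency of $\T'$, the invocation of Theorems~\ref{t: syn complet inf} and~\ref{t: sem complet inf} — is routine.
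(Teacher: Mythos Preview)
Your overall strategy coincides with the paper's: soundness is dispatched routinely, and for the converse you argue by contrapositive, reduce to producing a model of $\T+\neg\sigma$ whose $U$-part is pointwise definable by~$\Theta$, recast this as omitting the $1$-type $\Gamma(x)=\{U(x)\wedge\neg\theta(x):\theta\in\Theta\}$, and invoke Benda's compactness-for-omitting-types characterization of supercompactness.

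Two points of divergence are worth flagging. First, your detour through the deductive closure $\T'=\{\tau:\T\vdash^{U}_\Theta\tau\}$ and Theorem~\ref{t: syn complet inf} is unnecessary and slightly misleading: syntactic completeness only decides $U$-relativized sentences $\sigma^U$, whereas the present theorem concerns arbitrary~$\sigma$, so the line ``it suffices to build a single model $\mathfrak M\vDash\T'$'' is not the correct reduction (you still need the $U$-part to be $\Theta$-definable, as you yourself note immediately afterward). The paper simply works with $\T+\neg\sigma$ throughout and never invokes Theorem~\ref{t: syn complet inf} here.

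Second, and more substantively, your primary verification of Benda's small-subtype hypothesis---``by Theorem~\ref{t: sem complet inf} applied to $\mathfrak A$ itself we already have a model witnessing consistency of every small fragment''---does not work: Theorem~\ref{t: sem complet inf} yields the relativized theory $\Th^U(\mathfrak A)$, not a model of the full unrelativized theory~$\T$, and nothing in the hypotheses says $\mathfrak A\vDash\T$. The paper's route is different: for each $\Theta_0\in\mathscr P_\kappa(\Theta)$ it introduces a fresh predicate~$U_0$ defined by $\bigvee_{\theta\in\Theta_0}\theta(x)$, passes to the definitional expansion, observes that $\neg\sigma$ remains consistent there under~$\vdash$, and then uses $\mu$-strong compactness (a consequence of $\mu$-supercompactness, items~(v)--(vi)) to obtain a model of the expansion together with~$\neg\sigma$ omitting the subtype~$\Gamma_0$. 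You do gesture at strong compactness in your final paragraph, and that---not the appeal to~$\mathfrak A$---is what actually carries the argument.
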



\begin{proof}
{\em Only if.}
Assume $\T\vdash^{U}_\Theta\sigma$. If 
$\mathfrak B\vDash\T$ and $\mathfrak A$ is 
the submodel of $\mathfrak B$ defined by~$U$, prove 
$\mathfrak B\vDash\sigma$ by induction on complexity 
of~$\sigma$ starting from $\Dg^{U}_\Theta(\mathfrak A)$
by using that the $(\Theta,U)$-rule preserves validity 
in such models~$\mathfrak B$. 

{\em If.}
Assume that for all models $\mathfrak B$ such that 
$\mathfrak B\vDash\T$ and $\mathfrak A$ is 
the submodel of $\mathfrak B$ defined by~$U$, 
we have $\mathfrak B\vDash\sigma$, and prove 
$\T\vdash^{U}_\Theta\sigma$. Thus in contraposition, 
assume that $\neg\,\sigma$ is consistent with 
$(\T,\vdash^{U}_\Theta)$ and prove that 
there exists a~model $\mathfrak B$ such that 
$\mathfrak B\vDash\T+\neg\,\sigma$ in which 
$U$ defines the submodel~$\mathfrak A$, thus 
$\mathfrak B$ omits the $1$-type 
$
\Gamma(x):=
\{U(x)\wedge\neg\,\theta(x):\theta\in\Theta\},
$
in other words, satisfies the sentence 
$
\forall x\,(U(x)\to 
\bigvee_{\theta\in\Theta}\theta(x))
$
of the language $\mathscr L_{\mu,\omega}$).

Let $\Theta_0\in\mathscr P_\kappa(\Theta)$,  
$U_0$~a~unary predicate symbol, and $\T'$ 
the expansion of $\T$ by the formula 
$
\forall x\,(U_0(x)\;\lra\;
\bigvee_{\theta\in\Theta_0}\theta(x)).
$
Then $\neg\,\sigma$ is consistent with 
$(\T',\vdash^{U}_\Theta)$ and a~fortiori with 
$(\T',\vdash)$. Therefore, since $\kappa$~is 
$\mu$-strongly compact, there exists a~model 
of $\T'+\neg\,\sigma$ omitting the type 
$
\Gamma_0(x):=
\{U(x)\wedge\neg\,\theta(x):\theta\in\Theta_0\}.
$
As $\Gamma_0$ is an arbitrary small subtype of~$\Gamma$, 
by the compactness for omitting type we conclude that 
there exists a~model of $\T'+\neg\,\sigma$ omitting 
the whole type $\Gamma$, as required. 
\end{proof}


\subsection*{Second-order logics}

A~second-order formula $\theta(X)$ in 
a~single $(n+1)$-ary relation variable~$X$ is 
a~{\em uniqueness formula} in $(\T,\vdash)$ iff 
$$
\T\;\vdash\;
\forall X\forall Y\,
(\theta(X)\wedge\theta(Y)\to
\forall x_0\ldots\forall x_n 
(X(x_0,\ldots,x_n)\,\lra\,Y(x_0,\ldots,x_n)),
$$
i.e., the system $(\T,\vdash)$ proves that the formula 
$\theta$ defines a~single $(n+1)$-ary relation. 
Clearly, this happens iff there exists 
a~(second-order) formula $\psi(x_0,\ldots,x_n)$ 
having first-order parameters $x_0,\ldots,x_n$ 
and no other parameters and such that
$$
\T\;\vdash\;
\forall X
(\theta(X)\,\lra\,
\forall x_0\ldots\forall x_n 
(X(x_0,\ldots,x_n)\,\lra\,\psi(x_0,\ldots,x_n))).
$$

Uniqueness formulas in function variables are 
defined similarly. 

\hide{
IT SEEMS, NO, AT LEAST LITERALLY...

This second-order notion clearly absorbs the previous 
first-order one: if $\theta(x)$ is a~first-order 
uniqueness formula, then $\theta'(X)$ defined by 
$\forall x\,(X(x)\,\lra\,\theta(x))$ is a~second-order 
uniqueness formula ``semantically equivalent'' to $\theta$ 
in an obvious sense. Hence, when formulating 
the second-order restriction rules, it suffices 
to use only second-order uniqueness formulas. 
}
To simplify the notation, below we formulate only 
the non-relativized version of the second-order 
restriction rules.

Given a~set $\Theta$ of uniqueness formulas in 
a~second-order variable $X$ of a~fixed arity~$n$, 
the {\em $\Theta$-rule of arity~$n$} is the schema 
consisting of the following rules:
$$
\frac
{\{\varphi(X)\wedge\theta(X):\theta\in\Theta\}}
{\forall X\,\varphi(X)}.
$$ 
By the {\em $\Theta$-rule}, we mean 
the union of these schemas for all finite~$n$. 

Second-order provability involves, besides the usual 
first-order axioms and rules, the comprehension axioms 
and the generalization rule for second-order quantifiers. 
Considering this, it is easy to obtain a~second-order 
syntactic completeness theorem along with a~proof 
essentially repeating that of Theorem~\ref{t: syn complet}, 
where second-order uniqueness formulas act like additional 
predicate symbols; we leave the exact formulation and 
the proof to the reader.

Regarding semantics, recall that along with 
the {\em full} semantics, where the second-order 
quantifiers involve all relations and functions, 
there is the {\em general} or {\em Henkin} semantics
which uses only relations and functions in a~given class
(see, e.g.,~\cite{Vaananen 1979}).  
Obviously, if the universe of a~model~$\mathfrak A$ has the 
cardinality~$\kappa$ while the set $\Theta$ of second-order 
uniqueness formulas the cardinality~$<2^{\kappa}$ 
(e.g., if both the model and the language are countable), 
we have no guarantee to deduce the second-order theory 
of $\mathfrak A$ in the full semantics by means of 
the $\Theta$-rule. 
The correct version of Lemma~\ref{l: S-soundness} is: 
{\em The second-order $\Theta$-rule preserves validity 
in every model in the Henkin semantics based on~$\Theta$.}
With this in mind, a~second-order semantic completeness 
theorem analogous to Theorem~\ref{t: sem complet} can be 
obtained in the similar way. We leave the exact formulation 
and its proof to the reader and give here only two examples 
related to arithmetic.
\hide{
In this way, we can deduce (highly non-effectively, of 
course) from the second-order arithmetic~$\mathrm{Z}_2$
the true second-order arithmetic~$\TA_2$. 
}

\subsection*{An example in arithmetic.}

Below $\Z_2$ denotes, as usual, the extension of 
$\PA$ (or of the Robinson arithmetic~$\mathrm{Q}$) by 
the second-order induction and comprehension schemas. 
The set is {\em analytical} iff it is definable by 
a~formula of~$\Z_2$. $\TA_2$ is {\em second-order 
true arithmetic}, i.e., the complete second-order 
theory of the standard model of arithmetic.

Let also $\tau$ be the vocabulary of arithmetic expanded 
by symbols for all relations and operations on~$\omega$, 
and let $\Z_{2}(\tau)$ be the theory in $\tau$ 
extending $\Z_2$ by all the atomic and negated 
atomic sentences true in~$\omega$, i.e., if $R_S$ is 
the relation symbols for $S\subseteq\omega^m$ and $t_n$ 
is the term for~$n\in\omega$, then $\Z_{2}(\tau)$ 
contains $R_S(t_{n_0},\ldots,t_{n_{m-1}})$ 
if $(n_0,\ldots,n_{m-1})\in S$, and  
$\neg\,R_S(t_{n_0},\ldots,t_{n_{m-1}})$ otherwise. 
Let $\TA_2(\tau)$ denote the complete second-order 
theory of the standard model of arithmetic in~$\tau$.
By {\em true theory of analytical sets} we mean 
the theory of second-order arithmetic with 
the Henkin semantic based on analytical sets.

\hide{
\begin{prop}
Let $\Theta$ consist of uniqueness formulas of 
$\Z_2$. Then 
$$
[\Z_2,\vdash_\Theta]=\text{ the true theory of 
analytical sets.}
$$
\end{prop}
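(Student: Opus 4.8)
The plan is to exhibit the stated identity as an instance of the second-order analogues of the Semantic and Syntactic Completeness Theorems (Theorems~\ref{t: sem complet} and~\ref{t: syn complet}), applied to the Henkin structure $\mathfrak A$ whose first-order universe is $\omega$ and whose second-order domain consists of exactly the analytical relations. By the characterization of uniqueness formulas preceding the proposition, a second-order $\theta(X)$ is a uniqueness formula of $\Z_2$ exactly when it asserts ``$X$ is the relation defined by $\psi_\theta$'' for a parameter-free $\psi_\theta$; hence $\Theta$ names precisely the members of the second-order part of $\mathfrak A$, so $\mathfrak A$ is pointwise definable (in the second-order sense) by $\Theta$, and the Henkin semantics based on analytical sets is exactly the one in which the $\Theta$-rule is sound. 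As the true theory of analytical sets is by definition $\Th(\mathfrak A)$, the task is to prove $[\Z_2,\vdash_\Theta]=\Th(\mathfrak A)$. Throughout I read $\Theta$ as containing also the first-order numeral-defining formulas $x=t_n$, so that $\vdash_\Theta$ subsumes the ordinary $\omega$-rule; without this the Henkin first-order part need not be standard and completeness already fails.

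The inclusion $[\Z_2,\vdash_\Theta]\subseteq\Th(\mathfrak A)$ is the soundness half. One verifies $\mathfrak A\vDash\Z_2$: induction holds because $\omega$ is standard, and comprehension holds because substituting the defining formulas of analytical parameters into any comprehension instance again yields an analytical relation. By the second-order form of Lemma~\ref{l: S-soundness} (the $\Theta$-rule preserves validity in every model whose $U$-part is pointwise definable by $\Theta$), every $\vdash_\Theta$-consequence of $\Z_2$ holds in $\mathfrak A$. For the converse I would, just as in Theorem~\ref{t: sem complet}, reduce to completeness of $[\Z_2,\vdash_\Theta]$ and run the induction of Theorem~\ref{t: syn complet} on the construction of a sentence, showing $\Z_2\vdash_\Theta\sigma$ or $\Z_2\vdash_\Theta\neg\,\sigma$. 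The Boolean steps are verbatim; a first-order $\forall x$ is discharged by the $\omega$-rule fragment of $\vdash_\Theta$; a second-order $\forall X$ is discharged by the second-order $\Theta$-rule after the premises are brought to $\Theta$-atomic form by the analogue of Lemma~\ref{l: commuting with uniqueness}.

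All the weight thus falls on the atomic base case, and this is the step I expect to be the main obstacle. A first-order atom is decided outright by $\Z_2$, but a second-order atom $X(\bar t)$, once $X$ is replaced by the unique relation satisfying some $\theta$, becomes $\Z_2$-equivalent to the sentence $\psi_\theta(\bar n)$; so the base case demands that $\vdash_\Theta$ decide every $\psi_\theta(\bar n)$ in agreement with $\mathfrak A$. One cannot cite the hypothesis of Theorem~\ref{t: syn complet}, since $\Z_2$ decides neither all true $\Pi^0_1$ sentences nor, a fortiori, these analytical sentences. The natural move is a nested induction on $\psi_\theta$ itself, first-order quantifiers again by the $\omega$-rule and second-order quantifiers again by the $\Theta$-rule; but unfolding an inner $\forall Y$ replaces $Y$ by some $\psi_{\theta'}$ of \emph{unbounded} quantifier complexity, and because these inner quantifiers range over \emph{all} analytical sets the $\Sigma^1_n$-level of the goal need not drop. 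Under the full, impredicative comprehension of $\Z_2$ an analytical $S_\theta$ may be defined by a $\psi_\theta$ quantifying over all sets---including $S_\theta$ itself---so no bottom-up well-founded ranking of the defining formulas is available, and a $\vdash_\Theta$-derivation, being a well-founded tree, cannot in general mirror the impredicative Tarskian recursion that computes $\psi_\theta(\bar n)$. This is exactly the point at which the set-theoretic analogue, Proposition~\ref{p: rel thy of L}, supplies the missing atomic facts as axioms via the generalized diagram $\Dg^{U}_\Theta$. To carry the proof through for $\Z_2$ itself one must instead produce a genuine well-founded ordinal ranking of the verification trees---for instance by stratifying $\Theta$ along a definability (admissibility) ordinal and closing the system under the $\Theta$-rule stage by stage, checking that each application at a given stage draws only on premises already decided earlier---and it is precisely this well-foundedness, under impredicativity, that is the delicate heart of the argument.
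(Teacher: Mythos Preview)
The paper offers no proof of this proposition: it is stated as an example of the (unstated, unproven) second-order semantic completeness theorem, which in turn is asserted to follow ``in the similar way'' to Theorem~\ref{t: sem complet}. Your analysis is strictly more careful than the paper's treatment, and the gap you have located is real.

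The analogue of Theorem~\ref{t: sem complet} requires that the base theory decide the $\Theta$-atomic sentences. In the second-order setting these are the sentences $\exists X(\theta(X)\wedge X(\bar t_n))$, which, as you observe, are $\Z_2$-equivalent to $\psi_\theta(\bar t_n)$ for an \emph{arbitrary} parameter-free second-order $\psi_\theta$. Plain $\Z_2$ does not decide these, so hypothesis~(ii) of the second-order Theorem~\ref{t: syn complet gen} fails, and the proposition is \emph{not} a direct instance of the paper's machinery. Your comparison with Proposition~\ref{p: rel thy of L} is exactly right: there the paper explicitly augments $\ZFC$ by the generalized diagram $\Dg^{U}_\Theta(L_\alpha)$ precisely to secure the atomic base case, and no such augmentation is made here. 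Your further point---that the paper's $\Theta$-rule is formulated only for second-order variables, so one must also read in the first-order $\omega$-rule to control the first-order part---is likewise a genuine omission in the paper's statement.

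Your proposed rescue via a nested induction on $\psi_\theta$ fails for exactly the reason you give: unfolding an inner $\forall Y$ by the $\Theta$-rule yields instances $\psi_{\theta'}$ of unbounded complexity, so there is no decreasing measure on which to induct. This is a real obstruction, not a technicality, and you should not expect to repair it by a more clever ranking alone. In short, you have not failed to find the paper's argument---there is none---and you have correctly diagnosed why the obvious argument does not go through. Whether the proposition as literally stated (with bare $\Z_2$, no diagram added) is even true is left open by both the paper and your analysis.
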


\begin{prop}
Let $\Theta$ consist of uniqueness formulas of 
$\Z_{2}(\tau)$. Then 
$$
[\Z_{2}(\tau),\vdash_\Theta]=\TA_2(\tau),
$$
and a fortiori, 
$\Z_{2}(\tau)\vdash_\Theta\TA_2$.
\end{prop}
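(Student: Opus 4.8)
The plan is to read this off from the second-order analogues of Theorem~\ref{t: syn complet} and Corollary~\ref{c: sem complet gen} announced above, taking for the model the standard model of arithmetic expanded to~$\tau$. Write $\mathfrak N_\tau$ for the structure obtained from $(\omega,0,s,+,\cdot,<)$ by interpreting each $R_S$ as $S$ and each operation symbol of $\tau$ by the corresponding operation on~$\omega$, so that $\Th(\mathfrak N_\tau)=\TA_2(\tau)$ in the full semantics. The decisive point is that $\mathfrak N_\tau$ is pointwise definable in the second-order sense: each $n\in\omega$ is defined by the numeral $t_n$ (that is, by $x=t_n$), and each $S\subseteq\omega^m$ is defined by the uniqueness formula $\theta_S(X):=\forall x_0\ldots\forall x_{m-1}\,(X(x_0,\ldots,x_{m-1})\lra R_S(x_0,\ldots,x_{m-1}))$, which is a uniqueness formula in $(\Z_2(\tau),\vdash)$ since comprehension supplies the defined relation and its very shape supplies uniqueness; operations are handled the same way, or coded by their graphs. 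Thus the full set $\Theta$ of uniqueness formulas of $\Z_2(\tau)$ collectively defines every element and every relation and operation of~$\mathfrak N_\tau$. Moreover $|\Theta|=2^{\aleph_0}$ is not below $|\mathcal P(\omega^{<\omega})|$, so no cardinality gap of the kind warned about above occurs; in fact $\Theta$ names every relation on~$\omega$, so over $\mathfrak N_\tau$ the Henkin semantics based on $\Theta$ is simply the full semantics, and the $\Theta$-rule preserves truth in $\mathfrak N_\tau$ under the full semantics.

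I would then verify the two inclusions the semantic completeness result needs, namely $\Dg_\Theta(\mathfrak N_\tau)\subseteq[\Z_2(\tau),\vdash]\subseteq\TA_2(\tau)$. The right-hand inclusion is soundness: $\Z_2$ holds in the standard model, the atomic and negated-atomic $\tau$-sentences thrown in to form $\Z_2(\tau)$ hold in $\mathfrak N_\tau$ by construction, and the second-order deductive system is sound for the full semantics. For the left-hand inclusion, take a $\Theta$-atomic sentence true in $\mathfrak N_\tau$; using comprehension to witness its uniqueness formulas and rewriting each $X(\cdots)$ as $R_S(\cdots)$ and each bound first-order variable as the pertinent numeral, one obtains — provably in $\Z_2(\tau)$ — a true atomic or negated-atomic $\tau$-sentence over numerals, and every such sentence is an axiom of $\Z_2(\tau)$. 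The same computation shows that $\Z_2(\tau)$ decides every $\Theta$-atomic sentence, which is precisely the hypothesis of the second-order syntactic completeness theorem.

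Invoking that theorem — whose proof is the induction on formula build-up of Theorem~\ref{t: syn complet}, with the first-order universal step handled by the first-order part of the restriction rule (so $\Theta$ is understood to contain the first-order uniqueness formulas $x=t_n$ too) and the second-order universal step by the second-order $\Theta$-rule, the second-order uniqueness formulas playing the role of extra predicate symbols — gives that $[\Z_2(\tau),\vdash_\Theta]$ is complete. Being complete and contained in the consistent theory $\TA_2(\tau)$, it must equal $\TA_2(\tau)$. Finally, every sentence of $\TA_2$ is a sentence of the sublanguage of pure second-order arithmetic true in $\mathfrak N_\tau$, hence lies in $\TA_2(\tau)=[\Z_2(\tau),\vdash_\Theta]$, which yields $\Z_2(\tau)\vdash_\Theta\TA_2$ a fortiori.

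The hard part is bookkeeping, not ideas, and sits in two places. First, one must pin down what a $\Theta$-atomic sentence is in the second-order setting and check that $\Z_2(\tau)$ genuinely decides each of them — that every substitution of uniqueness formulas for the second-order variables, and of numerals for the first-order variables, into an atomic formula collapses, provably in $\Z_2(\tau)$, to one of the diagram axioms; this is exactly where expanding to $\tau$ is indispensable, since in the pure language only countably many relations would be named and most of $\mathcal P(\omega)$ would escape $\Theta$, leaving one with a Henkin-style theory instead (which is the content of the companion statement about analytical sets). Second, one must remember that $\vdash_\Theta$ here has to combine the first-order restriction rule, applied to the uniqueness formulas $x=t_n$, with the second-order $\Theta$-rule, because the first-order universal quantifier step of the completeness induction is not subsumed by the second-order rule by itself.
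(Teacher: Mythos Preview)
Your proof is correct and follows exactly the approach the paper intends: the paper states this proposition without proof, as an illustrative example immediately after outlining the second-order analogues of Theorems~\ref{t: syn complet} and~\ref{t: sem complet} and explicitly leaving the details to the reader. Your verification that the expanded vocabulary $\tau$ supplies a uniqueness formula for every relation on $\omega$ (so the Henkin semantics based on $\Theta$ coincides with the full semantics over the standard model, which is the whole point of passing to $\Z_2(\tau)$), together with your care about combining the first- and second-order parts of the restriction rule, fills in precisely what the paper leaves implicit.
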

}

\medskip
\begin{prop}~
\begin{enumerate}
\item[(i)] 
If $\Theta$ consists of uniqueness formulas 
of~$\Z_2$, then $[\Z_2,\vdash_\Theta]$ is 
true theory of analytical sets. 
\item[(ii)] 
If $\Theta$ consists of uniqueness formulas 
of $\Z_{2}(\tau)$, then 
$[\Z_{2}(\tau),\vdash_\Theta]=\TA_2(\tau),$
and a fortiori, $\Z_{2}(\tau)\vdash_\Theta\TA_2$.
\end{enumerate}
\end{prop}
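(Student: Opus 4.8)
The plan is to apply the second-order analog of Theorem~\ref{t: sem complet} (the semantic completeness result), which the paper has just told the reader to reconstruct. Both parts are instances of that theorem with the model being the standard model~$\mathfrak N$ of arithmetic, the set~$\Theta$ consisting of uniqueness formulas defining the natural numbers, and the Henkin semantics based on~$\Theta$; the only difference between (i) and (ii) is whether the vocabulary carries extra symbols~$R_S$ for all subsets $S\subseteq\omega^m$, and correspondingly whether the base theory has been padded with a ``diagram'' of those symbols.

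For part~(i), first I would note that $\mathfrak N$ is pointwise definable in the language of arithmetic by second-order uniqueness formulas: each $n\in\omega$ is picked out by the formula $\theta_n(x)$ saying $x = t_n$, or equivalently by a second-order uniqueness formula in the sense defined above, and more to the point every \emph{set} that is analytical is, by definition, definable by a formula of~$\Z_2$, hence captured by a second-order uniqueness formula. So take $\Theta$ to be exactly the collection of uniqueness formulas of~$\Z_2$; these define precisely the analytical subsets (and the standard numbers as degenerate cases). Now check the hypotheses of the semantic completeness theorem with the Henkin structure whose second-order part is the class of analytical sets: $\Z_2$ is sound for this structure (its comprehension instances only assert existence of definable sets, which are analytical, so $[\Z_2,\vdash]$ is contained in the Henkin-theory of $(\mathfrak N,\text{analytical sets})$), and $\Z_2$ decides every $\Theta$-atomic sentence, since such a sentence asserts an atomic arithmetic fact about specific numerals and specific analytically-defined sets, which $\Z_2$ proves or refutes outright by comprehension plus the fact that $\Z_2 \vdash \varphi(t_{n_0},\ldots)$ or $\Z_2\vdash\neg\varphi(t_{n_0},\ldots)$ for atomic arithmetic $\varphi$. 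The corrected soundness lemma (``the second-order $\Theta$-rule preserves validity in every Henkin model based on~$\Theta$'') supplies the one inclusion, and the syntactic completeness theorem supplies completeness of $[\Z_2,\vdash_\Theta]$, so the two coincide with the true theory of analytical sets.

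For part~(ii) I would repeat the argument with $\tau$ the expanded vocabulary and $\Z_2(\tau)$ the base theory. Here the point is that every subset $S\subseteq\omega^m$ now has a term-definable name: the symbol $R_S$ itself, together with the axioms of $\Z_2(\tau)$ forcing $R_S(t_{n_0},\ldots,t_{n_{m-1}})\leftrightarrow (n_0,\ldots,n_{m-1})\in S$, makes ``$X=R_S$'' a uniqueness formula. Hence $\mathfrak N$ is, \emph{with full second-order semantics}, pointwise definable by formulas in the resulting~$\Theta$, because $\Theta$ now names \emph{all} relations on~$\omega$, not merely the analytical ones; and the diagram-like fragment $\Z_2(\tau)$ decides all $\Theta$-atomic sentences by construction. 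Applying the second-order semantic completeness theorem gives $[\Z_2(\tau),\vdash_\Theta] = \TA_2(\tau)$, the full second-order true arithmetic in~$\tau$; restricting to sentences in the original arithmetic vocabulary yields $\Z_2(\tau)\vdash_\Theta\TA_2$ a fortiori.

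I expect the only genuine subtlety — and it is the step I would write most carefully — is making precise, in part~(ii), why the $\Theta$-rule here captures the \emph{full} semantics rather than a Henkin restriction: this works exactly because $\Theta$ contains a uniqueness formula for every relation on the universe of~$\mathfrak N$, so the ``Henkin semantics based on~$\Theta$'' for the structure~$\mathfrak N$ coincides with the full semantics on that structure, and the corrected soundness lemma then says what we want. Everything else is a routine transcription of the proofs of Theorems~\ref{t: syn complet} and~\ref{t: sem complet}, with second-order uniqueness formulas and comprehension playing the bookkeeping roles that terms and atomic-decidability played in the first-order case; accordingly I would keep the write-up short and refer back to those proofs.
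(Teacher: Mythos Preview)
Your approach is exactly what the paper intends: the proposition is stated as an immediate application of the second-order semantic completeness theorem (itself left to the reader as an analog of Theorem~\ref{t: sem complet}), and you unpack precisely that application, correctly isolating the key point that in~(ii) the expanded vocabulary makes every relation on~$\omega$ nameable by a uniqueness formula so that the Henkin semantics based on~$\Theta$ collapses to the full semantics. The paper provides no proof beyond this implicit gesture, and your write-up matches it.
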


\hide{
\begin{q}
What is complexity of the ``second-order generalized 
diagram'' (which is essentially the set of all 
second-order formulas in one-parameter)? 
Is it less complex, in a~sense, than $\TA_2$?
\end{q}
}


\begin{rmk} 
Let us notice a~few things in this connection.

1. 
The generalization of Post's theorem to 
the analytical hierarchy shows that $\TA_2$ is not 
analytical, i.e., not definable by any single 
second-order formula of arithmetic.
\hide{
Simpson (1977) has shown that the true theory of 
second-order arithmetic is computably interpretable with 
the theory of the partial order of all Turing degrees, 
in the signature of partial orders, and vice versa.
[Moreover, the true theory of analytical sets 
is not analytical. Clarify!]
}

2. 
$\TA_2$ and even the theory of analytical sets depends on 
the meta-theory. E.g., the projective determinacy $\PD$ is  
expressible as a~schema in the language of~$\mathrm{Z}_{2}$; 
the continuum hypothesis $\CH$ also is expressible by 
a~second-order sentence, in fact, by a~$\Pi^{1}_{1}$-sentence 
in the empty vocabulary.

3. 
Even validity in second-order logic is not second-order 
definable over $\omega$ nor over any second-order 
characterizable structure (the latters include 
$\mathbb R$, $\omega_1$, the first inaccessible 
cardinal, etc.), see~\cite{Vaananen 2012}. 
This follows from the two facts: 
the set of (G{\"o}del's numbers of) valid second-order 
sentences is a~complete $\Pi_2$-set \cite{Tharp 1973}
while every class of models that is definable by 
a~second-order formula is a~$\Delta_2$-set 
\cite{Vaananen 1979} (where $\Pi_2$ and $\Delta_2$ 
relate to the L{\'e}vy hierarchy in $\ZFC$).

4. 
As shown in~\cite{Montague} (see also 
\cite{Vaananen 1979,Vaananen 2012}), 
third-order logic, as well as much higher-order 
logics, is expressible in second-order logic.

5. 
By \cite{Shoenfield}, the recursively restricted 
$\omega$-rule suffices to deduce $\TA$ from~$\PA$. 
Of course, the $\omega$-rule does not deduce $\TA_2$ 
from~$\Z_2$, and Shoenfield asked whether the recursively 
restricted $\omega$-rule deduces the same fragment of 
$\TA_2$ as the full $\omega$-rule; the affirmative 
answer was given in~\cite{Takahashi 1970}. 
\end{rmk}


Further extensions of this machinery to higher-order 
infinitary logics are possible.

\vskip+1em


We conclude this note with a~few more questions.

\begin{q} 
Find restriction rules suitable for arbitrary 
model-theoretic logics in the sense of Barwise 
(see~\cite{Barwise Feferman}).
\end{q}

\begin{q}
Find ``more effective'' but equally strong versions 
of the proposed restriction rules. E.g., what can be 
analogs of the constructive version of the $\omega$-rule 
suitable for getting $\Th(L)$ 
or $\TA_2$? 
\end{q}


\begin{q}
What are modal logics arising, in the way described above, 
in higher model-theoretic logics with restriction rules? 
\end{q}



\vskip+2em

\begin{footnotesize} 
\noindent
{\sc
The Russian Academy of Sciences, 
Steklov Mathematical Institute, 
Gubkina street~8, 
Moscow 119991 Russia
\/}
\\
{\it E-mail address:\/} 
d.i.saveliev@gmail.com 
\end{footnotesize}

\newpage

\section*{Appendix: an overview of earlier results}

Here we restate some results of 
\cite{Henkin 1954,Henkin 1957,Rosser 1937,
Orey 1956,Barwise 1975} 
in terms slightly closer to ours.

\subsection*{
On L.~Henkin's ``A~generalization of 
the concept of $\omega$-consistency'', 
\cite{Henkin 1954}.} 

Let $S$ be a~set of constants.

$\T$~is $S$-{\em consistent} iff there is no 
$\varphi(x)$ such that 
$\T\vdash\varphi(a)$ for all $a\in S$ and 
$\T\vdash\exists x\,\neg\,\varphi(x)$.

$\T$~is $S$-{\em satisfiable} iff there is  
a~model satisfying $\T$ and omitting the $1$-type 
$\{x\ne a:a\in S\}$ (i.e., the universe of which 
consists of interpretations of the constants in~$S$).

Facts: 
If $\T$~is $S$-satisfiable, then it is $S$-consistent 
(Theorem~1). 
If $\T$~is $S$-consistent and $|S|<\omega$, 
then it is $S$-satisfiable (Theorem~2). 
There are $\T$ and $S$ with $|S|=\omega$ such that 
$\T$~is $S$-consistent but not $S$-satisfiable 
(Theorem~3). 

$\T$~is $S^{(n)}$-{\em consistent} iff there is 
no $\varphi(x_0,\ldots,x_{n-1})$ such that 
$\T\vdash\varphi(a_0,\ldots,a_{n-1})$ for all 
$a_i\in S$ and 
$
\T\vdash\exists x_0\ldots\exists x_{n-1}
\,\neg\,\varphi(x_0,\ldots,x_{n-1})
$, 
and 
$S^{(\omega)}$-{\em consistent} iff it is 
$S^{(n)}$-consistent for all $n<\omega$.

Facts: 
For all $n<\omega$, there is $\T$ which is 
$S^{(n+1)}$- but not $S^{(n+2)}$-consistent 
(Theorem~4). 
If $\T$~is $S$-satisfiable, then 
it is $S^{(\omega)}$-consistent (Theorem~5). 
There are $\T$ and $S$ with $|S|=\omega$ such that 
$\T$~is $S^{(\omega)}$-consistent but not 
$S$-satisfiable (Theorem~6).

$\T$~is {\em strongly $S$-consistent} iff there is 
a~mapping of formulas in one variable into~$S$, 
say $\varphi\mapsto a_\varphi$, such that 
there is no $\varphi(x)$ such that 
$
\T\vdash
\bigvee_{i<n}
(\varphi_i(a_{\varphi_i})\wedge 
\exists x\,\neg\,\varphi_i(x)).
$

$\T$~is $S$-satisfiable iff 
it is strongly $S$-consistent (Theorem~7).

\subsection*{
On L.~Henkin's ``A~generalization of 
the concept of $\omega$-completeness'', 
\cite{Henkin 1957}.} 

As before, $S$~is a~fixed set of constants.

$\T$~is $S$-{\em complete} iff for all 
$\varphi(x)$, 
$\T\vdash\varphi(a)$ for all $a\in S$ implies 
$\T\vdash\forall x\,\varphi(x)$ 
(i.e., $[\T,\vdash]=[\T,\vdash_S]$, 
$\T$~is ``closed under $S$-rule'').
Notice: the $S^{(n)}$-completeness gives the same.

$\T$~is $S$-{\em saturated} iff 
$\T\vdash\sigma$ whenever $\sigma$~is true 
in all models that $S$-satisfy~$\T$. 
(Warning: do not confuse with saturated models!)

Facts: 
If $\T$~is $S$-saturated, then it is $S$-complete 
(Theorem~1). 
If $\T$~is $S$-complete and $|S|<\omega$, 
then it is $S$-saturated (Theorem~2). 
Moreover, in countable languages,
if $\T$~is $S$-complete and $|S|=\omega$, 
then it is $S$-saturated (Theorem~3). 
There are $\T$ and $S$ with $|S|>\omega$ such that 
$\T$~is $S$-complete but not $S$-saturated 
(Theorem~4); moreover, such a~$\T$ can be assumed 
to be $S$-satisfiable (Theorem~5).

$\T$~is {\em strongly $S$-complete} iff there is 
a~mapping of pairs $(\sigma,\varphi)$, where 
$\sigma$~is a~sentence and $\varphi(x)$~a~formula 
in one variable, into~$S$, 
say $(\sigma,\varphi)\mapsto a_{\sigma,\varphi}$, 
such that the set 
$
\Delta_\sigma:=
\{
(\sigma\wedge\exists x\,\varphi(x))\to
\varphi(a_{\sigma,\varphi}):\varphi(x)
\}
$ 
is consistent in~$\T$, for all~$\sigma$.

$\T$~is consistent and $S$-saturated iff 
it is strongly $S$-complete (Theorems 6 and~7).

\subsection*{
On S.~Orey's 
``On $\omega$-consistency and related properties'', 
\cite{Orey 1956},  
and related facts in B.~Rosser's 
``G{\"o}del theorems for non-constructive logics'', 
\cite{Rosser 1937}.
} 

\par
Theorem~2 of~\cite{Orey 1956} is essentially 
Theorem~3 of~\cite{Henkin 1954}.  

According to~\cite{Rosser 1937}, 
$\T$~is {\em $\omega(\alpha)$-consistent} iff 
it is closed under $\alpha$~applications of 
the $\omega$-rule. Therefore, 
$\T$~is $\omega(\alpha)$-consistent with 
$\alpha=\omega_1$ iff it is $\omega$-complete.

$\T$~is $\omega(\omega_1)$-consistent iff 
it is $\omega$-satisfiable (Theorems 3 and~4 
of~\cite{Orey 1956}, also this follows from 
Theorems 1 and~3 of~\cite{Henkin 1957}). 

(Note that in these statements, $\omega_1$ can 
be replaced with $\omega^{\CK}_1$.) 

\subsection*{
On related results in J.~Barwise's 
``Admissible sets and structures'',  
\cite{Barwise 1975}, Ch.~III,~\S3. 
}

Let $\mathfrak A$ be a~model of a~countable 
vocabulary~$\tau$, and let another vocabulary $\tau^+$ 
include $\tau$ and have a~unary predicate symbol~$U$ 
and constant symbols~$c_a$ for all $a\in A$. 
A~model~$\mathfrak A^+$ of $\tau^+$ is 
an {\em $\mathfrak A$-model} iff it interprets 
$U$ by~$A$, each $c_a$ by~$a$, and $\mathfrak A$ is 
a~submodel of the restriction of $\mathfrak A^+$ 
to~$\tau$.

Let $\T\vDash_{\mathfrak A}\varphi$ iff 
$\mathfrak A^+\vDash\varphi$ for all $\mathfrak A$-models 
$\mathfrak A^+$ that satisfy~$\T$, and let  
$\T\vdash_{\mathfrak A}\varphi$ iff $\varphi$ is 
deducible from $\T$ in the {\em $\mathfrak A$-logic}, 
the axioms of which consist of the usual logical axioms 
extended by $\{U(c_a):a\in A\}$ and $\Dg(\mathfrak A)$, 
and the rules are MP, Generalization, and 
the $S$-rule with $S:=\{c_a:a\in A\}$.

Facts: 
$\T\vdash_{\mathfrak A}\varphi(x_0,\ldots,x_{n-1})$ 
implies 
$
\T\vDash_{\mathfrak A}
\forall x_0\ldots\forall x_{n-1}\,
\varphi(x_0,\ldots,x_{n-1})
$
(i.e., the $\mathfrak A$-logic is sound). 
Moreover, if $|\tau^+|\le\omega$ (and so $|A|\le\omega$), 
then the converse implication holds as well, thus giving 
the {\em $\mathfrak A$-Completeness Theorem}:
$\T\vdash_{\mathfrak A}\varphi$ iff 
$\T\vDash_{\mathfrak A}\varphi$ for all 
sentences~$\varphi$ (Theorem~3.5).

\subsection*{
A characterization of $\omega$-consistency 
in C.~Smory{\'n}ski's 
``Self-reference and modal logic'', 1985, 
\cite{Smorynski 1985}.  
}

Let 
$\TA_{{\Pi_{2}^{0}}}$ be the $\Pi_{2}^{0}$-fragment 
of $\TA$, i.e., the set of all $\Pi^{0}_2$-sentences 
valid in the standard model of arithmetic, and 
${\RFN}_{\T}$ the uniform reflection principle for~$\T$, 
i.e., the schema of the axioms
$
\forall x\,(\Pr_{\T}
(\ulcorner\varphi({\dot x})\urcorner)\to\varphi(x))
$
for every formula $\varphi$ in one free variable. 

Fact: 
A~recursively axiomatizable theory $\T$ 
is $\omega$-consistent iff the theory 
$\T+\TA_{{\Pi_{2}^{0}}}+\RFN_\T$ is consistent. 
In particular, a~finitely axiomatizable theory $\T$ 
in the language of arithmetic is $\omega$-consistent 
iff $\T+\PA$ is $\Sigma_{2}^{0}$-sound.

\hide{

Cohen (1963) gave another construction of the minimal 
model as the strongly constructible sets, using 
a modified form of Gödel's constructible universe.

...

Paris: initiated the study of
DO models and showed that 
(1) every consistent extension 
T of ZF has a DO model, and
(2) for complete extensions~T, 
T has a unique DO model up to isomorphism 
iff T proves V = OD. 

Enayat:
1. If T is a consistent completion 
of $\ZF+V\ne OD$, then T has continuum-many 
countable nonisomorphic Paris models.
2. Every countable model of ZFC has 
a Paris generic extension.
3. If there is an uncountable well-founded 
model of ZFC, then for every infinite 
cardinal~$\kappa$ there is a Paris model of ZF 
of cardinality~$\kappa$ which has 
a nontrivial automorphism.
4. For a model $\mathfrak M\vDash\ZF$, 
$\mathfrak M$ is a~prime model implies that 
$\mathfrak M$ is a Paris model and satisfies~$\AC$,
which in turn implies that
$\mathfrak M$~is a~minimal model;
neither implication reverses assuming Con(ZFC).

Hamkins et al:

A pointwise definable model is one in which 
every object is definable without parameters.
In a model of set theory, this property strengthens 
V = HOD, but is not first-order expressible. 
Nevertheless, if ZFC is consistent, then there 
are continuum many pointwise definable models of ZFC. 
If there is a transitive model of ZFC, then 
there are continuum many pointwise definable 
transitive models of ZFC. 
What is more, every countable model of ZFC has 
a class forcing extension that is pointwise definable. 
Indeed, every countable model of Godel--Bernays 
set theory has a pointwise definable extension, 
in which every set and class is first-order definable 
without parameters.

It is well known that the first order arithmetic 
with the $\omega$-rule is complete. Moreover, 
J. R. Shoenfield [3] has shown that the same holds 
also when the $\omega$-rule is recursively restricted. 
On the other hand, the second-order arithmetic with 
the $\omega$-rule is not complete. So Shoenfield has 
raised a question whether every sentence of the 
second-order arithmetic provable with the $\omega$-rule 
is provable with the recursively restricted $\omega$-rule.
Takahashi [1] gave an affirmative answer.

[What is an easiest instance of a~true second-order 
sentence unprovable with the $\omega$-rule? Probably, 
the sentence expressing consistency of the logic with 
the $\omega$-rule (which is second-order formalizable).]

}

\end{document}